\documentclass[10pt,a4paper,oneside]{amsproc}

\usepackage{latexsym, amsthm}
\usepackage{amsfonts, amsmath, amssymb,comment}
\usepackage{euscript,mathrsfs}
\usepackage{url}
\usepackage{array}
\usepackage[a4paper]{geometry}
\usepackage{graphics}
\usepackage[usenames]{color}
\usepackage[all]{xy}
\usepackage{graphicx}
\usepackage{boxedminipage}

\newtheorem{theorem}{Theorem}[section]

\newtheorem{conjecture}{Conjecture}
\newtheorem{corollary}[theorem]{Corollary}
\newtheorem{question}[theorem]{Question}
\newtheorem{definition}[theorem]{Definition}
\newtheorem{remark}[theorem]{Remark}

\newtheorem{lemma}[theorem]{Lemma}

\newtheorem{proposition}[theorem]{Proposition}

\newcommand{\codim}{\mathrm{codim}}
\newcommand{\B}{\mathcal{B}}

\newcommand{\Fqt}{\mathbb{F}_{q^2}}
\newcommand{\Fq}{\mathbb{F}_q}

\newcommand{\Sing}{\mathrm{Sing}}

\newcommand{\C}{\mathcal{C}}

\def\Fq{{\mathbb F}_q}
\def\AA{{\mathbb A}}
\def\FF{{\mathbb F}}
\def\PP{{\mathbb P}} 
\newcommand{\V}{{\mathsf{V}}}

\newcommand{\X}{\mathcal{X}}

\begin{document}

\title[Intersection of Hermitian threefolds and cubic threefolds]{Maximum number of points on an intersection of a cubic threefold and a non-degenerate Hermitian threefold}

\author{Mrinmoy Datta}
\address{Department of Mathematics, \newline \indent
Indian Institute of Technology Hyderabad, Kandi, Sangareddy, Telangana, India}
\email{mrinmoy.datta@math.iith.ac.in}
\thanks{The first named author is partially supported by the grant DST/INT/RUS/RSF/P-41/2021 from the 
Department of Science and Technology, Govt. of India and the grant SRG/2021/001177 from Science and Engineering Research Board, Govt. of India}

\author{Subrata Manna}
\address{Department of Mathematics, \newline \indent
Indian Institute of Technology Hyderabad, Kandi, Sangareddy, Telangana, India}
\email{subrata147314@gmail.com}
\thanks{The second named author is partially supported by a doctoral fellowship from Council of Scientific and Industrial Research, Govt. of India}

\keywords{Hermitian threefolds, cubic threefolds, rational points, Edoukou's Conjecture}
\subjclass[2010]{Primary 14G05, 14G15, 05B25}

\begin{abstract}   
It was conjectured by Edoukou in 2008 that a non-degenerate Hermitian threefold in $\PP^4 (\Fqt)$ has at most $d(q^5+q^2) + q^3 + 1$ points in common with a threefold of degree $d$ defined over $\Fqt$. He proved the conjecture for $d=2$. In this paper, we show that the conjecture is true for $d = 3$ and $q \ge 7$. 
\end{abstract}

\date{}
\maketitle

\section{Introduction}

Hermitian varieties over finite fields are fascinating objects in the theory of algebraic geometry, finite geometry, coding theory, and combinatorics. Bose and Chakravarti first studied these varieties in \cite{BC}. The popularity of Hermitian varieties grew among coding theorists since the non-degenerate Hermitian varieties admit a very high number of rational points. Furthermore, Hermitian varieties come with fascinating point-line incidence structures, which are attractive to mathematicians working in finite geometry and combinatorics. From a coding theoretic perspective, the following question has been studied in \cite{SoT, E, E1, BD, BDH} among other articles. 

\begin{question}\label{q1}
    Let $V_{m-1}$ be a non-degenerate Hermitian variety in $\PP^m (\Fqt)$. Determine the maximum possible number of $\Fqt$-rational points common to a hypersurface of degree $d$ in $\PP^m$ defined over $\Fqt$ and $V_{m-1}$.
\end{question}

The case when $d=1$ is completely settled in \cite{BC, C}. When $m=2$, one could derive from using B\'{e}zout's Theorem that a curve of degree $d$ and a non-degenerate Hermitian curve intersect at a maximum of $d (q+1)$ many $\Fqt$-rational points. It is fairly easy to produce a plane curve of degree $d$ defined over $\Fqt$ that intersects the Hermitian curve at $d(q+1)$ points. However, the question becomes much more difficult as $m$ increases. The case $m=3$, to our knowledge, was first studied by S{\o}rensen, in his Ph.D. thesis \cite{SoT}. He conjectured that a non-degenerate Hermitian surface in $\PP^3(\Fqt)$ has at most $d (q^3 + q^2 - q) + q + 1$ many $\Fqt$-rational points in common with a surface in $\PP^3$ of degree $d$ defined over $\Fqt$. Even though the conjecture was formulated in the early 1990s, the first breakthrough was made by Edoukou in 2007. In \cite{E}, he proved that the conjecture is true for $d = 2$. S{\o}rensen's conjecture was finally proved in a series of two papers \cite{BD, BDH}. It is now natural to look at the case when $m=4$. Edoukou, in \cite{E1}, determined the answer to Question \ref{q1} for $d=2$ and $m = 4$. Besides,  Edoukou proposed the following conjecture: 

\begin{conjecture}\cite[Conjecture 1]{E1}\label{EdC}
Let $F \in \Fqt [x_0, \dots, x_4]$ be a homogeneous polynomial of degree $d$ and $V_3$ be a non-degenerate Hermitian threefold in $\PP^4 (\Fqt)$. If $d \le q$, then 
$$|V(F)(\Fqt) \cap V_3| \le d (q^5 + q^2) + q^3 + 1.$$
Moreover, this bound is attained by a threefold $V(F)$ in $\PP^4$, if and only if $V(F)$ is a union of $d$ non-tangent hyperplanes $\Sigma_1, \dots, \Sigma_d$ defined over $\Fqt$, such that $\Sigma_1, \dots, \Sigma_d$ contains a common plane $\Pi$ defined over $\Fqt$ and $\Pi$ intersects $V_3$ at a non-degenerate plane Hermitian curve. 
\end{conjecture}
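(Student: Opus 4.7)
The plan is to proceed by strong induction on $d$, with base cases $d=1$ (settled in \cite{BC, C}) and $d=2$ (proved in \cite{E1}). For the inductive step, assume the bound for all degrees $<d$, and split on whether $F$ admits a linear factor defined over $\Fqt$. A preliminary reduction replaces $F$ by its radical, since repeated factors can only decrease $|V(F)(\Fqt)\cap V_3|$.

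\textbf{Reducible case.} Suppose $F=L\cdot G$ with $L$ a linear form cutting out a hyperplane $H$ over $\Fqt$. Inclusion–exclusion yields
\[
|V(F)\cap V_3|=|H\cap V_3|+|V(G)\cap V_3|-|H\cap V(G)\cap V_3|.
\]
The first term is at most $q^5+q^3+q^2+1$, attained precisely when $H$ is non-tangent to $V_3$, and equals $q^5+q^2+1$ when $H$ is tangent. The second term is at most $(d-1)(q^5+q^2)+q^3+1$ by induction. The required bound $d(q^5+q^2)+q^3+1$ then reduces to
\[
|H\cap V(G)\cap V_3|\ \ge\ q^3+1
\]
in the non-tangent case (a weaker inequality, absorbed by the gap $q^3$ in $|H\cap V_3|$, suffices in the tangent case). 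Inside $H\cong\PP^3(\Fqt)$, $H\cap V_3$ is a non-degenerate Hermitian surface and $V(G)\cap H$ is a hypersurface of degree at most $d-1$. The lower bound will be established by decomposing $V(G)\cap H$ into irreducible components, exploiting the abundance of lines on the Hermitian surface and using the hypothesis $d\le q$ to rule out pathological Bezout behaviour along these generators. The extremal intersection in the conjecture contains exactly a non-degenerate Hermitian plane curve with $q^3+1$ points, so the inequality is tight.

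\textbf{Irreducible case.} Suppose $F$ has no linear factor over $\Fqt$. Set $N=|V(F)\cap V_3|$ and double count incidences $(P,H)$ with $P\in V(F)\cap V_3$ and $H$ a hyperplane through $P$:
\[
N\cdot(q^6+q^4+q^2+1)=\sum_{H}|V(F)\cap V_3\cap H|.
\]
For each non-tangent $H$, the resolved S{\o}rensen conjecture \cite{BD, BDH} bounds the summand by $d(q^3+q^2-q)+q+1$. For each tangent $H$ with apex $P$, $V_3\cap H$ is a Hermitian cone; projecting away from $P$ reduces the intersection of $V(F)$ with this cone to the intersection of a plane curve of degree $d$ with a non-degenerate Hermitian curve, yielding a bound of the same shape. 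Combining these with the counts $|V_3|=(q^5+1)(q^2+1)$ tangent hyperplanes and $(q^8+q^6+q^4+q^2+1)-(q^5+1)(q^2+1)$ non-tangent ones produces a linear inequality that forces $N\le d(q^5+q^2)+q^3+1$, with $d\le q$ used to keep the constants aligned.

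\textbf{Equality and main obstacle.} Tracing equality through the reducible case forces $V(G)$ to be inductively extremal (hence a union of $d-1$ non-tangent hyperplanes through a common plane $\Pi'$), forces $H$ to be non-tangent, and forces $H\cap V(G)\cap V_3$ to be a non-degenerate Hermitian curve in some plane $\Pi\subset H$; a short linear-algebra argument identifies $\Pi=\Pi'$, so $L$ joins the pencil. Equality in the irreducible case is shown to fail strictly, which in turn forces $F$ to factor off a linear form over $\Fqt$. The principal obstacle is the lower bound $|H\cap V(G)\cap V_3|\ge q^3+1$: S{\o}rensen's theorem provides a matching upper bound but no lower bound, and no such statement appears to be available in the literature in the degree of generality required. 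Proving it uniformly for all $d\le q$, rather than only for $d=3$ and large enough $q$ as in the present paper, is the step that stands between the current partial result and the full conjecture.
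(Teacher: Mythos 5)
Your proposal is an outline of a strategy for the full conjecture, but both of its main branches have gaps that are not merely technical; note that the paper itself only proves the case $d=3$, $q\ge 7$, and the general conjecture remains open. In the reducible case, the inequality you need, $|H\cap V(G)\cap V_3|\ge q^3+1$, is not just ``unavailable in the literature'' --- it is false as stated: if $V(G)$ is, say, an irreducible threefold containing no plane of $H$, the curve $V(G)\cap H\cap V_3$ can have far fewer than $q^3+1$ rational points over $\Fqt$. What saves the argument in such situations is that $|V(G)(\Fqt)\cap V_3|$ then falls short of the inductive maximum by much more than $q^3$, so the two terms in your inclusion--exclusion cannot be bounded independently; one needs a joint statement of the form ``either $V(G)$ is within $q^3$ of extremal, in which case its structure forces $|H\cap V(G)\cap V_3|\ge q^3+1$, or it is not.'' For $d=3$ the paper gets exactly this from Edoukou's classification of near-extremal quadrics (Proposition \ref{ed1} together with Lemma \ref{ed2}, used in Proposition \ref{reducible}); for general $d$ no such spectrum result is known, and your induction hypothesis (the bound plus the characterization of equality) is not strong enough to supply it.

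The irreducible case is worse: the double count over all hyperplanes through points of $V(F)\cap V_3$ does not ``force'' $N\le d(q^5+q^2)+q^3+1$. Carrying out the computation with the stated per-hyperplane bounds ($d(q^3+q^2-q)+q+1$ for the $q^8-q^7+q^6-q^5+q^4$ non-tangent hyperplanes and $d(q+1)q^2+1$ for the $(q^5+1)(q^2+1)$ tangent ones) and dividing by the $q^6+q^4+q^2+1$ hyperplanes through each point yields roughly $N\le dq^5+dq^4+\cdots$, which exceeds the target by about $dq^4$ for every $d\ge 2$. Averaging over hyperplanes is intrinsically too lossy, because only $O(q^2)$ of the $O(q^8)$ hyperplane sections can be anywhere near their individual maxima. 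The paper instead double counts over the $(q^5+1)(q^3+1)$ generators of $V_3$ (Proposition \ref{nogen}), which does give a bound of the correct order, namely $d(q^5+1)$ --- but only under the hypothesis that $V(F)$ contains no generator. When $V(F)$ does contain generators, the paper must descend into a case analysis according to how many generators a plane can share with $V(F)$ (Propositions \ref{plane}, \ref{dgen}, \ref{atmost1gen}, and, for $d=3$ and $q\ge7$ only, Proposition \ref{casetwo}), and it is precisely the intermediate cases of that analysis that remain open for $d\ge 4$. So neither branch of your induction closes.
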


As mentioned above, he proved Conjecture \ref{EdC} for $d=2$. The conjecture remained open to date. In this paper, we will prove the conjecture for $d = 3$ and $q \ge 7$. It is worth noting that Edoukou's proof of the conjecture for $d = 2$ depends on the well-known classification of quadrics in $\PP^4 (\Fqt)$. However, our approach is more algebro-geometric. We will in fact provide a proof of Edoukou's result using our method in Corollary \ref{Eprove}.

This paper is organized as follows. In Section \ref{sec:prel}, we recall various well-known properties of Hermitian varieties, and in particular non-degenerate Hermitian threefolds, revisit some preliminary notions from algebraic geometry and some bounds on the number of rational points on varieties defined over a finite field.  
In Section \ref{sec:singular} we take a step back to derive an upper bound for the number of $\Fqt$-rational points common to a degenerate Hermitian surface and a surface of degree $d$ in $\PP^3 (\Fqt)$.   In Section \ref{sec:gen} we present some results for general values of $d \le q$ towards proving Edoukou's conjecture. Finally, in Section \ref{sec:cubic}, we prove our main result.

\section{Preliminaries}\label{sec:prel}
Let us fix $q$, a prime power. As usual, $\Fq$ and $\Fqt$ denote the finite fields with $q$ and $q^2$ elements, respectively. For $m \ge 0$, we denote by $\PP^m$, the projective space of dimension $m$ over the algebraic closure $\overline{\FF}_q$, while $\PP^m (\Fqt)$ will denote the set of all $\Fqt$-rational points on $\PP^m$. 
Similarly, $\AA^m$ and $\AA^m (\Fqt)$ will denote the affine space of dimension $m$ over $\overline{\FF}_q$ and the set of all $\Fqt$-rational points of $\AA^m$ respectively.  
Further, for a homogeneous polynomial $F \in \Fqt[x_0, \dots , x_m]$, we denote by $V(F)$, the set of zeroes of $F$ in $\PP^m$ and by $\V(F)=V(F)(\Fqt)$ the set of all $\Fqt$-rational points of $V(F)$. By an algebraic variety, we mean a set of zeroes of a certain set of polynomials in the affine space or projective space, depending on the context. In particular, an algebraic variety need not be irreducible. We remark that whenever we say that a variety is nonsingular, we will mean that the variety is nonsingular over $\overline{\FF}_q$. However, when we say that a hypersurface $V(F)$ is irreducible, we will mean that the polynomial $F$ is irreducible in the field of its definition. This section is divided into three subsections: in the first subsection, we recall several known facts about Hermitian varieties over finite fields; the second subsection is dedicated to revisiting some useful notions from basic algebraic geometry, while the third subsection concerns some known bounds on the number of rational points on varieties defined over a finite field. None of the results mentioned in this section is new, and their proof can be found in the indicated references.

\subsection{Geometry of Hermitian threefolds over finite fields}\label{herm}

To begin with, let us revisit the definition and several properties of Hermitian varieties (cf. \cite{BC, C}) that will be used in the latter part of this paper. 

\begin{definition}[\cite{BC}] \normalfont
Let $A = (a_{ij})$ ($0 \le i, j \le m$) be an $(m+1) \times (m+1)$ matrix with entries in $\Fqt$. We denote by $A^{(q)}$ the matrix whose $(i, j)$-th entry is given by $a_{ij}^q$. The matrix $A$ is said to be a \textit{Hermitian matrix} if $A \neq 0$ and $A^T = A^{(q)}$. A \textit{Hermitian variety} of dimension $m-1$, denoted by $V_{m-1}$, is the set of zeroes of the polynomial $x^T A x^{(q)}$ inside $\PP^m$, where $A$ is an $(m+1) \times (m+1)$ Hermitian matrix and $x = (x_0, \dots, x_m)^T$. From a finite-geometric perspective, the authors of \cite{BC} denoted by $V_{m-1}$ the $\Fqt$-rational points on the variety given by the equation $x^T A x ^{(q)} = 0$. Strictly speaking, from an algebro-geometric point of view, one should denote this set by $V_{m-1}(\Fqt)$. But to maintain the classical notation, we will stick to the symbol $V_{m-1}$ to denote the set of all $\Fqt$-rational points on $V_{m-1}$. The Hermitian variety is said to be \textit{non-degenerate} if $\mathrm{rank} \ A = m+1$ and \textit{degenerate} otherwise. 
\end{definition}

It was established in \cite[Equation 5.6]{BC} that if the rank of a Hermitian matrix is $r$, then by a suitable change of coordinates over $\Fqt$, one can describe the corresponding Hermitian variety by the zero set of the polynomial 
\begin{equation}\label{herm}
x_0^{q+1} + x_1^{q+1} + \dots + x_{r-1}^{q+1} = 0.
\end{equation}
We note that the polynomial $x_0^{q+1} + \dots + x_{r-1}^{q+1}$ is irreducible over the algebraic closure of $\Fq$ whenever $r \ge 3$. 
Thus, a Hermitian variety of rank at least three is always absolutely irreducible, i.e. the variety is irreducible over $\overline{\FF}_q$. Moreover, if $V_{m-1}$ is non-degenerate, then it is non-singular. As mentioned in the Introduction, the linear sections of Hermitian varieties are rather well-understood thanks to \cite{BC, C}. The following result is a straightforward consequence of \cite[Theorem 7.4]{BC} and \cite[Theorem 3.1]{C}. 

 \begin{theorem}\label{hyperplane}
     Let $V_{m-1}$ be a non-degenerate Hermitian variety in $\PP^m$ and $\Sigma$ be a hyperplane of $\PP^{m}$ defined over $\Fqt$. 
     \begin{enumerate}
     \item[(a)] \cite[Theorem 7.4]{BC} if $\Sigma$ is tangent to $V_{m-1}$ at a point $P \in V_{m-1}$, then 
     $\Sigma \cap V_{m-1}$ is a cone over a non-degenerate Hermitian variety $V_{m-3}$ contained in a hyperplane of $\Sigma$ with center at $P$. 
     \item[(b)] \cite[Theorem 3.1]{C} \footnote{Strictly speaking, the result by Chakravarti is about the section of non-degenerate Hermitian varieties with hyperplanes that are polar hyperplanes at external points. However, it is not difficult to show that any non-tangent hyperplane can be realized as a polar hyperplane to the non-degenerate Hermitian variety at an external point.} if $\Sigma$ is not a tangent to $V_{m-1}$, then $\Sigma \cap V_{m-1}$ is a non-degenerate Hermitian variety $V_{m-2}$ in $\Sigma$.
     \end{enumerate}
 \end{theorem}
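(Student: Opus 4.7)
The plan is to prove both parts simultaneously via the sesquilinear pairing attached to the Hermitian matrix $A$. Write $H(x,y) = x^T A y^{(q)}$, so that $V_{m-1} = \{P \in \PP^m : H(P,P)=0\}$. Since $A$ is invertible, for every hyperplane $\Sigma \subset \PP^m$ defined over $\Fqt$ there is a unique point $w \in \PP^m(\Fqt)$, the \emph{pole} of $\Sigma$, such that $\Sigma = w^{\perp} := \{x : H(x,w) = 0\}$. A direct computation of the partial derivatives of $F(x) = H(x,x)$ (using that $\partial x_j^q / \partial x_i = 0$ in characteristic $p$) shows that the tangent hyperplane to $V_{m-1}$ at a point $P$ is exactly $P^{\perp}$. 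Consequently $\Sigma = w^{\perp}$ is tangent to $V_{m-1}$ if and only if $w \in V_{m-1}$, i.e.\ $H(w,w) = 0$, in which case the point of tangency is $P = w$.

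Next I would analyze the radical of the restriction $H|_{\Sigma \times \Sigma}$. A vector $u \in \Sigma$ lies in this radical precisely when $u \in \Sigma \cap \Sigma^{\perp} = w^{\perp} \cap \langle w \rangle$. Hence the radical is trivial when $w \notin w^{\perp}$ (non-tangent case) and is the line $\langle w \rangle$ when $w \in w^{\perp}$ (tangent case). This single observation drives both conclusions.

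For part (b), when $\Sigma$ is non-tangent the restricted form is a non-degenerate Hermitian form on the $m$-dimensional $\Fqt$-vector space underlying $\Sigma$; by \eqref{herm} a suitable $\Fqt$-linear change of coordinates on $\Sigma$ puts it in the normal form $y_0^{q+1} + \cdots + y_{m-1}^{q+1} = 0$, so $\Sigma \cap V_{m-1}$ is a non-degenerate Hermitian variety $V_{m-2}$ of $\Sigma$. For part (a), choose an $\Fqt$-linear complement $\Sigma'$ of $\langle P \rangle$ inside $\Sigma$; then $H$ restricted to $\Sigma'$ is non-degenerate (its radical would have to lie in $\langle P \rangle \cap \Sigma' = 0$), so $\Sigma' \cap V_{m-1}$ is a non-degenerate Hermitian $V_{m-3}$ by the same normal-form argument. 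Because $P$ lies in the radical of $H|_{\Sigma}$, for any $x \in \Sigma \cap V_{m-1}$ and any $\lambda, \mu \in \Fqt$ one has $H(\lambda x + \mu P,\lambda x + \mu P) = |\lambda|^{q+1} H(x,x) = 0$, so the whole line $\overline{Px}$ lies in $V_{m-1}$. This realizes $\Sigma \cap V_{m-1}$ as the cone over $\Sigma' \cap V_{m-1}$ with apex $P$.

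The main technical point, and the step I expect to require the most care, is the pole/tangent correspondence: verifying in characteristic $p$ that the projective tangent hyperplane to $V(H(x,x))$ at a smooth point $P$ really is the polar hyperplane $P^{\perp}$, and checking that the pole of a hyperplane defined over $\Fqt$ is itself $\Fqt$-rational (which follows from $A$ having entries in $\Fqt$ and the $q$-linearity of the second argument of $H$). Once this dictionary is in place, the rest of the argument is a clean radical/non-degeneracy computation applied together with the normal form \eqref{herm}.
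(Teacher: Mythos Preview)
Your argument is correct. The paper itself does not supply a proof of this theorem; it simply records the statement as a known consequence of \cite[Theorem~7.4]{BC} and \cite[Theorem~3.1]{C} and moves on. What you have written is precisely the classical sesquilinear-form argument underlying those references: identify the hyperplane with the polar $w^{\perp}$ of its pole, compute the radical of the restricted Hermitian form as $w^{\perp}\cap\langle w\rangle$, and read off the rank of the section. The footnote in the paper even hints at this, noting that Chakravarti's original formulation is in terms of polar hyperplanes at external points, which is exactly your non-tangent case $H(w,w)\neq 0$.

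Two small remarks on points you flagged yourself. First, your verification that the radical of $H|_{\Sigma'}$ is trivial is slightly terse: a vector $u\in\Sigma'$ in that radical satisfies $H(u,v)=0$ for all $v\in\Sigma'$, and since $u\in\Sigma=P^{\perp}$ one also has $H(u,P)=0$; hence $u$ lies in the radical of $H|_{\Sigma}=\langle P\rangle$, forcing $u=0$ by the complement condition. Second, the $\Fqt$-rationality of the pole: if $\Sigma$ is cut out by $b^{T}x=0$ with $b\in\Fqt^{m+1}$, you need $Aw^{(q)}$ proportional to $b$, and since Frobenius is a bijection on $\Fqt$ the resulting $w$ is indeed $\Fqt$-rational. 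Both steps are routine, but worth spelling out since you singled them out as the delicate part.
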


\noindent We now describe the possibilities arising out of a line intersecting a Hermitian variety. 

\begin{lemma}\cite[Section 7]{BC}\label{line}
Any line in $\PP^m (\Fqt)$ satisfies precisely one of the following.
\begin{enumerate}
\item[(i)] The line intersects $V_{m-1}$ at precisely $1$ point. 
\item[(ii)] The line intersects $V_{m-1}$ at precisely $q+1$ points. 
\item[(iii)] The line is contained in $V_{m-1}$. 
\end{enumerate}
\end{lemma}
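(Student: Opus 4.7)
The plan is to reduce the statement to the classification of Hermitian forms on $\PP^1(\Fqt)$. Given a line $L \subset \PP^m(\Fqt)$, I would pick two distinct $\Fqt$-rational points $P, Q$ on $L$ and parametrize $L$ as $\{[s:t] \mapsto sP + tQ : [s:t] \in \PP^1(\Fqt)\}$. Let $F(x) = x^T A x^{(q)}$ be the Hermitian polynomial defining $V_{m-1}$ and set
\[
a = P^T A P^{(q)}, \qquad b = Q^T A Q^{(q)}, \qquad c = P^T A Q^{(q)}.
\]
Using $A^T = A^{(q)}$ and that $P, Q$ have coordinates in $\Fqt$ (so $P^{(q^2)} = P$, $Q^{(q^2)} = Q$), a short computation shows that $a, b \in \Fq$ and that $Q^T A P^{(q)} = c^q$. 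Consequently
\[
F(sP + tQ) = a\, s^{q+1} + c\, s t^q + c^q\, s^q t + b\, t^{q+1},
\]
which is precisely the Hermitian form on $\PP^1(\Fqt)$ associated with the Hermitian matrix $\left(\begin{smallmatrix} a & c \\ c^q & b \end{smallmatrix}\right)$.

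Next I would split into cases according to the rank $r \in \{0, 1, 2\}$ of this $2 \times 2$ Hermitian matrix. If $r = 0$, then $F$ vanishes identically on $L$, which is case (iii). If $r \geq 1$, I would invoke the normal form recalled in equation~\eqref{herm}: after an $\Fqt$-linear change of coordinates on $L$, the restricted form becomes $y_0^{q+1}$ when $r = 1$ and $y_0^{q+1} + y_1^{q+1}$ when $r = 2$. The former clearly has a unique zero $[0:1] \in \PP^1(\Fqt)$, yielding case (i).

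For $r = 2$, zeros of $y_0^{q+1} + y_1^{q+1}$ in $\PP^1(\Fqt)$ correspond, after dehomogenising, to solutions of $u^{q+1} = -1$ in $\Fqt$. Since the norm map $N_{\Fqt/\Fq} : \Fqt^\times \to \Fq^\times$, $u \mapsto u^{q+1}$, is a surjective group homomorphism with kernel of size $q+1$, the equation $u^{q+1} = -1$ has exactly $q+1$ solutions, giving case (ii).

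The argument is essentially routine linear algebra combined with the classification in equation~\eqref{herm}; the only mildly delicate step is the verification that the restriction of $F$ to $L$ is again a Hermitian form (i.e.\ that the diagonal entries $a, b$ lie in $\Fq$ and the off-diagonal entries are Frobenius-conjugate). The trichotomy $\{1, q+1, q^2+1\}$ of possible intersection sizes then reflects exactly the three possible ranks of a binary Hermitian matrix over $\Fqt$.
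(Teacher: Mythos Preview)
Your argument is correct and is essentially the one given in the cited reference \cite[Section~7]{BC}: the restriction of a Hermitian form to a line defined over $\Fqt$ is again a Hermitian form, and the three cases correspond exactly to ranks $0,1,2$ of the resulting $2\times 2$ Hermitian matrix via the normal form~\eqref{herm}. The paper itself does not supply an independent proof but simply appeals to this reference, so your write-up matches it.
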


Let us summarize the known facts regarding the linear sections of non-degenerate Hermitian threefolds in $\PP^4$. From now onwards, we will denote by $V_3$ a non-degenerate Hermitian threefold in $\PP^4$. 
It is well-known that a non-degenerate Hermitian threefold does not contain a plane. One can derive this from \cite[Theorem 9.1]{BC} and also from a much stronger result \cite[Lemma 2.1]{HK}.  We have the following definitions thanks to Lemma \ref{line}.
 
\begin{definition}\label{lines} \normalfont
Let $\ell$ be a line in $\PP^4(\Fqt)$.  The line $\ell$ is called 
\begin{enumerate}
\item[(a)]  a \textit{tangent line} if it intersects $V_3$ at exactly $1$ point. 
\item[(b)]  a \textit{secant line} if it intersects $V_3$ at exactly $q+1$ points. 
\item[(c)] a \textit{generator} if it is contained in $V_3$. 
\end{enumerate}
\end{definition}

\begin{remark}\label{rem0}\normalfont
 In particular, if $\Sigma$ is a hyperplane in $\PP^4$ defined over $\Fqt$, and $V_3$ denotes the non-degenerate Hermitian threefold in $\PP^4$, then the following two possibilities occur:
\begin{enumerate}
    \item[(a)] if $\Sigma$ is not a tangent to $V_3$, then $\Sigma \cap V_3$ is a non-degenerate Hermitian surface  in $\Sigma$.
    \item[(b)] if $\Sigma$ is a tangent to $V_3$ at $P \in V_3$, then $\Sigma \cap V_3$ is a cone over a nonsingular plane Hermitian curve $V_1$ with center at $P$. If $V_1$ is contained in a plane $\Pi \subset \Sigma$, then it is understood that $P \not\in \Pi$. 
 \end{enumerate}   
    
\noindent  Since any line $\ell$ with $P \in \ell \subseteq V_3$,  is contained in $T_P(V_3)$, the tangent hyperplane to $V_3$ at $P$, it follows from part (b) that the non-degenerate Hermitian threefold contains $q^3 + 1$ lines defined over $\Fqt$ passing through $P$. 
  Also as a consequence of \cite[Theorem 9.2]{BC}, the non-degenerate Hermitian threefold $V_3$ in $\PP^4$ contains exactly $(q^5 + 1)(q^3 + 1)$ lines defined over $\Fqt$.  
\end{remark}
From \cite[Section 7]{BC}, we know that the intersection of a Hermitian variety with a linear subspace is a Hermitian variety. In particular, if $\Pi$ is a plane in $\PP^4$ defined over $\Fqt$, then  $V_3 \cap \Pi$ is also a Hermitian variety. There are three possibilities :

\begin{enumerate}
    \item[(a)] \textit{$\Pi \cap V_3$ is a non-degenerate Hermitian curve:} in this case, $|\Pi \cap V_3| = q^3 + 1$.
    \item[(b)] \textit{$\Pi \cap V_3$ is a union of $q+1$ lines through a common point:} in this case $|\Pi \cap V_3| = q^3 +q^2 + 1$.
    \item[(c)] \textit{$\Pi \cap V_3$ is a line:} in this case $|\Pi \cap V_3| = q^2 + 1$.
\end{enumerate}

\noindent In particular, if $\Pi$ is a plane defined over $\Fqt$ that is contained in the tangent hyperplane $T_P (V)$ to $V_3$ at a point $P \in V_3$ and $P \not\in \Pi$, then $\Pi \cap V_3$ is a non-degenerate Hermitian curve. 

For a fixed line $\ell$ in $\PP^4$ defined over $\Fqt$,  We will denote by $\B (\ell)$, the \textit{book of planes containing $\ell$}, that is, $$\B (\ell) := \{ \Pi : \ell \subset \Pi, \Pi \ \text{is a plane in} \ \PP^4 \ \text{defined over } \ \Fqt\}.$$ It is easy to see that $|\B(\ell)| = q^4 + q^2 + 1$. Moreover, if $\Sigma$ is a hyperplane in $\PP^4$ defined over $\Fqt$ containing $\Pi$, we denote by $\B_{\Sigma} (\ell)$ the \textit{book of planes in $\Sigma$ containing $\ell$}, that is, $$\B_{\Sigma} (\ell) := \{ \Pi \in \B(\ell):  \Pi \subset \Sigma\}.$$ It is fairly easy to show that $|\B_{\Sigma} (\ell)| = q^2 + 1$. 

If $P, Q \in V_3$ are distinct points and $\ell$ is a generator of $V_3$ through $P$ and $Q$, then the tangent spaces $T_P (V_3)$ and $T_Q(V_3)$ intersect at a plane $\Pi$ such that $\Pi \cap V_3 = \ell$. Also, if a plane $\Pi$ intersects $V_3$ at $q+1$ lines through $P$, then $\Pi \subset T_P (V_3)$. 

As mentioned in the Introduction, Edoukou \cite{E1} dealt with the sections of quadric threefolds and non-degenerate Hermitian threefolds. We record his results here for future references.

\begin{theorem}\cite[Theorem 4.1]{E1}\label{EdTQ}
    Let $\mathcal{Q}$ be a quadric threefold in $\PP^4$ defined over $\Fqt$. Then $|Q(\Fqt) \cap V_3| \le 2q^5 + q^3 + 2q^2 + 1$.
\end{theorem}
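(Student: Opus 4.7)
The plan is to distinguish cases according to the reducibility structure of $\mathcal{Q}$, combining Theorem \ref{hyperplane}, Remark \ref{rem0}, and Edoukou's bound from \cite{E} for quadric surfaces intersecting a non-degenerate Hermitian surface in $\PP^3(\Fqt)$. First, I would suppose $\mathcal{Q} = \Sigma_1 \cup \Sigma_2$ is a union of two $\Fqt$-rational hyperplanes (possibly coinciding), set $\Pi := \Sigma_1 \cap \Sigma_2$, and apply inclusion-exclusion to get
\[
|\mathcal{Q}(\Fqt) \cap V_3| \; = \; |\Sigma_1 \cap V_3| + |\Sigma_2 \cap V_3| - |\Pi \cap V_3|.
\]
By Remark \ref{rem0}, each $|\Sigma_i \cap V_3|$ equals $q^5+q^3+q^2+1$ for a non-tangent $\Sigma_i$ and $q^5+q^2+1$ for a tangent one, while $|\Pi \cap V_3| \in \{q^3+1,\,q^3+q^2+1,\,q^2+1\}$ according to whether the restriction of the Hermitian form to $\Pi$ has rank $3$, $2$, or $1$. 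A short case check then shows that the maximum $2q^5+q^3+2q^2+1$ is realised precisely when both $\Sigma_i$ are non-tangent and $\Pi \cap V_3$ is a non-degenerate Hermitian curve. The delicate sub-case is when $\Pi \cap V_3$ reduces to a single line: I would verify that no non-tangent hyperplane can then contain $\Pi$, since otherwise $\Pi$ would sit inside a non-degenerate Hermitian surface and cut out a rank-$1$ plane section, which is impossible because plane sections of a non-degenerate Hermitian surface in $\PP^3$ have rank $2$ or $3$; hence both $\Sigma_i$ are forced to be tangent and the count stays well below the target.

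Next, if $\mathcal{Q}$ is $\Fqt$-irreducible but splits over a quadratic extension as $\Sigma_1 \cup \Sigma_1^\sigma$ for a pair of Galois-conjugate hyperplanes, then every $\Fqt$-rational point of $\mathcal{Q}$ lies on the $\Fqt$-plane $\Sigma_1 \cap \Sigma_1^\sigma$, yielding $|\mathcal{Q}(\Fqt) \cap V_3| \le q^3+q^2+1$, comfortably within the bound.

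The main case remaining is $\mathcal{Q}$ absolutely irreducible, that is, of rank $3$, $4$, or $5$. Here the plan is to slice by the pencil of hyperplanes through a carefully chosen $\Fqt$-rational plane $\Pi$, using the elementary identity
\[
|\mathcal{Q} \cap V_3| \; = \; \sum_{\Sigma \supset \Pi} |\mathcal{Q} \cap V_3 \cap \Sigma| \; - \; q^2\, |\mathcal{Q} \cap V_3 \cap \Pi|,
\]
the sum running over the $q^2+1$ hyperplanes containing $\Pi$. For each non-tangent $\Sigma$ in the pencil, $V_3 \cap \Sigma$ is a non-degenerate Hermitian surface in $\Sigma \cong \PP^3$ and Edoukou's theorem in \cite{E} gives $|\mathcal{Q} \cap V_3 \cap \Sigma| \le 2q^3+2q^2-q+1$; tangent hyperplanes would be handled via the analysis of degenerate Hermitian surfaces developed in Section \ref{sec:singular}. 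The main obstacle is that a generic $\Pi$ only yields a bound of order $2q^5+2q^4+O(q^3)$, too weak by a factor of $q$. The decisive refinement is to pick $\Pi$ adapted to the singular structure of $\mathcal{Q}$ --- a plane through the vertex line in the rank-$3$ case, a plane through the vertex point in the rank-$4$ case, and a plane on which $\mathcal{Q}$ becomes reducible in the rank-$5$ case --- so that enough hyperplanes in the pencil contribute strictly less than the S{\o}rensen maximum while $|\mathcal{Q} \cap V_3 \cap \Pi|$ is large enough to absorb the remaining slack.
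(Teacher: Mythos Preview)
Your treatment of the reducible case is correct and essentially coincides with the paper's argument (Corollary~\ref{Eprove}, Case~1), which streamlines the case-check on $\Pi$ via Lemma~\ref{ed2}. The sub-case of a Galois-conjugate pair of hyperplanes is also fine; the paper absorbs it into the irreducible case without comment.

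For the absolutely irreducible case your route diverges from the paper and carries a genuine gap. You propose to slice by the pencil of hyperplanes through a plane $\Pi$, bound each slice via Edoukou's $\PP^3$ result from \cite{E}, and then choose $\Pi$ adapted to the singular structure of $\mathcal{Q}$. As you yourself note, a generic $\Pi$ only yields $2q^5+2q^4+O(q^3)$, and the ``decisive refinement'' is left unspecified. The difficulty is sharpest for rank-$5$ (smooth) quadrics: since these contain no plane, one cannot take $\Pi\subset\mathcal{Q}$; to get $|\mathcal{Q}\cap V_3\cap\Pi|\gtrsim 2q^2$ you would need $\mathcal{Q}\cap\Pi$ to consist of two generators of $V_3$, and you have not argued that such a $\Pi$ always exists. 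Even granting that, the bound still requires controlling how many of the $q^2+1$ hyperplanes cut $\mathcal{Q}$ in a reducible quadric surface (so that the Edoukou maximum is actually attained) versus an irreducible one, and none of this accounting is carried out.

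The paper's own proof (Corollary~\ref{Eprove}, Case~2) sidesteps these issues entirely: it never invokes the rank of $\mathcal{Q}$, nor the $\PP^3$ result of \cite{E}. Instead it specialises the general degree-$d$ machinery of Section~\ref{sec:gen} to $d=2$. If $\mathcal{Q}$ contains no generator of $V_3$, the incidence count with the $(q^5+1)(q^3+1)$ generators (Proposition~\ref{nogen}) gives $|\mathcal{Q}(\Fqt)\cap V_3|\le 2(q^5+1)$. If $\mathcal{Q}$ contains a plane but no hyperplane, Proposition~\ref{plane} applies. If $\mathcal{Q}$ contains a generator but no plane, one checks whether some plane meets $\mathcal{Q}$ in two generators (Proposition~\ref{dgen}) or every plane meets it in at most one (Proposition~\ref{atmost1gen}); in either case the resulting bound is strictly below $2(q^5+q^2)+q^3+1$. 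All four sub-bounds are strict, so equality forces $\mathcal{Q}$ to be a union of two hyperplanes, recovering the characterisation.
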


Besides proving the above-mentioned theorem, Edoukou also provided the five highest possible number of points of intersection between a quadric and a non-degenerate Hermitian threefold. We present an excerpt of this result \cite[Theorem 4.4]{E1} below. For future reference, we first introduce the following definition.

\begin{definition}\normalfont
Let $V_3$ be a non-degenerate Hermitian threefold in $\PP^4$. A quadric threefold $Q$ in $\PP^4$ defined over $\Fqt$ is said to be of 
\begin{enumerate}
    \item[(a)] \textbf{Type I} with respect to $V_3$, if $Q$ is a union of two distinct non-tangent hyperplanes $\Sigma_1$ and $\Sigma_2$ such that $\Sigma_1\cap\Sigma_2$ intersects $V_3$ at a nonsingular Hermitian plane curve.
    \item[(b)] \textbf{Type II} with respect to $V_3$, if $Q$ is a union of two distinct non-tangent hyperplanes $\Sigma_1$ and $\Sigma_2$ such that $\Sigma_1 \cap \Sigma_2$ intersects $V_3$ at a union of $q+1$ lines passing through a common point.
    \item[(c)] \textbf{Type III} with respect to $V_3$, if $Q$ is a union of two hyperplanes $\Sigma_1$ and $\Sigma_2$, where $\Sigma_1$ is a tangent to $V_3$ and $\Sigma_2$ is a non-tangent and $\Sigma_1 \cap \Sigma_2$ intersects $V_3$ at a nonsingular plane Hermitian curve. 
\end{enumerate}
\end{definition}
In addition to Theorem \ref{EdTQ}, Edoukou proved the following result.     
\begin{proposition}\cite[Theorem 4.4]{E1}\label{ed1}
Let $Q$ be a quadric threefold in $\PP^4$ defined over $\Fqt$. Then
\begin{equation*}
|Q(\Fqt) \cap V_3| 
\begin{cases}
= 2(q^5 + q^2) + q^3 + 1, \ & \mathrm{if} \ $Q$ \ \mathrm{is \ of \ Type \ I} \\
= 2q^5 + q^3 + q^2 + 1, \ & \mathrm{if} \ $Q$ \ \mathrm{is \ of \ Type \ II} \\
= 2q^5 +  2q^2 + 1, \  & \mathrm{if} \ $Q$ \ \mathrm{is \ of \ Type \ III} \\\
\le 2q^5 + q^2  + 1, \  & \mathrm{otherwise}.
\end{cases}
\end{equation*}
    \end{proposition}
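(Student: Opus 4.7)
The plan is to analyze $Q$ according to whether it is reducible or absolutely irreducible over $\Fqt$. In the reducible case, write $Q = \Sigma_1 \cup \Sigma_2$ with $\Sigma_1, \Sigma_2$ hyperplanes defined over $\Fqt$, and set $\Pi := \Sigma_1 \cap \Sigma_2$. Inclusion--exclusion gives
$$|Q(\Fqt) \cap V_3| = |\Sigma_1 \cap V_3| + |\Sigma_2 \cap V_3| - |\Pi \cap V_3|.$$
Theorem \ref{hyperplane} yields $|\Sigma_i \cap V_3| \in \{q^5+q^3+q^2+1,\; q^5+q^2+1\}$ according as $\Sigma_i$ is non-tangent or tangent, and the trichotomy for plane sections of $V_3$ listed above gives $|\Pi \cap V_3| \in \{q^3+1,\; q^3+q^2+1,\; q^2+1\}$. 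Enumerating the resulting combinations, the three configurations producing the counts $2(q^5+q^2)+q^3+1$, $2q^5+q^3+q^2+1$, and $2q^5+2q^2+1$ correspond precisely to Types I, II, III respectively, while every other combination falls at or below $2q^5+q^2+1$ by direct substitution. The degenerate subcase $\Sigma_1 = \Sigma_2$ is immediate since $|Q(\Fqt) \cap V_3| \le q^5+q^3+q^2+1 < 2q^5+q^2+1$.

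A subtle but necessary input is the consistency of the rank pairs that may actually occur. Letting $H$ denote the Hermitian form on $\Fqt^5$ defining $V_3$, the general fact that the rank of a Hermitian form can drop by at most two under restriction to a codimension-one subspace gives $\mathrm{rank}(H|_{\Sigma_i}) \le \mathrm{rank}(H|_\Pi) + 2$. This rules out the a priori problematic configuration in which $\Pi \cap V_3$ is a single line ($\mathrm{rank}(H|_\Pi) = 1$) and yet some $\Sigma_i$ is non-tangent ($\mathrm{rank}(H|_{\Sigma_i}) = 4$); had it been realizable, that combination would have produced $2q^5+2q^3+q^2+1$ and violated the claimed bound.

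If $Q$ is absolutely irreducible, Theorem \ref{EdTQ} already gives the weaker bound $|Q(\Fqt) \cap V_3| \le 2q^5 + q^3 + 2q^2 + 1$, and the task is to sharpen this to $2q^5 + q^2 + 1$. My plan is to stratify $Q$ by the rank ($3$, $4$, or $5$) of its defining symmetric form, using the classical enumeration of $|Q(\Fqt)|$ in each stratum. Because an absolutely irreducible quadric in $\PP^4$ contains no plane, every line of $V_3$ lying on $Q$ contributes only $q^2+1$ points to $Q(\Fqt) \cap V_3$; bounding the number of such lines, and controlling the remaining contribution via the tangent-hyperplane geometry of $V_3$ together with the bounds from Theorem \ref{hyperplane} applied to each tangent hyperplane through a singular point of $Q$, yields the estimate. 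I expect the non-singular (rank $5$) subcase to be the main obstacle, since it lacks a singular locus to exploit and requires the most delicate bookkeeping on generators of $V_3$ contained in $Q$.
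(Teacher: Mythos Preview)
The paper does not supply its own proof of this proposition: it is quoted as \cite[Theorem 4.4]{E1} and used as a black box thereafter. So there is no argument in the paper to compare against; I can only assess your attempt on its own terms.

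Your reducible case is essentially complete and correct. The inclusion--exclusion computation together with the values from Theorem~\ref{hyperplane} and the plane-section trichotomy gives exactly the three displayed equalities for Types I--III, and the remaining admissible combinations land at or below $2q^5+q^2+1$. Your rank-drop observation ruling out the configuration ``$\Pi\cap V_3$ a single line with some $\Sigma_i$ non-tangent'' is the right ingredient and is genuinely needed; without it the case count would not close. One small omission: your dichotomy ``reducible over $\Fqt$ / absolutely irreducible'' skips the case where $Q$ is irreducible over $\Fqt$ but splits as two conjugate hyperplanes over the closure. That case is harmless (by Proposition~\ref{lac1} the $\Fqt$-points lie in a single plane, giving at most $q^3+q^2+1$), but you should say so.

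The genuine gap is the absolutely irreducible case. What you have written there is a plan, not a proof: you announce a stratification by rank and an intention to count generators of $V_3$ lying on $Q$, but you do not carry out any of these estimates, and you explicitly flag the rank-$5$ subcase as unresolved. Edoukou's actual proof in \cite{E1} proceeds by a case analysis over the projective classification of quadrics in $\PP^4(\Fqt)$ (rank $3$, rank $4$ hyperbolic or elliptic, rank $5$ parabolic) and computes or bounds $|Q(\Fqt)\cap V_3|$ in each; the sharp bound $2q^5+q^2+1$ in the ``otherwise'' clause does require real work in the nonsingular subcase, and nothing in your sketch indicates how you would get below the $2q^5+q^3+2q^2+1$ of Theorem~\ref{EdTQ}. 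Until that part is written out, the proposal does not establish the proposition.
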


We conclude the current subsection by recalling the S{\o}rensen's bound from \cite{BDH}.

\begin{theorem}\cite{BDH}\label{SoB}
Let $F \in \Fqt[x_0, x_1, x_2, x_3]$ be a homogeneous polynomial of degree $d$ and let $V_2$ be a non-degenerate Hermitian surface in $\PP^3 (\Fqt)$. If $d \le q$, then
$$|V(F)(\Fqt) \cap V_2| \le d(q^3 + q^2 - q) + q + 1.$$
Moreover, this bound is attained by $V(F)$ if and only if $V(F)$ is a union of $d$ planes $\Pi_1, \dots, \Pi_d$ such that $\Pi_1, \dots, \Pi_d$ intersect at a line $\ell$ that is a secant to $V_2$. 
\end{theorem}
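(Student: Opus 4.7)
The plan is to induct on $d$. The base case $d=1$ follows from Theorem \ref{hyperplane}: a hyperplane meets $V_2$ in at most $q^3+q^2+1 = (q^3+q^2-q)+q+1$ points, with equality at tangent planes. For $d \ge 2$, the main dichotomy is whether $F$ has a linear factor over $\Fqt$.

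In the reducible case $F = L\cdot G$ with $L$ defining a plane $\Pi$ and $\deg G = d-1$, inclusion–exclusion yields
$$|V(F)(\Fqt) \cap V_2| = |\Pi \cap V_2| + |V(G)(\Fqt) \cap V_2| - |\Pi \cap V(G) \cap V_2|.$$
If $\Pi$ is non-tangent to $V_2$, then $|\Pi \cap V_2| = q^3+1 \le q^3+q^2-q$ for $q \ge 2$, and combining with the inductive bound on $V(G)$ closes the step at once. If $\Pi$ is tangent at a point $P$, then $|\Pi \cap V_2| = q^3+q^2+1$ overshoots the target by exactly $q+1$, so one must establish the lower bound $|\Pi \cap V(G) \cap V_2| \ge q+1$. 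The device here is a strong-induction argument: when $V(G)$ is strictly sub-maximal, the slack in the inductive bound absorbs the shortfall; when $V(G)$ attains the maximum, the inductive equality characterization forces $V(G)$ to be a union of $d-1$ planes through a common secant line $\ell$, after which a geometric case split — on whether $\ell \subset \Pi$, and if not on the relative position of the point $\Pi \cap \ell$ with respect to $P$ — either delivers the required lower bound on the triple intersection or refines the inclusion–exclusion to show that this particular configuration is inherently sub-maximal.

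When $F$ has no $\Fqt$-linear factor, the intersection $V(F) \cap V_2$ is a one-dimensional scheme of degree $d(q+1)$ in $\PP^3$. Here I would bound its $\Fqt$-points by a Homma–Kim type curve bound applied to the irreducible components, together with a book-of-planes argument: for any line $\ell$ the pencil $\B(\ell)$ partitions $\PP^3(\Fqt) \setminus \ell$, and on each non-tangent plane $\Pi \in \B(\ell)$ the curve $V(F) \cap \Pi$ has degree $d < q+1$, hence shares no component with the absolutely irreducible Hermitian curve $V_2 \cap \Pi$, so B\'{e}zout applies cleanly; the tangent planes are handled using the explicit structure $q+1$ lines through the tangency point. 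The hypothesis $d \le q$ is exactly the ingredient that prevents a plane-curve of degree $d$ from containing a plane Hermitian component, which is what keeps the B\'{e}zout step sharp. The equality characterization is extracted by unraveling the induction: equality at stage $d$ forces $\Pi$ to be non-tangent, $V(G)$ to attain its inductive maximum (so $V(G)$ is a union of $d-1$ planes through a secant $\ell$), and $\Pi$ itself to contain $\ell$. I expect the tangent-plane subcase of the reducible case to be the main obstacle: routine inclusion–exclusion alone is insufficient, and one must extract the rigid extremal structure of $V(G)$ from the inductive equality characterization — which is precisely where the bulk of the work in \cite{BDH} takes place.
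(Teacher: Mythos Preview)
The paper does not prove Theorem~\ref{SoB}; it is quoted from \cite{BDH} as an external input (see the sentence ``We conclude the current subsection by recalling S{\o}rensen's bound from \cite{BDH}''). So there is no proof in this paper to compare your proposal against. What the present paper does use, and what \cite{BDH} actually supplies, is not only the inequality but the sharper statement (invoked in Proposition~\ref{dgen} via \cite[Theorem 5.1]{BDH}) that whenever $V(F)$ is not a union of planes one has the strict bound $|V(F)(\Fqt)\cap V_2|\le dq^3+(d-1)q^2+1$. The methods in \cite{BD,BDH}, as mirrored in this paper for $\PP^4$, proceed by a geometric case analysis (does $V(F)$ contain generators? planes?) together with book-of-planes counting, rather than by induction on $d$.

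Your inductive scheme has a genuine gap in the tangent-plane subcase. When $F=L\cdot G$ with $\Pi=V(L)$ tangent at $P$, you need either $|\Pi\cap V(G)\cap V_2|\ge q+1$ or enough slack in the inductive bound for $V(G)$ to compensate. Neither is automatic: take $d=2$ and let $V(G)$ be a second tangent plane $\Pi'$ at a point $P'\neq P$ with $\Pi\cap\Pi'$ a generator through $P$ and $P'$ (such pairs exist). Then $V(G)$ is exactly maximal for degree $1$, so there is zero slack, yet the equality characterization for $d-1=1$ does not force $V(G)$ to be of the secant-line type you want --- a single tangent plane attains the $d=1$ bound but is tangent, not ``through a secant line'' in any useful sense. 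More generally, sub-maximality of $V(G)$ by a single point gives slack $1$, not $q+1$, while $|\Pi\cap V(G)\cap V_2|$ can be as small as $1$ (e.g.\ $V(G)\cap\Pi$ a line through $P$ distinct from the $q+1$ generators in $\Pi$). So the dichotomy ``sub-maximal slack absorbs the overshoot / maximal forces rigid structure'' does not close; you would need an intermediate quantitative result of the form ``either $|V(G)(\Fqt)\cap V_2|\le (d-1)(q^3+q^2-q)$ or $V(G)$ contains a tangent plane'', which is essentially the content of \cite[Theorem 5.1]{BDH} and is not lighter than the theorem itself.
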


\noindent In particular, if $V(F)$ is not a union of planes, then the inequality in the above theorem is strict. We will use this observation in Proposition \ref{dgen}.

\subsection{Preliminaries from algebraic geometry}
Here we recall various basic results from algebraic geometry that will be instrumental for proving our main theorem. We will make use of the notions of dimension, degree and singularity of a variety, as can be found in standard textbooks of Algebraic Geometry, for example, the book of Harris \cite{H}. We begin with the following definitions. A variety is said to be of \textit{pure dimension} or \textit{equidimensional} if all the irreducible components of the variety have equal dimension. Further, two equidimensional varieties $X, Y \subset \PP^m$ are said to \textit{intersect properly} if $\codim (X \cap Y) = \codim \ X + \codim \  Y.$ It turns out that good upper bounds for the number of rational points on varieties defined over a finite field, depend on the degree and dimension of the variety. Because of this reason, the following proposition from \cite{H} will be indispensable for us.

\begin{proposition}\cite[Cor. 18.5]{H}\label{deg}
Let $X$ and $Y$ be equidimensional varieties that intersect properly in $\PP^m$. Then 
$\deg (X \cap Y) \le \deg X \deg Y$.
\end{proposition}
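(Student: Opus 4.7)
The plan is to derive this inequality from the refined Bezout theorem, which expresses $\deg X \cdot \deg Y$ as a weighted sum of the degrees of the irreducible components of $X \cap Y$ with strictly positive integer multiplicities.

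First I would reduce to the case where $X$ and $Y$ are themselves irreducible. Writing $X = \bigcup_i X_i$ and $Y = \bigcup_j Y_j$ as the unions of their (equidimensional) irreducible components, the proper-intersection hypothesis forces each non-empty $X_i \cap Y_j$ to have codimension exactly $\codim X + \codim Y$. Since $\deg X = \sum_i \deg X_i$, $\deg Y = \sum_j \deg Y_j$, and $\deg(X \cap Y) \le \sum_{i,j} \deg(X_i \cap Y_j)$ (with the inequality accounting for possible repetitions among the $X_i \cap Y_j$), it suffices to establish the bound when both $X$ and $Y$ are irreducible.

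Given irreducible $X, Y$ intersecting properly, let $Z_1, \dots, Z_t$ be the irreducible components of $X \cap Y$. The refined Bezout theorem provides positive integers $m_1, \dots, m_t$, the intersection multiplicities of $X$ and $Y$ along the $Z_k$'s, such that
$$\deg X \cdot \deg Y = \sum_{k=1}^{t} m_k \deg Z_k.$$
Because $\deg(X \cap Y) = \sum_k \deg Z_k$ by the definition of the degree of an equidimensional (possibly reducible) variety, and every $m_k \ge 1$, the desired bound follows at once.

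The main obstacle lies in the refined Bezout theorem itself. A reasonable route is to define $m_k$ via Serre's Tor formula on the local rings at the generic points of $Z_k$, invoke Serre's positivity theorem to guarantee $m_k \ge 1$, and then establish the identity by induction on $\codim Y$, cutting by sufficiently general hyperplanes to reduce to the complete-intersection case, where the multiplicative behaviour of Hilbert polynomials yields the identity directly. A more geometric alternative is to replace $X$ by a rationally equivalent cycle meeting $Y$ transversely via a moving lemma in the Chow ring of $\PP^m$; each intersection point then contributes multiplicity one, after which invariance of the intersection class under rational equivalence recovers the same formula.
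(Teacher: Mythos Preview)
The paper does not actually prove this proposition; it is quoted verbatim as \cite[Cor.~18.5]{H} and the surrounding text explicitly says that none of the results in that subsection are new and that their proofs can be found in the indicated references. So there is no ``paper's own proof'' to compare against.

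Your sketch is a reasonable outline of the standard argument, essentially the one found in Harris or in Fulton's intersection theory: reduce to the irreducible case and then invoke the refined B\'ezout identity $\deg X \cdot \deg Y = \sum_k m_k \deg Z_k$ with $m_k \ge 1$. The reduction step is fine once you note, as you do, that equidimensionality of $X$ and $Y$ together with the proper-intersection hypothesis forces every non-empty $X_i \cap Y_j$ to be equidimensional of the correct dimension (each component has dimension at least $\dim X_i + \dim Y_j - m$ by the projective dimension inequality and at most $\dim(X\cap Y)$ by containment). The only caveat is that your last paragraph is more of a menu of possible routes than an actual proof of refined B\'ezout; in a self-contained write-up you would have to commit to one and carry it through. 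For the purposes of this paper, however, a citation is all that is expected.
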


\begin{proposition}\label{coprime}
Let $F, G \in \Fqt[x_0, x_1, \dots, x_m]$ be nonconstant homogeneous polynomials having no common factors. Then 
\begin{enumerate}
\item[(a)] $V(F)$ and $V(G)$ intersect properly. 
\item[(b)] $V(F) \cap V(G)$ is equidimensional of dimension $m-2$.
\item[(c)] $\deg (V(F) \cap V(G)) \le \deg F \deg G$. 
\end{enumerate}
\end{proposition}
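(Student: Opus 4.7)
The three parts are tightly linked: (a) and (b) together amount to showing that every irreducible component of $V(F)\cap V(G)$ has dimension exactly $m-2$, and (c) will then fall out of Proposition \ref{deg} together with a degree bookkeeping. So I would treat (b) as the main step and derive (a) and (c) from it.

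For (b): first decompose $F$ and $G$ into their distinct irreducible factors over $\overline{\FF}_q$, say $F=\prod_i F_i^{e_i}$ and $G=\prod_j G_j^{f_j}$, so that $V(F)=\bigcup_i V(F_i)$ and $V(G)=\bigcup_j V(G_j)$ are equidimensional hypersurfaces of dimension $m-1$. By the projective form of Krull's Hauptidealsatz (equivalently, the projective dimension theorem), every nonempty irreducible component of $V(F_i)\cap V(G_j)$ has dimension at least $(m-1)+(m-1)-m=m-2$. To rule out a component of dimension $m-1$, I would argue by contradiction: such a component would be an irreducible hypersurface $V(H)$ with $H$ irreducible, contained in both $V(F_i)$ and $V(G_j)$, and the Nullstellensatz would then force $H$ to be associate to $F_i$ and to $G_j$, contradicting the assumption that $F$ and $G$ share no factor. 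Hence every component of $V(F)\cap V(G)=\bigcup_{i,j}V(F_i)\cap V(G_j)$ has dimension exactly $m-2$, giving (b) and, by the very definition of proper intersection, also (a).

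For (c), I would apply Proposition \ref{deg} to $V(F)$ and $V(G)$, which are equidimensional of codimension $1$ and meet properly by (a), to conclude
\[
\deg\bigl(V(F)\cap V(G)\bigr)\ \le\ \deg V(F)\cdot \deg V(G).
\]
It then suffices to note the standard inequality $\deg V(F)=\sum_i \deg F_i\le \sum_i e_i\deg F_i=\deg F$, and similarly for $G$.

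The main (and really only) obstacle is a potential mismatch between coprimality in $\Fqt[x_0,\dots,x_m]$ and in $\overline{\FF}_q[x_0,\dots,x_m]$, since the argument above is most naturally carried out over $\overline{\FF}_q$ while $F$ and $G$ live over $\Fqt$. I would dispose of this by the standard Galois-descent observation: any common irreducible factor of $F$ and $G$ in $\overline{\FF}_q[x_0,\dots,x_m]$ would be permuted by $\mathrm{Gal}(\overline{\FF}_q/\Fqt)$, and the product over its Galois orbit would already be a common factor in $\Fqt[x_0,\dots,x_m]$. Thus the two notions of coprimality coincide, and the proof above goes through verbatim.
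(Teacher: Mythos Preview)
Your argument is correct. The paper's own proof is essentially a list of citations: it attributes (a) to Krull's principal ideal theorem, (b) to Macaulay's unmixedness theorem (referencing Zariski--Samuel), and (c) directly to Proposition~\ref{deg}. The overall architecture matches yours, but the treatment of (b) differs. The paper invokes unmixedness: the polynomial ring is Cohen--Macaulay, the ideal $(F,G)$ has height $2$, hence every associated prime has height $2$ and $V(F,G)$ is equidimensional. Your route is more elementary: you pass to irreducible factors over $\overline{\FF}_q$ and argue pairwise via the projective dimension theorem together with the Nullstellensatz, which avoids any Cohen--Macaulay machinery at the cost of not generalizing as cleanly to intersections of three or more hypersurfaces. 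Your explicit handling of two points the paper leaves implicit---the inequality $\deg V(F)\le \deg F$ when $F$ has repeated factors, and the descent of coprimality from $\Fqt$ to $\overline{\FF}_q$ via Galois orbits---is correct and a worthwhile addition.
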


\begin{proof}
Part (a) is well-known in Algebraic Geometry and is generally proved using Krull's principal ideal theorem. Part (b) is proved using Macaulay's unmixedness theorem (See \cite[Chapter 7, Theorem 26]{ZS}). Part (c) follows straightaway from Proposition \ref{deg}. 
\end{proof}

\noindent We conclude this subsection by recording a very important observation that follows from the two propositions above. 

\begin{remark}\label{rem1}\normalfont
    Suppose $V$ is a Hermitian variety in $\PP^m (\Fqt)$ of rank at least $3$. Since a Hermitian variety is absolutely irreducible, if $X$ is any hypersurface in $\PP^m$ defined over $\Fqt$ with $\deg X \le q$, then $X$ and $V$ intersect properly. As a consequence, $\deg (X \cap V) \le  (q+1)\deg X.$
\end{remark}

\subsection{Basic upper bounds}
In this subsection, we recall some well-known upper bounds on the number of rational points on varieties defined over a finite field $\Fq$ with given degrees and dimensions. We begin with a result by J. -P. Serre \cite{S} that is undoubtedly one of the fundamental results. This was independently proved by S{\o}rensen \cite{So}. 
 
\begin{theorem}[Serre's inequality]\label{serre}
Let $ F \in \Fq[x_0, x_1, \dots, x_m]$ be a nonconstant homogeneous polynomial of degree $d \le q$. Then 
$$|\V(F)| \le dq^{m-1} + p_{m-2},$$
where $p_{m-2} = 1 + q + \dots + q^{m-2}$. Moreover,
equality holds if and only if $V(F)$ is a union of $d$ hyperplanes defined over $\Fq$ all containing a common linear subspace of codimension $2$. 
\end{theorem}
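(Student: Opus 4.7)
My plan is to proceed by induction on $m$. The base case $m = 1$ is immediate: a nonzero univariate homogeneous polynomial of degree $d$ has at most $d$ zeros in $\PP^1(\Fq)$, matching the bound $dq^0 + p_{-1} = d$ with the empty-sum convention $p_{-1} = 0$. For the inductive step, I would split into two cases depending on whether $V(F)$ contains a hyperplane defined over $\Fq$.

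\textbf{Reducible case.} Suppose $V(F) \supset H$ for some hyperplane $H = V(L)$ defined over $\Fq$. Writing $F = L \cdot F'$ with $\deg F' = d-1$, the set of $\Fq$-points of $V(F)$ decomposes as $H(\Fq) \sqcup (V(F')(\Fq) \setminus H(\Fq))$. The first piece has cardinality $p_{m-1}$. The second piece coincides with the $\Fq$-zero set of the dehomogenization of $F'$ on the affine chart $\PP^m \setminus H \cong \AA^m$, a polynomial in $m$ variables of degree at most $d-1 \le q-1$. By the affine Schwartz--Zippel lemma, proved by an easy induction on the number of variables, this contributes at most $(d-1)q^{m-1}$ points. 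Summing gives $|\V(F)| \le p_{m-1} + (d-1)q^{m-1} = dq^{m-1} + p_{m-2}$.

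\textbf{No linear factor case.} Suppose $V(F)$ contains no $\Fq$-rational hyperplane. I would double-count incidences between $\Fq$-points of $V(F)$ and the $p_m$ many $\Fq$-rational hyperplanes of $\PP^m$; since each $\Fq$-point lies on exactly $p_{m-1}$ hyperplanes, one has
\[
p_{m-1} \cdot |\V(F)| \;=\; \sum_{H} |V(F) \cap H(\Fq)|,
\]
the sum running over all $\Fq$-rational hyperplanes $H$ of $\PP^m$. Under the standing hypothesis no $H$ lies in $V(F)$, so each $V(F) \cap H$ is a hypersurface of degree at most $d$ in $H \cong \PP^{m-1}$, and the inductive hypothesis bounds each summand by $dq^{m-2} + p_{m-3}$. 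A short computation using the identities $p_m = qp_{m-1} + 1$ and $p_{m-2} = qp_{m-3} + 1$ verifies $p_m(dq^{m-2} + p_{m-3}) \le p_{m-1}(dq^{m-1} + p_{m-2})$ whenever $d \le q$, which yields the claimed bound after dividing by $p_{m-1}$.

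For the equality characterization, equality in the reducible case forces equality in the Schwartz--Zippel step, and tracing through its inductive proof forces the dehomogenization of $F'$ to be a product of $d-1$ distinct parallel affine linear forms; projectivizing, the $d-1$ hyperplanes cut out by $F'$ share a common codimension-$2$ subspace $L' \subset H$, and together with $H$ they form the asserted union of $d$ hyperplanes through a common codimension-$2$ linear subspace. Equality in the no-linear-factor case would, by the inductive equality statement, force every $\Fq$-hyperplane section of $V(F)$ to itself be a union of $d$ hyperplanes through a common codimension-$2$ subspace; one can then argue that such uniform geometric data forces $V(F)$ to already contain an $\Fq$-hyperplane component, contradicting the hypothesis of the second case. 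The hardest part I anticipate will be the last step of this equality analysis, namely rigorously ruling out equality in the no-linear-factor case and cleanly extracting the Schwartz--Zippel equality case.
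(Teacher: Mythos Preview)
The paper does not prove this theorem: Serre's inequality is stated in Section~\ref{sec:prel} as a preliminary result with references to \cite{S} and \cite{So}, and the authors explicitly note that ``none of the results mentioned in this section is new, and their proof can be found in the indicated references.'' So there is no proof in the paper to compare your proposal against.

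That said, your outline is a standard and essentially correct route to the inequality. Two remarks on the equality analysis. First, your worry about ruling out equality in the no-linear-factor case is misplaced: if you actually carry out the computation you describe, you find
\[
p_{m-1}\bigl(dq^{m-1}+p_{m-2}\bigr)-p_m\bigl(dq^{m-2}+p_{m-3}\bigr)=q^{m-2}(q+1-d),
\]
which is strictly positive for $d\le q$. Hence the averaging bound is already strictly below $dq^{m-1}+p_{m-2}$ in that case, and no further geometric argument is needed. Second, the genuinely nontrivial step is the one you flag last: identifying the equality case of the affine Schwartz--Zippel bound (that a polynomial of degree $d-1\le q-1$ in $m$ variables over $\Fq$ with exactly $(d-1)q^{m-1}$ zeros must be a product of $d-1$ pairwise parallel affine-linear forms). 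This is the classification of minimum-weight codewords of generalized Reed--Muller codes, due to Delsarte--Goethals--MacWilliams; it is true, but it is not something you can just ``trace through'' the usual one-line Schwartz--Zippel induction. You should either cite that result or supply an independent argument (for instance, inducting on $d$ rather than on $m$, using that equality forces $V(F)$ to contain a hyperplane and then applying the already-proved inequality to the residual degree-$(d-1)$ factor).
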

\noindent In particular, any plane curve defined over $\Fq$ of degree $d \le q$ has at most $dq+1$ many $\Fq$-rational points. Furthermore, a plane curve of degree $d$ that admits exactly $dq+1$ many $\Fq$-rational points is a union of $d$ lines passing through a common point. Among various other known bounds for plane curves defined over finite fields, we would make use of the following result \cite[Corollary 2.5]{AP} by Aubry and Perret. 

\begin{theorem}\cite[Corollary 2.5]{AP}\label{ap}
Let $\C$ be a plane absolutely irreducible algebraic curve defined over $\Fq$. Then $|\#C(\Fq) - (q+1)| \le (d-1)(d-2)\sqrt{q}$, where $d = \deg \C$.  
\end{theorem}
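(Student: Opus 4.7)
The plan is to pass to the normalization of $\C$, apply the Hasse--Weil bound there, and control the resulting discrepancy in rational point counts via the genus defect.

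First, since $\C$ is absolutely irreducible and defined over $\Fq$, the normalization map $\nu : \tilde{\C} \to \C$ is a finite birational morphism defined over $\Fq$, with $\tilde{\C}$ a smooth projective absolutely irreducible curve of some (geometric) genus $g$. Applying the Hasse--Weil bound to $\tilde{\C}$ gives
$$\bigl|\#\tilde{\C}(\Fq) - (q+1)\bigr| \le 2g\sqrt{q}.$$
Recall also that the arithmetic genus of a plane curve of degree $d$ equals $p_a = (d-1)(d-2)/2$, and the classical genus-drop formula, read off the conductor exact sequence $0 \to \mathcal{O}_\C \to \nu_*\mathcal{O}_{\tilde{\C}} \to \mathcal{F} \to 0$, yields
$$p_a - g = \sum_{P \in \mathrm{Sing}(\C)} \delta_P,$$
where $\delta_P = \dim_{\overline{\FF}_q}(\widetilde{\mathcal{O}}_P/\mathcal{O}_P)$ is the local delta invariant.

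Next I would relate the two point counts. Because $\nu$ is defined over $\Fq$, every $\Fq$-rational point of $\tilde{\C}$ maps to an $\Fq$-rational point of $\C$, hence
$$\#\tilde{\C}(\Fq) - \#\C(\Fq) = \sum_{P \in \C(\Fq)} \bigl(\#\nu^{-1}(P)(\Fq) - 1\bigr).$$
Smooth points contribute $0$, so the sum is supported on singular $\Fq$-rational points. At such a $P$, the number of $\Fq$-rational preimages is between $0$ and the number $r_P$ of geometric branches, and the standard local inequalities $r_P - 1 \le \delta_P$ and $\delta_P \ge 1$ show that each such term has absolute value at most $\delta_P$. Combining with the genus-drop formula gives
$$\bigl|\#\C(\Fq) - \#\tilde{\C}(\Fq)\bigr| \le \sum_{P \in \mathrm{Sing}(\C)} \delta_P = p_a - g.$$

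Finally, adding the two estimates yields $\bigl|\#\C(\Fq) - (q+1)\bigr| \le 2g\sqrt{q} + (p_a - g)$. Since $p_a \ge g$ and $2\sqrt{q} \ge 1$, we have $(p_a - g) \le 2(p_a - g)\sqrt{q}$, whence
$$\bigl|\#\C(\Fq) - (q+1)\bigr| \le 2 p_a \sqrt{q} = (d-1)(d-2)\sqrt{q},$$
which is the desired bound. The main obstacle in this plan is the second step: identifying the correct bookkeeping at each singular point (branches vs.\ $\Fq$-rational preimages vs.\ delta invariants) so that the naive inequality $|\#\C(\Fq) - \#\tilde{\C}(\Fq)| \le p_a - g$ actually holds; once this is in place, the rest is a routine combination of Hasse--Weil with an elementary bound.
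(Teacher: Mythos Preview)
The paper does not give its own proof of this statement; it is quoted verbatim as \cite[Corollary~2.5]{AP} in the preliminaries, with the explicit disclaimer that none of the results in that section are new and that proofs may be found in the indicated references. So there is no in-paper argument to compare against.

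Your proposal is correct and is in fact the Aubry--Perret argument: pass to the normalization $\tilde{\C}$, apply Hasse--Weil there, and control $|\#\C(\Fq)-\#\tilde{\C}(\Fq)|$ by the total delta invariant $p_a-g$. Your bookkeeping at singular $\Fq$-points is fine: the fiber count lies in $\{0,1,\dots,r_P\}$, so $|\#\nu^{-1}(P)(\Fq)-1|\le\max(1,r_P-1)\le\delta_P$, and summing only over $\Fq$-rational singular points is harmless since this is dominated by the full sum $\sum_{P\in\mathrm{Sing}(\C)}\delta_P=p_a-g$. The final step $(p_a-g)+2g\sqrt{q}\le 2p_a\sqrt{q}$ via $2\sqrt{q}\ge 1$ is also exactly how Aubry and Perret close the argument. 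The ``main obstacle'' you flag is therefore not really an obstacle; the local inequalities $r_P-1\le\delta_P$ and $\delta_P\ge 1$ are standard and suffice.
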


For nonsingular curves, the above bound is known as \textit{Hasse-Weil bound}. However, the result by Aubry and Perret applies to a larger family of curves. To solve our problem, it is imperative that we consider bounds on varieties that are not necessarily hypersurfaces. To this end, we have the following result by Lachaud and Rolland. 

\begin{proposition}\cite[Prop. 2.3]{LR}\label{lac}
Let $X$ be an equidimensional projective (resp. affine) variety defined over a finite field $\Fq$. Further assume that $\dim X = \delta$ and $\deg X = d$.  Then
$$|X(\Fq)| \le d p_{\delta} \ \ \ \ \ \ (\mathrm{resp.} \ \  |X (\Fq)| \le d q^{\delta}),$$
where $p_{\delta} = 1 + q + \dots + q^{\delta}$.
\end{proposition}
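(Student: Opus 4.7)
My plan is to argue by induction on the dimension $\delta$, establishing the affine and projective bounds in tandem.

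The base case $\delta = 0$ is immediate: an equidimensional variety of dimension $0$ and degree $d$ consists of at most $d$ geometric points (counted with multiplicity), so $|X(\Fq)| \le d$, which matches both $d p_0$ and $d q^0$.

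For the inductive step ($\delta \ge 1$) I would first derive the affine bound from the affine case in dimension $\delta-1$, and then combine this with the projective inductive hypothesis to obtain the projective bound in dimension $\delta$. For the affine case, after an $\Fq$-linear change of coordinates on $\A^m$ chosen so that no irreducible component of $X$ lies in a hyperplane $\{x_m = c\}$ with $c \in \Fq$, each slice $X_c := X \cap \{x_m = c\}$ is cut out from $X$ by the non-zero-divisor $x_m - c$. By the same standard application of Krull's principal ideal theorem and Macaulay's unmixedness theorem that underlies Proposition \ref{coprime}, each $X_c$ is equidimensional of dimension $\delta - 1$ and of degree at most $d$ inside $\{x_m = c\} \cong \A^{m-1}$. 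The affine inductive hypothesis then gives $|X_c(\Fq)| \le d q^{\delta-1}$, and summing over the $q$ values of $c \in \Fq$ yields
\[
|X(\Fq)| \;=\; \sum_{c \in \Fq} |X_c(\Fq)| \;\le\; q \cdot d q^{\delta-1} \;=\; d q^{\delta}.
\]
For the projective case, I would pick an $\Fq$-rational hyperplane $H$ containing no irreducible component of $X$ and split $\PP^m(\Fq) = \A^m(\Fq) \sqcup H(\Fq)$. Since $X \cap \A^m$ is equidimensional of dimension $\delta$ and degree at most $d$ in $\A^m$, while $X \cap H$ is equidimensional of dimension $\delta - 1$ and degree at most $d$ in $H \cong \PP^{m-1}$, the affine bound just established and the projective inductive hypothesis combine, via the identity $p_\delta = q^\delta + p_{\delta - 1}$, to give
\[
|X(\Fq)| \;\le\; d q^{\delta} + d p_{\delta - 1} \;=\; d p_{\delta},
\]
as required.

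The one delicate point is ensuring the required genericity of the coordinate system and of the hyperplane $H$ can be arranged over $\Fq$ regardless of the size of $q$. For each irreducible component $Y_i$ of $X$ and each forbidden value $c \in \Fq$, the condition ``$Y_i \subset \{L = c\}$'' (for a varying affine linear form $L$) cuts out a proper $\Fq$-linear subset of the parameter space of $L$'s; since $X$ has only finitely many components, a standard counting argument shows that an acceptable $L$, and analogously an acceptable hyperplane $H$ missing every component, exist over $\Fq$. I expect this genericity bookkeeping to be the only real technical obstacle, though it is entirely standard.
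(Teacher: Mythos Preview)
The paper does not prove this proposition; it is quoted from \cite{LR} and used as a black box, so there is no in-paper argument to compare against. Your inductive slicing strategy is the standard one and is essentially correct in outline, but the genericity step has a real gap. You claim that a ``standard counting argument'' produces an $\Fq$-linear form $L$ that is non-constant on every irreducible component of $X$, since each component rules out only a proper linear subspace of forms. The trouble is that $X$ can have up to $d$ components, and when $d \ge q$ the union of $d$ proper subspaces of $(\Fq^m)^*$ can exhaust the whole space. Concretely: over $\FF_2$, let $X \subset \AA^2$ be the three concurrent lines $x=0$, $y=0$, $x+y=0$ (so $\delta = 1$, $d = 3$); every nonzero linear form on $\FF_2^2$ is constant on one of these lines, so no admissible $L$ exists and your slicing cannot even start. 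The same obstruction hits the choice of the hyperplane $H$ in the projective step.

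The repair is to reduce to irreducible $X$ at the outset: the degrees of the irreducible components of an equidimensional variety sum to at most $d$, so $|X(\Fq)| \le \sum_i |Y_i(\Fq)|$ and it suffices to bound each $|Y_i(\Fq)|$ by $(\deg Y_i)\, p_\delta$. With $X$ irreducible there is only one component to avoid, the forbidden forms lie in a single proper subspace, and an admissible $L$ (resp.\ $H$) exists over any $\Fq$. As a side benefit, for irreducible $X$ Krull's principal ideal theorem alone forces every nonempty slice $X \cap \{L=c\}$ to be equidimensional of dimension $\delta - 1$; the appeal to Macaulay's unmixedness theorem is then unnecessary.
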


The following result from \cite{LR} on varieties that are irreducible over $\Fq$, but not absolutely irreducible, will be very important. 

\begin{proposition}\cite[Prop. 3.8]{LR}\label{lac1}
Let $X$ be a projective variety defined over $\Fq$ that is irreducible over $\Fq$, but not absolutely irreducible. Then $X(\Fq) = \Sing (X) (\Fq)$, where $\Sing (X)$ is the subset of $X$ consisting of all the singular points of $X$ in $\overline{\FF}_q$. 
\end{proposition}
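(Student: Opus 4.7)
The plan is to reduce the statement to the standard observation that any point lying on two distinct irreducible components of a variety is singular. The engine driving the proof is the Galois action of $\mathrm{Gal}(\overline{\FF}_q / \Fq)$ on the set of absolutely irreducible components of $X$: the hypothesis that $X$ is irreducible over $\Fq$ but not absolutely irreducible forces this action to be transitive on at least two components, while the $\Fq$-rationality of a point forces that point to lie in every component simultaneously.

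Concretely, I would proceed as follows. First, decompose $X = X_1 \cup \cdots \cup X_s$ into absolutely irreducible components over $\overline{\FF}_q$. Since $X$ is defined over $\Fq$, the Galois group $G = \mathrm{Gal}(\overline{\FF}_q / \Fq)$ permutes the set $\{X_1, \dots, X_s\}$; and since $X$ is irreducible over $\Fq$, this permutation action is transitive (otherwise a union of $X_i$'s forming a proper $G$-stable subset would descend to a proper $\Fq$-subvariety of $X$, contradicting irreducibility). The hypothesis that $X$ is not absolutely irreducible gives $s \ge 2$. Now take any $P \in X(\Fq)$. Then $P$ is fixed by every $\sigma \in G$. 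Since $P \in X$, we may assume $P \in X_1$; applying $\sigma$ yields $P = \sigma(P) \in \sigma(X_1)$, and by transitivity $\sigma(X_1)$ ranges over all $X_j$, so $P \in X_j$ for every $j = 1, \dots, s$. In particular, $P \in X_1 \cap X_2$ lies on two distinct irreducible components of $X$. The classical fact that such a point must be singular on $X$ (which can be seen, for instance, from the local ring at $P$ failing to be a domain, so that its Zariski tangent space has dimension strictly greater than $\dim_P X$) then gives $P \in \Sing(X)$. Since $P \in X(\Fq)$ was arbitrary, this shows $X(\Fq) \subseteq \Sing(X)(\Fq)$; the reverse containment is trivial.

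The main subtlety, and the only step I expect to require care, is the transitivity of the Galois action on the absolutely irreducible components. This is essentially a descent statement: the $G$-orbits in $\{X_1, \dots, X_s\}$ correspond to the $\Fq$-irreducible components of $X$, so a single orbit is equivalent to $\Fq$-irreducibility. Everything else—the decomposition into absolutely irreducible components, Galois-invariance of the set of components, and the singularity of a point lying on two distinct components—is standard and can be quoted from any reference on varieties over non-algebraically closed fields.
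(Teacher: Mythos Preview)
Your argument is correct. Note, however, that the paper does not supply its own proof of this proposition: it is quoted from \cite[Prop.~3.8]{LR} as part of the preliminaries, with the explicit remark that ``none of the results mentioned in this section is new, and their proof can be found in the indicated references.'' The Galois-orbit argument you give---transitivity of $\mathrm{Gal}(\overline{\FF}_q/\Fq)$ on the absolutely irreducible components forcing every $\Fq$-rational point into the intersection of all components, hence into the singular locus---is precisely the standard proof and is what one finds in the cited reference.
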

 
The above result shows, in particular, that if $X$ is a projective variety defined over $\Fq$, that is irreducible over $\Fq$ but not absolutely irreducible, then any $\Fq$-rational point on $X$ is a singular point of $X$. We conclude this section by stating a recent result that will be useful later.

\begin{theorem}\label{D}\cite[Theorem 3.1]{D}
    Let $Y \subset \PP^3$ be a surface of degree $d$ defined over $\Fq$ and $P \in Y(\Fq)$. Then one of the following holds: 
    \begin{enumerate}
        \item[(a)] $Y$ contains a plane defined over $\Fq$, 
        \item[(b)] $Y$ contains a cone over a plane curve defined over $\Fq$ with center at $P$, 
        \item[(c)] $\# \{\ell \subset \PP^3 \mid \ell \ \mathrm{is \ a \ line \ such \ that} \ P \in \ell \subset Y\} \le d(d - 1)$.
        \end{enumerate}
    \end{theorem}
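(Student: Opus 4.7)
The natural approach is to place $P$ at a standard position and reduce the problem to a Bezout-type intersection count. By a linear change of coordinates defined over $\Fq$, assume $P = [0:0:0:1]$; since $P \in Y$, the coefficient of $x_3^d$ in the defining polynomial $F$ vanishes, and one can expand
\[
F \;=\; G_1(x_0,x_1,x_2)\, x_3^{d-1} \;+\; G_2(x_0,x_1,x_2)\, x_3^{d-2} \;+\; \cdots \;+\; G_d(x_0,x_1,x_2),
\]
with each $G_i \in \Fq[x_0,x_1,x_2]$ homogeneous of degree $i$ (possibly zero). Parametrizing a line $\ell$ through $P$ as $[ta_0:ta_1:ta_2:s]$ for some $[a_0:a_1:a_2] \in \PP^2$, substitution into $F$ gives the binary form $\sum_{i=1}^d t^i s^{d-i}\, G_i(a_0,a_1,a_2)$, and so $\ell \subset Y$ if and only if $G_i(a_0,a_1,a_2) = 0$ for all $i=1,\ldots,d$. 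The lines through $P$ contained in $Y$ are therefore in bijection with $\bigcap_{i=1}^d V(G_i) \subset \PP^2$.

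Next I would set $H := \gcd(G_1,\ldots,G_d) \in \Fq[x_0,x_1,x_2]$. As $H$ divides every $G_i$ and is independent of $x_3$, one has $H \mid F$, and $V(H) \subset \PP^3$ is precisely the cone with vertex $P$ over the plane curve $V(H) \subset \PP^2$; this cone is defined over $\Fq$ and contained in $Y$. If $\deg H = 1$, the cone is a plane, yielding (a); if $\deg H \ge 2$, we are in (b). Consequently, we may assume $\gcd(G_1,\ldots,G_d)=1$, in which case $\bigcap_i V(G_i) \subset \PP^2$ is finite.

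To produce the bound $d(d-1)$, introduce $F_{x_3} := \partial F/\partial x_3 = \sum_{i=1}^{d-1}(d-i)\, G_i\, x_3^{d-i-1}$. Direct substitution shows that if $G_i(a)=0$ for every $i$, then $F_{x_3}$ vanishes identically along the line through $P$ in the direction $[a]$, so every such line lies inside $V(F)\cap V(F_{x_3})$. Provided that $F$ and $F_{x_3}$ share no common factor in $\overline{\FF}_q[x_0,\ldots,x_3]$, Proposition~\ref{coprime} gives that $V(F)\cap V(F_{x_3})$ is equidimensional of dimension $1$ and degree at most $d(d-1)$; since each line contributes degree $1$, at most $d(d-1)$ such lines can fit inside it, yielding (c).

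The main obstacle is thus to rule out, under the hypothesis $\gcd(G_1,\ldots,G_d)=1$, the possibility that $F$ and $F_{x_3}$ share a nontrivial common factor. After passing to the reduction of $F$ (which only lowers the degree and leaves the set of lines in $Y$ unchanged), one may assume $F$ is squarefree. If $K$ is an irreducible common factor, write $F = KF'$ with $\gcd(K,F')=1$; differentiating gives $K \mid K_{x_3}F'$, hence $K \mid K_{x_3}$, and so $K_{x_3}=0$ by degree. Thus $K$ involves only $x_0,x_1,x_2$, and comparing coefficients of the $x_3^j$ in $F = KF'$ forces $K \mid G_i$ for every $i$, contradicting $\gcd(G_1,\ldots,G_d)=1$. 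The one remaining subtlety is the inseparable case $F_{x_3}\equiv 0$ in characteristic $p$, where $F$ involves only $x_3$-powers divisible by $p$; there one runs exactly the same argument with the expansion $F = \sum_j H_j(x_0,x_1,x_2)\, x_3^{jp}$, deducing either a cone (or plane) component in $Y$ via a common factor, or the bound $d(d-1)$ via Bezout applied to two of the $H_j$'s of degrees at most $d$ and $d-p$.
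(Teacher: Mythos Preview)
The paper does not give its own proof of this statement; it is quoted from \cite{D} as a preliminary result, so there is no ``paper's proof'' to compare against. Your overall strategy---placing $P$ at $[0{:}0{:}0{:}1]$, expanding $F=\sum_i G_i\,x_3^{d-i}$, identifying lines through $P$ in $Y$ with $\bigcap_i V(G_i)\subset\PP^2$, and reading off a cone or plane component from a nontrivial $\gcd(G_1,\dots,G_d)$---is the standard and correct one, and is essentially what the cited reference does.

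There is, however, a genuine gap in positive characteristic in your passage from $K_{x_3}=0$ to ``$K$ involves only $x_0,x_1,x_2$'': over $\overline{\FF}_q$ an irreducible $K$ with $K_{x_3}=0$ may still involve $x_3$. For instance in characteristic~$2$ take $F=(x_3^2+x_0x_1)(x_0x_3+x_1x_2)$; then $P=[0{:}0{:}0{:}1]\in V(F)$, $F$ is squarefree, one checks $G_1=x_0$, $G_2=x_1x_2$, $G_3=x_0^2x_1$, $G_4=x_0x_1^2x_2$ so $\gcd(G_i)=1$, yet $F_{x_3}=x_0(x_3^2+x_0x_1)\ne 0$ and $K=x_3^2+x_0x_1$ is an irreducible common factor of $F$ and $F_{x_3}$ that does \emph{not} divide the $G_i$. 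Thus your argument breaks outside the case $F_{x_3}\equiv 0$, and your treatment of that case is also imprecise (pairwise coprimality of two $H_j$'s is not implied by $\gcd$ of all of them being $1$). The fix is to bypass $F_{x_3}$ entirely and work in $\PP^2$: let $i_0$ be the least index with $G_{i_0}\ne 0$; if $i_0=d$ then $F=G_d$ and $Y$ is a cone with vertex $P$, so assume $i_0\le d-1$. For each irreducible factor $L$ of $G_{i_0}$ choose $j(L)$ with $L\nmid G_{j(L)}$ (possible since $\gcd(G_i)=1$); then
\[
\Big|\bigcap_i V(G_i)\Big|\ \le\ \sum_{L}\big|V(L)\cap V(G_{j(L)})\big|\ \le\ \sum_{L}\deg L\cdot d\ \le\ d\,i_0\ \le\ d(d-1),
\]
which gives (c) in every characteristic.
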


\begin{remark}\label{remD}\normalfont
    We explain the application of Theorem \ref{D} in case of cubic surfaces. Suppose $Y$ is a cubic surface in $\PP^3$ that does not contain any plane defined over $\Fq$ and $P \in Y(\Fq)$. If $Y$ contains a cone over a plane curve $Y'$ defined over $\Fq$ with center at $P$, then $\deg Y' = 3$. Indeed, if $\deg Y' = 1$ or $2$, then $Y$ contains a plane defined over $\Fq$, which contradicts our assumption. Moreover, it is clear that $Y'$ must be irreducible over $\Fq$. The Theorem above thus implies that such a cubic surface $Y$ is either a cone over a plane irreducible cubic curve $Y'$ with center at $P$, or $Y$ may contain at most $6$ lines passing through $P$. 
\end{remark}

\section{Intersection of a singular Hermitian surface and a surface of degree $d$}\label{sec:singular}
Let $V_2'$ denote a singular Hermitian surface of rank $3$ in $\PP^3$. Geometrically, the Hermitian variety $V_2'$ is a cone over a plane nonsingular Hermitian curve $V_1$ with center at a point $P$. Suppose $F \in \Fqt[x_0, x_1, x_2, x_3]$ be a homogeneous polynomial of degree $d$. Then $V(F)$ is a surface of degree $d$ in $\PP^3$ defined over $\Fqt$.
We note that any plane that does not pass through $P$ intersects $V_2'$ at a non-degenerate Hermitian curve, and $V_2'$ is still a cone over that Hermitian curve with center at $P$. In this section, we are going to give an upper bound on the quantity $|V(F)(\Fqt) \cap V_2'|$ that will be instrumental in the proof of our main result. Throughout this section, we will assume that $d \le q$. Since our main purpose of this article is to work towards a proof of Conjecture \ref{EdC}, it is enough for us to assume that $d \le q$. 

\begin{lemma}\label{lem31}
 If $\Pi$ is a plane defined over $\Fqt$ in $\PP^3$ that does not pass through $P$ and $V(F)$ does not contain $\Pi$, then $|\V(F) \cap V_2' \cap \Pi| \le d(q+1)$.    
\end{lemma}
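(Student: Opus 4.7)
The plan is to reduce the question to a proper intersection of two plane curves inside $\Pi$ and then apply B\'ezout's theorem via Proposition \ref{deg}. Since $P \notin \Pi$, the cone description of $V_2'$ (with vertex at $P$ over a nonsingular plane Hermitian curve) given in the paragraph just before the lemma shows that $V_2' \cap \Pi$ is a non-degenerate Hermitian curve $V_1'$ in $\Pi$. In particular, as remarked in Section \ref{sec:prel}, $V_1'$ is absolutely irreducible of degree $q+1$.

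Next, because $V(F)$ does not contain $\Pi$, the restriction $F|_{\Pi}$ is a nonzero homogeneous polynomial of degree $d$, so $C := V(F) \cap \Pi$ is a plane curve of degree $d$ in $\Pi \cong \PP^2$. The crucial step is to show that $C$ and $V_1'$ share no common component. Indeed, a common component would be an irreducible component of $V_1'$, hence $V_1'$ itself by absolute irreducibility, and therefore of degree $q+1$. But any component of $C$ has degree at most $d \le q < q+1$, which is a contradiction. Hence $C$ and $V_1'$ meet properly in the plane $\Pi$.

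Applying Proposition \ref{deg} (equivalently B\'ezout's theorem in $\Pi$) to the properly intersecting equidimensional curves $C$ and $V_1'$ gives
\[
\deg(C \cap V_1') \le \deg C \cdot \deg V_1' = d(q+1).
\]
Since $C \cap V_1'$ is zero-dimensional, the number of its $\Fqt$-rational points is bounded above by its degree, yielding
\[
|\V(F) \cap V_2' \cap \Pi| = |C(\Fqt) \cap V_1'(\Fqt)| \le d(q+1),
\]
which is the required estimate. There is no genuine obstacle in the argument; the hypothesis $d \le q$ is used precisely to force the degree mismatch with the absolutely irreducible Hermitian curve $V_1'$, and the condition $P \notin \Pi$ is what guarantees that the planar section $V_2' \cap \Pi$ is the non-degenerate curve $V_1'$ rather than a more singular configuration.
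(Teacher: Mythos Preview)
Your argument is correct and follows the same approach as the paper: identify $V_2' \cap \Pi$ as an absolutely irreducible Hermitian curve of degree $q+1$, note that $V(F) \cap \Pi$ is a plane curve of degree $d$, and apply B\'ezout's theorem. You simply spell out more explicitly why the two curves share no common component (via the degree inequality $d \le q < q+1$), whereas the paper leaves this implicit.
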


\begin{proof}
As noted above, the curve $V_2' \cap \Pi$ is a non-degenerate Hermitian curve in the plane $\Pi$ and, in particular, an irreducible plane curve of degree $q+1$. By hypothesis, the curve $V(F) \cap \Pi$ is a plane curve of degree $d$. The assertion now follows from B\'{e}zout's Theorem. 
\end{proof}

\begin{lemma}\label{lem32}
If $\Pi$ is a plane in $\PP^3$ defined over $\Fqt$ such that $P \not\in \Pi$ and $\Pi \subset V(F)$, then $|V(F)(\Fqt) \cap V_2' \cap \Pi^C| \le (d-1)(q+1)q^2$, where $\Pi^C = \PP^3 \setminus \Pi$.  
\end{lemma}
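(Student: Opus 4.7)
The plan is to factor out the plane $\Pi$ from $V(F)$ and apply the affine version of the Lachaud--Rolland bound (Proposition \ref{lac}) to the residual intersection with $V_2'$.

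First I would let $L \in \Fqt[x_0, x_1, x_2, x_3]$ be a linear form defining $\Pi$. Since $\Pi \subset V(F)$, the form $L$ divides $F$, so I can write $F = L^k G$ with $L \nmid G$; then $\deg G \le d-1$ and $V(F) \cap \Pi^C = V(G) \cap \Pi^C$. Because $V_2'$ is absolutely irreducible of degree $q+1$ (having rank $3$), while $\deg G \le d-1 \le q-1 < q+1$, it follows that $V_2' \not\subset V(G)$ and hence that the defining polynomial of $V_2'$ and $G$ share no common factor. Proposition \ref{coprime} then gives that $V(G) \cap V_2'$ is equidimensional of dimension $1$ with degree at most $(d-1)(q+1)$.

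The next step is to verify that no irreducible component of $V(G) \cap V_2'$ is contained in $\Pi$. Since $P \not\in \Pi$, the section $\Pi \cap V_2'$ is an absolutely irreducible plane Hermitian curve of degree $q+1$; if some component $C$ of $V(G) \cap V_2'$ lay in $\Pi$, irreducibility of $\Pi \cap V_2'$ would force $C = \Pi \cap V_2'$, giving $\Pi \cap V_2' \subset V(G)$ and hence $\deg G \ge q+1$, contradicting $\deg G \le q-1$. Consequently the affine variety $X := V(G) \cap V_2' \cap \Pi^C \subset \Pi^C \cong \A^3$ is equidimensional of dimension $1$ and inherits the same degree bound $(d-1)(q+1)$ from its projective closure.

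Applying the affine form of Proposition \ref{lac} with $\Fq$ replaced by $\Fqt$ now yields
$$|X(\Fqt)| \le (d-1)(q+1)\cdot q^2,$$
and since $V(F)(\Fqt) \cap V_2' \cap \Pi^C = X(\Fqt)$, this is precisely the inequality claimed in the lemma. The only delicate point is confirming that the passage from the projective intersection $V(G) \cap V_2'$ to the affine variety $X$ preserves both equidimensionality and the degree bound; this is exactly what the component-containment check above provides, and the rest is a direct application of the preliminaries from Section \ref{sec:prel}.
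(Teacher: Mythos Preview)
Your proof is correct and follows essentially the same route as the paper's: factor out the plane, observe that the residual surface and $V_2'$ share no common component because their degrees differ, conclude that the intersection is an equidimensional curve of degree at most $(d-1)(q+1)$, and apply the affine Lachaud--Rolland bound (Proposition \ref{lac}). Your additional component-containment check---verifying that no irreducible component of $V(G)\cap V_2'$ lies in $\Pi$---makes explicit a point the paper handles by passing directly to the affine chart $\Pi^C$ and asserting the degree of the affine complete intersection; the two arguments are equivalent in content.
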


\begin{proof}
Since $\Pi \subset \V(F)$, it follows that $\V(F) \cap \Pi^C$ is an affine surface of degree at most $d-1$. Furthermore, $V_2' \cap \Pi^C$ is also an irreducible affine surface of degree $q+1$ contained in $\Pi^C$. It follows that $(V(F) \cap \Pi^C)$ and $(V_2' \cap \Pi^C)$ have no common components, and consequently, their intersection is a complete intersection of degree at most $(d-1)(q+1)$. Consequently, $\dim (V_2' \cap V(F) \cap \Pi^C) = 1$ and  
it follows from Proposition \ref{lac}, that $|V(F)(\Fqt) \cap V_2' \cap \Pi^C| \le (d-1)(q+1)q^2$. 
\end{proof}

We are ready to state and prove the main result of this section. 

\begin{theorem}\label{degen}
We have $|\V(F) \cap V_2'| \le d(q+1)q^2 + 1$.  
\end{theorem}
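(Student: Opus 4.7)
The plan is to split into two cases according to whether $V(F)$ contains a plane defined over $\Fqt$ that avoids the vertex $P$ of the cone $V_2'$.

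Suppose first that $V(F)$ contains such a plane $\Pi_0$. Then Lemma \ref{lem32} gives $|\V(F) \cap V_2' \cap \Pi_0^C| \le (d-1)(q+1)q^2$, and since $\Pi_0 \subseteq V(F)$ and $P \notin \Pi_0$, the intersection $V_2' \cap \Pi_0$ is a non-degenerate plane Hermitian curve wholly contained in $\V(F)$, contributing $q^3 + 1$ points. Summing, $|\V(F) \cap V_2'| \le (d-1)(q+1)q^2 + q^3 + 1 \le d(q+1)q^2 + 1$, as desired.

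Assume from now on that no plane of $V(F)$ defined over $\Fqt$ avoids $P$. I decompose $V_2' = \{P\} \sqcup \bigsqcup_{Q \in V_1(\Fqt)} (\ell_Q \setminus \{P\})$, where $\ell_Q$ is the line from $P$ to $Q \in V_1(\Fqt)$ and $|V_1(\Fqt)| = q^3 + 1$. Let $k$ be the number of $Q$ with $\ell_Q \subseteq V(F)$. If $P \notin V(F)$, then $k = 0$ and each $\ell_Q$ meets $V(F)$ in at most $d$ points, so $|\V(F) \cap V_2'| \le (q^3+1)d \le d(q+1)q^2 + 1$ is immediate. If instead $P \in V(F)$, let $m := \mathrm{mult}_P V(F) \ge 1$: the $k$ lines contained in $V(F)$ contribute $q^2$ points off $P$ each, while on each of the remaining $q^3 + 1 - k$ lines the intersection multiplicity at $P$ is at least $m$, so B\'ezout forces at most $d - m$ further points. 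Combining,
\[
|\V(F) \cap V_2'| \le 1 + kq^2 + (q^3 + 1 - k)(d - m).
\]

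To bound $k$, I project from $P$ to $\PP^2$: lines through $P$ on $V_2'$ correspond to points of the non-degenerate Hermitian curve $V_1 \subset \PP^2$, irreducible of degree $q+1$, while lines through $P$ on $V(F)$ lie in the tangent cone at $P$ and correspond to points of a plane curve of degree $m$. Since $V_1$ is absolutely irreducible and $m \le d \le q < q+1$, the two plane curves share no common component, and B\'ezout in $\PP^2$ yields $k \le m(q+1)$. Because the coefficient $q^2 - d + m$ of $k$ in the bound above is positive, the extremal case $k = m(q+1)$ is the worst; substituting and simplifying reduces the desired inequality to $(q+1)(m - d)(m + q - 1) \le 0$, which holds since $1 \le m \le d$ and $q \ge 1$.

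The principal obstacle is the multiplicity-weighted estimate $k \le m(q+1)$: the naive bound $k \le d(q+1)$ (which follows from B\'ezout between $V(F) \cap V_2'$ and any generic plane not through $P$) is not sharp enough to close the inequality, and the hypothesis $d \le q$ enters exactly here to ensure the tangent cone curve cannot contain $V_1$ as a component. The extremal case $m = d$, where $V(F)$ is a cone with vertex $P$ over a plane curve meeting $V_1$ in $d(q+1)$ points, shows that the bound $d(q+1)q^2 + 1$ is sharp.
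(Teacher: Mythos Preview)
Your proof is correct. Both you and the paper begin with the same case split and handle the ``plane avoiding $P$'' case identically via Lemma~\ref{lem32}. In the remaining case, however, the arguments diverge.

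The paper picks an auxiliary point $Q \in \V(F) \cap V_2'$ with $Q \neq P$, chooses a line $\ell$ through $Q$ meeting $V_2'$ only at $Q$, and runs through the book of planes $\B(\ell)$: the unique plane through $P$ meets $V_2'$ in a single line, while each of the remaining $q^2$ planes misses $P$ and so meets $V_2'$ in a nonsingular Hermitian curve, to which Lemma~\ref{lem31} applies. Summing over the book gives the bound directly.

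You instead exploit the cone structure of $V_2'$ head-on, decomposing it into its $q^3+1$ ruling lines through $P$ and controlling the number $k$ of rulings contained in $V(F)$ via the tangent cone of $V(F)$ at $P$: since the degree-$m$ tangent cone projects to a plane curve of degree $m \le d \le q$ that cannot contain the irreducible degree-$(q+1)$ Hermitian curve $V_1$, B\'ezout in $\PP^2$ yields $k \le m(q+1)$, and the remaining algebra closes. Your approach is more intrinsic and makes the extremal configuration transparent (the bound is tight precisely when $m=d$, i.e.\ when $V(F)$ is itself a cone from $P$), at the cost of invoking the multiplicity filtration; the paper's approach is more uniform and uses only the elementary Lemma~\ref{lem31}, but hides where equality can occur.
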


\begin{proof}

\textbf{Case 1: $V(F)$ does not contain any plane \textit{not} passing through $P$.} We may assume that $V(F)$ intersects $V_2'$ at an $\Fqt$-rational point $Q$ other than $P$. 
Let $\ell$ be a line in $\PP^3$ defined over $\Fqt$ that intersects $V_2'$  only at $Q$. 
It is evident that $P \not\in \ell$. 
Then there exists a unique plane, say $\Pi_0$, containing $\ell$ that passes through $P$. Note that $\Pi_0 \cap V_2' = \ell'$, where $\ell'$ is the line joining $P$ and $Q$. Indeed, the line $\ell' \subseteq \Pi_0 \cap V_2'$ since $P, Q \in \Pi_0 \cap V_2'$. 
We write $\B (\ell) = \{\Pi_0, \Pi_1, \dots, \Pi_{q^2} \}$, where $\B (\ell)$ denotes the set of all planes containing $\ell$.  It follows that,
\begin{equation}\label{eq31}
|\V(F) \cap V_2' \cap \Pi_0| \le q^2 + 1.
\end{equation}
Furthermore, the planes $\Pi_1, \dots, \Pi_{q^2}$ are planes that do not contain $P$. As a consequence, $\Pi_j$ intersects $V_2'$ at a nonsingular Hermitian curve for every $j = 1, \dots, q^2$. Since $\Pi_j \not\subseteq V(F)$, we see that $|V(F) \cap \Pi_j|$ is a plane curve of degree $d$ for all $j = 1, \dots, q^2$. From Lemma \ref{lem31}, we have
\begin{equation}\label{eq32}
|\V(F) \cap V_2' \cap \Pi_i| \le d(q+1) \ \text{for} \ i=1, \dots, q^2.
\end{equation}
Note that $V_2'$ intersects $\ell$ only at $Q$. Thus, $|\V(F) \cap V_2' \cap \ell|=1$. We have, 
\begin{align*}
|\V(F) \cap V_2'| &= \sum_{i=0}^{q^2} \left(|\V(F) \cap V_2' \cap \Pi_i| - |\V(F) \cap V_2' \cap \ell| \right) + |\V(F) \cap V_2' \cap \ell| \\
&\le \left((q^2 + 1) - 1\right) + \sum_{i=1}^{q^2} \left( d(q+1) - 1 \right) + 1 \\
&= d(q+1)q^2 + 1.
\end{align*}
The inequality above, of course, follows from Equations \eqref{eq31} and \eqref{eq32}. 

\textbf{Case 2: $V(F)$ contains a plane \textit{not} passing through $P$.} Suppose $V(F)$ contains a plane $\Pi$ not passing through $P$. Then $V_2'$ intersects $\Pi$ at a nonsingular Hermitian curve $V_1$. Since $\Pi \subset \V(F)$, we have $\V(F) \cap V_2' \cap \Pi = V_1$. Note that, 
\begin{align*}
|\V(F) \cap V_2'| & = |\V(F) \cap V_2' \cap \Pi| + |\V(F) \cap V_2' \cap \Pi^C| \\
& = |V_1| + |\V(F) \cap V_2' \cap \Pi^C| \\
& \le q^3 + 1 + (d-1)(q+1)q^2 \\
&< d(q+1)q^2 + 1.
\end{align*}
The inequality above follows from Lemma \ref{lem32}. 
\end{proof}

\begin{remark}\normalfont
It is not difficult to produce a surface of degree $d$ that intersects $V_2'$ at exactly $d(q+1)q^2 + 1$ many $\Fqt$-rational points.  Take a plane $\Pi$ defined over $\Fqt$, not passing through $P$. As noted above, the plane $\Pi$ intersects $V_2'$ at a nonsingular Hermitian curve $V_1$. Take a plane curve $C$ defined over $\Fqt$ of degree $d$ in $\Pi$ that intersects $V_1$ at exactly $d(q+1)$ many $\Fqt$-rational points. It is now evident that the cone over $C$, which is a surface of degree $d$ defined over $\Fqt$ with center at $P$ intersects $V_2'$ at exactly $d(q+1)q^2 + 1$ many $\Fqt$-rational points. While our argument shows that the bound of the above theorem is strict, it does not classify all the surfaces of degree $d$ that attain this bound. This problem could be interesting for further investigation. 
\end{remark}

\section{General results towards Edoukou's conjecture}\label{sec:gen}

Throughout this section, we will denote by $\X$, a threefold of degree $d$ defined over $\Fqt$ contained in $\PP^4$. That is $\X = V(F)$, the set of all points $P \in \PP^4$ such that $F(P) = 0$, for some homogeneous polynomial $F \in \Fqt [x_0, x_1, x_2, x_3, x_4]$ with $\deg F = d$. Since the linear sections of Hermitian threefolds are well-understood, we will be assuming that $1 < d \le q$. Also, as before, the non-degenerate Hermitian threefold will be denoted by $V_3$. 
\begin{proposition}\label{nogen}
    Suppose that $\X$ does not contain a generator. Then 
    $$|\X (\Fqt) \cap V_3| \le d(q^5 + 1).$$ 
    In particular, we have $|\X (\Fqt) \cap V_3| < d(q^5 + q^2) + q^3 + 1.$
\end{proposition}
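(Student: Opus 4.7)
The plan is to prove this by a double counting argument using generators of $V_3$, exploiting the structural facts recorded in Remark \ref{rem0}. The key input is that a generator is a line contained in $V_3$, and by hypothesis no such line is contained in $\X$; hence each generator meets $\X$ in a zero-dimensional scheme of degree at most $d$, and in particular in at most $d$ many $\Fqt$-rational points.

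I would count the set
\[
S = \{(P, \ell) : P \in \X(\Fqt) \cap V_3, \ \ell \text{ is a generator of } V_3 \text{ with } P \in \ell\}
\]
in two different ways. On the one hand, Remark \ref{rem0} tells us that through every $\Fqt$-rational point of $V_3$ there pass exactly $q^3 + 1$ generators defined over $\Fqt$ (these being the lines in $V_3$ through $P$ lying in the tangent hyperplane $T_P(V_3)$), so
\[
|S| = |\X(\Fqt) \cap V_3| \cdot (q^3 + 1).
\]
On the other hand, again by Remark \ref{rem0}, the total number of generators of $V_3$ defined over $\Fqt$ is $(q^5+1)(q^3+1)$, and since no generator is contained in $\X$, each contributes at most $d$ points to $\X(\Fqt)$. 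Therefore
\[
|S| \le d \cdot (q^5 + 1)(q^3 + 1).
\]
Combining the two estimates and dividing by $q^3 + 1$ yields the desired bound $|\X(\Fqt) \cap V_3| \le d(q^5 + 1)$.

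For the second statement I would just check the elementary inequality: since $d \le q$, we have $d - 1 < q^3$, so $d(q^5+1) = dq^5 + d < dq^5 + dq^2 + q^3 + 1 = d(q^5 + q^2) + q^3 + 1$. There is no genuine obstacle here; the only subtlety is that the argument relies essentially on the no-generator hypothesis, which guarantees that $\ell \cap \X$ is finite for every line $\ell \subset V_3$ so that the B\'{e}zout-type bound of $d$ points per generator is valid.
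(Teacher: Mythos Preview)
Your proof is correct and follows essentially the same double-counting argument as the paper: both form the incidence set of pairs (point, generator) and count it using the facts from Remark~\ref{rem0} that every point of $V_3$ lies on $q^3+1$ generators and that there are $(q^5+1)(q^3+1)$ generators in total. The only addition you make is an explicit check of the final strict inequality, which the paper leaves implicit.
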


\begin{proof}
    Let us consider the incidence set 
    $$I = \{ (P, \ell) : P \in \X(\Fqt) \cap V_3, \ell \ \text{is a generator}, P \in \ell\}.$$
    We will count $|I|$ in two ways. First, we see that,
    \begin{equation}\label{eq1}
    |I| = \sum_{P \in \X(\Fqt) \cap V_3} \# \{\ell : P \in \ell \} 
    = |\X(\Fqt) \cap V_3| (q^3 + 1).
    \end{equation}
    The fact that $\# \{\ell : P \in \ell, \ell \ \text{is a generator of } \ V_3\} = q^3 + 1$ is already noted in Remark \ref{rem0}. On the other hand, using the fact that $V_3$ contains $(q^5 + 1) (q^3 + 1)$ generators, we have
    \begin{equation}\label{eq2}
    |I| = \sum_{\ell \subset V_3} |\X (\Fqt) \cap \ell| \le d (q^5 + 1) (q^3 + 1).
    \end{equation}
    It follows from Equations \eqref{eq1} and \eqref{eq2} that 
    $|\X(\Fqt) \cap V_3| \le d (q^5 + 1)$.
\end{proof}

It is now natural to proceed to determine an upper bound for $|\X(\Fqt) \cap V_3|$ in the case when $\X$ contains a generator of $V_3$, but no planes in $\PP^4$ defined over $\Fqt$. However, we do not have a good bound in the general case and as a consequence, the complete solution to Conjecture \ref{EdC} remains open. We will present some partial results under this hypothesis later in this section and  we will study this case for cubic threefolds in the subsequent section. For the time being, let us deal with the case when $\X$ contains a plane in $\PP^4$ defined over $\Fqt$, but it does not contain a hyperplane in $\PP^4$ defined over $\Fqt$. 

\begin{proposition}\label{plane}
    Suppose $\X$ contains a plane defined over $\Fqt$, but no hyperplane defined over $\Fqt$. Then 
    $$|\X(\Fqt) \cap V_3| \le (d-1) q^5 + (d-1)q^4 + dq^3 + dq^2 + 1.$$
    In particular, $|\X(\Fqt) \cap V_3| < d(q^5 + q^2) + q^3 + 1.$
\end{proposition}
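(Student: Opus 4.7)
The plan is to slice $\PP^4$ by the pencil of $q^2+1$ hyperplanes through $\Pi$ and apply the Hermitian surface bounds from Section \ref{sec:prel} in each one. Write $\Pi = V(L_1, L_2)$ for linear forms $L_1, L_2 \in \Fqt[x_0, \dots, x_4]$. Since $F$ vanishes on $\Pi$, the Nullstellensatz gives $F = L_1 G_1 + L_2 G_2$ for some $G_1, G_2$ of degree $d-1$. The hyperplanes through $\Pi$ form the pencil $\Sigma_{[\alpha:\beta]} = V(\alpha L_1 + \beta L_2)$, $[\alpha:\beta] \in \PP^1(\Fqt)$; they cover $\PP^4(\Fqt)$ with pairwise intersection equal to $\Pi$. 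Because $\X$ contains no hyperplane, for every $\Sigma$ in the pencil $\X \cap \Sigma$ is a surface of degree $d$ in $\Sigma \cong \PP^3$ with $\Pi$ as a component, and writing (up to a unit) $F|_\Sigma = L_\Pi \cdot (\alpha G_2 - \beta G_1)|_\Sigma$ we get a decomposition $\X \cap \Sigma = \Pi \cup Y_\Sigma$, where $Y_\Sigma = V\bigl((\alpha G_2 - \beta G_1)|_\Sigma\bigr)$ is a surface of degree $d-1$ in $\Sigma$.

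Next I would establish the counting identity
\[
|\X(\Fqt) \cap V_3| \;=\; \sum_{\Sigma} N_\Sigma \;-\; q^2\,|B^{\ast}|, \qquad N_\Sigma := |Y_\Sigma(\Fqt) \cap V_3 \cap \Sigma|,
\]
where the sum runs over the $q^2+1$ hyperplanes in the pencil and $B^{\ast} := V(G_1, G_2) \cap \Pi \cap V_3$ is the base locus of the pencil cut out on the Hermitian section in $\Pi$. This is a direct double count: a point $R \in \X(\Fqt) \cap V_3 \setminus \Pi$ lies in a unique $\Sigma$, hence in a unique $Y_\Sigma$; a point $R \in (\Pi \cap V_3) \setminus B^{\ast}$ has $(G_1(R), G_2(R)) \ne (0,0)$, so it belongs to $Y_\Sigma$ for precisely the $[\alpha:\beta] = [G_1(R):G_2(R)]$; and a point $R \in B^{\ast}$ lies in every $Y_\Sigma$, contributing $q^2+1$ copies. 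In particular $|\X(\Fqt) \cap V_3| \le \sum_\Sigma N_\Sigma$.

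To bound each $N_\Sigma$, note that by Theorem \ref{hyperplane} the section $V_3 \cap \Sigma$ is either a non-degenerate Hermitian surface (if $\Sigma$ is non-tangent to $V_3$) or a rank-$3$ Hermitian cone (if $\Sigma$ is tangent). Since $\deg Y_\Sigma = d-1 < q+1$ and $V_3 \cap \Sigma$ is irreducible, they intersect properly, so Theorem \ref{SoB} (in the non-tangent case) and Theorem \ref{degen} (in the tangent case) give
\[
N_\Sigma \le \begin{cases} (d-1)(q^3+q^2-q)+q+1 & \text{if $\Sigma$ is not tangent to $V_3$,}\\ (d-1)(q+1)q^2 + 1 & \text{if $\Sigma$ is tangent to $V_3$.} \end{cases}
\]
Letting $t$ be the number of tangent $\Sigma$ in the pencil and summing, a direct computation yields
\[
\sum_{\Sigma} N_\Sigma \le (d-1)q^5 + (d-1)q^4 + q^3 + dq^2 + (d-2)(t-1)q + 1.
\]
The trivial estimate $t \le q^2+1$ then produces $|\X(\Fqt) \cap V_3| \le (d-1)q^5 + (d-1)q^4 + (d-1)q^3 + dq^2 + 1$, which is strictly stronger than the claimed bound (with a slack of $q^3$). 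The bound on $d(q^5+q^2)+q^3+1$ in the ``In particular'' part then follows from an elementary comparison using $d \le q$.

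The one delicate point I foresee is the justification of the identity in degenerate situations where some $Y_\Sigma$ contains $\Pi$ (equivalently, when $G_1, G_2$ are proportional modulo $(L_1, L_2)$, or when $F \in (L_1, L_2)^2$). In such cases the pencil acquires a positive-dimensional base locus on $\Pi$, but the same double-counting goes through with a larger $|B^{\ast}|$, so the inequality $|\X(\Fqt) \cap V_3| \le \sum_\Sigma N_\Sigma$ is preserved and the conclusion follows.
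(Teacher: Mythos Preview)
Your argument is correct and shares the same skeleton as the paper's proof: both slice $\PP^4$ by the pencil $\B'(\Pi)$ of $q^2+1$ hyperplanes through $\Pi$ and bound the contribution of each slice. The implementations, however, differ. The paper simply writes
\[
|\X(\Fqt)\cap V_3| \;=\; |V_3\cap\Pi| \;+\; \sum_{\Sigma\in\B'(\Pi)} |\X(\Fqt)\cap V_3\cap(\Sigma\setminus\Pi)|,
\]
bounds $|V_3\cap\Pi|\le q^3+q^2+1$, and handles each affine slice with the crude Lachaud--Rolland estimate (Proposition~\ref{lac}): since $\Sigma\not\subset\X$, the set $\X\cap V_3\cap(\Sigma\setminus\Pi)$ is an affine curve of degree at most $(d-1)(q+1)$, hence has at most $(d-1)(q+1)q^2$ points over $\Fqt$. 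Summing gives exactly $(d-1)q^5+(d-1)q^4+dq^3+dq^2+1$.

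You instead factor $\X\cap\Sigma=\Pi\cup Y_\Sigma$ explicitly via $F=L_1G_1+L_2G_2$, run a pointwise double count to get $|\X(\Fqt)\cap V_3|\le\sum_\Sigma N_\Sigma$, and bound each $N_\Sigma$ by the sharper Hermitian-surface results (Theorem~\ref{SoB} in the non-tangent case, Theorem~\ref{degen} in the tangent case). This costs more bookkeeping but produces a bound that is $q^3$ smaller than the one stated. Two remarks: first, the ``delicate point'' you flag is harmless, since for $R\in\Pi$ the condition $R\in Y_\Sigma$ is decided pointwise by $\alpha G_2(R)-\beta G_1(R)=0$, so the identity $\sum_\Sigma N_\Sigma=|\X(\Fqt)\cap V_3|+q^2|B^\ast|$ holds verbatim even when some $Y_\Sigma$ contains $\Pi$; second, for the trivial estimate on $t$ you should note that when $d=2$ the coefficient $(d-2)q$ vanishes, so no upper bound on $t$ is needed there. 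In short, the paper's route is quicker and uses only Proposition~\ref{lac}, while yours invests Theorems~\ref{SoB} and~\ref{degen} for a numerical gain that the proposition does not require.
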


\begin{proof}
    Let $\Pi$ be a plane defined over $\Fqt$ contained in $\X$. Let us denote
    $$\B'(\Pi) := \{\Sigma: \Pi \subset \Sigma, \Sigma \ \text{is a hyperplane defined over} \ \Fqt \subset \PP^4\}.$$
    Note that for any plane $\Pi \subset \PP^4$ defined over $\Fqt$, we have
    $|V_3 \cap \Pi| \le q^3 + q^2 + 1.$
    Also, for any $\Sigma \in \B' (\Pi)$, using Proposition \ref{lac}, and the fact that $\Sigma \not\subset \X$, we may conclude that $$|\X (\Fqt) \cap V_3 \cap (\Sigma \setminus \Pi)| \le (d-1)(q+1)q^2.$$ 
    We have,
    \begin{align*}
      |\X(\Fqt) \cap V_3|  &= |\X(\Fqt) \cap V_3 \cap \Pi| + \sum_{\Sigma \in \B' (\Pi)} |\X(\Fqt) \cap V_3 \cap (\Sigma \setminus \Pi)| \\
      &\le q^3 + q^2 + 1 + (q^2 + 1) (d-1)(q+1)q^2 \\
      &= (d-1)q^5 + (d-1)q^4 + dq^3 + dq^2 + 1.
    \end{align*}
The last inequality follows trivially since $d \le q$.    
\end{proof}

As mentioned above, it is difficult to find good bounds for $|\X (\Fqt) \cap V_3|$ when $\X$ contains a generator of $V_3$ but does not contain a plane defined over $\Fqt$. Some partial results to this end are given below. 

\begin{proposition}\label{dgen}
    Suppose that $\X$ contains no plane in $\PP^4$ defined over $\Fqt$. If there exists a plane $\Pi_0$ in $\PP^4$ defined over $\Fqt$ such that $\Pi_0 \cap \X$ has $d$ generators of $V_3$, then 
    $$|\X(\Fqt) \cap V_3|\le dq^5 - q^4 + dq^3 + dq^2 + 1< d(q^5 + q^2) + q^3 + 1.$$
\end{proposition}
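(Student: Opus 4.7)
The plan is to decompose $\X\cap V_3$ along the book of hyperplanes of $\PP^4$ containing $\Pi_0$. First I would analyze the plane $\Pi_0$ itself. Since $\Pi_0\cap\X$ is a plane curve of degree $d$ that contains the $d$ lines $\ell_1,\ldots,\ell_d$, B\'ezout's theorem in $\Pi_0$ forces $\Pi_0\cap\X=\ell_1\cup\cdots\cup\ell_d$. As this curve contains $d\geq 2$ distinct lines, the trichotomy for $\Pi\cap V_3$ recalled in Section~\ref{sec:prel} rules out the non-degenerate Hermitian curve and the single-line cases, leaving $\Pi_0\cap V_3$ a union of $q+1$ lines through a common point $P$. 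In particular, all the $\ell_i$ pass through $P$, $\Pi_0\subset T_P(V_3)$, and $|\X(\Fqt)\cap V_3\cap\Pi_0|=dq^2+1$.

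Now I would consider the book $\B'(\Pi_0)$ of the $q^2+1$ hyperplanes of $\PP^4$ containing $\Pi_0$, as introduced in the proof of Proposition~\ref{plane}. A key structural observation is that $T_P(V_3)$ is the \emph{unique} tangent hyperplane in $\B'(\Pi_0)$: if a tangent hyperplane $T_Q(V_3)$ contains $\Pi_0$, then each $\ell_i$ lies in the cone $T_Q(V_3)\cap V_3$ with vertex $Q$, which forces $Q\in\ell_i$ for every $i$; since the $d\geq 2$ distinct lines $\ell_i$ meet only at $P$, we conclude $Q=P$.

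For the tangent hyperplane $T_P(V_3)$, the intersection $T_P(V_3)\cap V_3$ is a rank-$3$ Hermitian surface in $T_P(V_3)\cong\PP^3$, so Theorem~\ref{degen} gives
\[
|\X(\Fqt)\cap V_3\cap T_P(V_3)|\leq d(q+1)q^2+1=dq^3+dq^2+1.
\]
For each of the remaining $q^2$ non-tangent hyperplanes $\Sigma\in\B'(\Pi_0)$, $V_2:=\Sigma\cap V_3$ is a non-degenerate Hermitian surface in $\Sigma$, $\Pi_0$ is tangent to $V_2$ at $P$, and $\X\cap\Sigma$ is a surface of degree $d$ in $\Sigma$ containing $\ell_1,\ldots,\ell_d$. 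Because $\X$ contains no plane defined over $\Fqt$, neither does $\X\cap\Sigma$, so Theorem~\ref{SoB} applies with strict inequality via the remark following its statement. To convert this into the precise bound needed, I would further decompose $\X\cap V_2$ using the book $\B_\Sigma(\ell_1)$ of planes in $\Sigma$ through the generator $\ell_1$: every plane in $\B_\Sigma(\ell_1)$ is tangent to $V_2$ at some point of $\ell_1$, the plane $\Pi_0$ contributes exactly $(d-1)q^2$ points outside $\ell_1$, and Theorem~\ref{D}, applied to $\X\cap\Sigma\subset\PP^3$ at each $Q\in\ell_1$, controls the number of additional generators of $V_2$ through $Q$ that $\X$ may contain.

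Combining via inclusion-exclusion (each point of $\X\cap V_3\cap\Pi_0$ is counted in all $q^2+1$ hyperplanes, and each point of $\X\cap V_3\setminus\Pi_0$ in exactly one) yields
\[
|\X(\Fqt)\cap V_3|\;=\;\sum_{\Sigma\in\B'(\Pi_0)}|\X(\Fqt)\cap V_3\cap\Sigma|\;-\;q^2(dq^2+1),
\]
and substituting the per-hyperplane bounds should produce the desired $dq^5-q^4+dq^3+dq^2+1$. The appended strict inequality of the proposition reduces to $(d-1)q^3<q^4$, which holds for $d\leq q$.

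The hard part will be the non-tangent contribution. A bare use of Theorem~\ref{SoB} falls short by roughly $q^2(q-d+1)$ per hyperplane, so a substantive refinement is needed. The delicate case is~(b) of Theorem~\ref{D}: if $\X\cap\Sigma$ is a cone with vertex at some point of $\ell_1$, such a cone can in principle carry many generators of $V_2$ through its vertex, and it is here that the no-plane hypothesis on $\X$ must be exploited most carefully to rule out configurations that would otherwise saturate S{\o}rensen's bound.
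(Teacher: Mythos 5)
Your decomposition is exactly the one the paper uses: the analysis of $\Pi_0$ (forcing $\Pi_0\cap\X$ to be the union of the $d$ generators through a common point $P$, hence $|\X(\Fqt)\cap V_3\cap\Pi_0|=dq^2+1$), the uniqueness of the tangent hyperplane $T_P(V_3)$ in $\B'(\Pi_0)$, the bound $d(q+1)q^2+1$ for the tangent hyperplane via Theorem~\ref{degen}, and the final inclusion--exclusion over the book are all correct and match the paper. The arithmetic check you give for the target per-hyperplane contribution is also right: to reach $dq^5-q^4+dq^3+dq^2+1$ one needs $|\Sigma\cap\X(\Fqt)\cap V_3|\le dq^3+(d-1)q^2+1$ for each of the $q^2$ non-tangent hyperplanes $\Sigma$.

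That last bound is precisely where your argument has a genuine gap, and you have honestly flagged it yourself. Invoking Theorem~\ref{SoB} ``with strict inequality'' only improves S{\o}rensen's bound by $1$, giving $d(q^3+q^2-q)+q$, which exceeds what you need by $q(q-d+1)$ per hyperplane, i.e.\ by $q^3(q-d+1)$ in total --- far too much. Your proposed repair, decomposing $\X\cap\Sigma$ along $\B_\Sigma(\ell_1)$ and controlling generators through points of $\ell_1$ via Theorem~\ref{D}, is only a plan: as you note, case~(b) of Theorem~\ref{D} (a cone inside $\X\cap\Sigma$) is not ruled out and could a priori carry many generators of $\Sigma\cap V_3$, so the estimate is never actually established. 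The paper closes this gap not by such a refinement but by citing a stronger external result, namely Theorem~5.1 of Beelen--Datta--Homma, which gives the \emph{second-highest} possible value $dq^3+(d-1)q^2+1$ for the intersection of a degree-$d$ surface with a non-degenerate Hermitian surface whenever the surface is not a union of $d$ tangent planes --- a hypothesis guaranteed here because $\X$ contains no plane. Without that input (or a complete substitute for it), your proof does not go through.
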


\begin{proof}
    Let $\Pi_0 \cap V_3$ contain $d$ generators of $V_3$. Since a plane containing a generator of $V_3$ can intersect $V_3$ at either one line or $q+1$ concurrent lines, it follows that $V_3 \cap \Pi_0$  is a union of $q+1$ lines $\ell_0, \dots, \ell_q$ passing through a common point, say $P \in V_3$. It is also easy to see that $\B' (\Pi_0)$ contains exactly one hyperplane that is tangent to $V_3$, namely, the tangent hyperplane to $V_3$ at $P$. Indeed, if $\B'(\Pi)$ contains a tangent hyperplane $T_Q(V_3)$ where $Q \neq P$, then $\Pi_0 = T_P(V_3) \cap T_Q(V_3)$ and hence the lines $\ell_0, \dots, \ell_q$ are contained in $T_Q(V_3)$. Consequently, the $q+1$ lines meet at the point $Q$, implying $P = Q$, a contradiction. \ 
    If $\Sigma = T_P (V_3)$, then $\X \cap \Sigma$ is a surface of degree $d$ in $\Sigma$. It now follows from Theorem \ref{degen} that
     $$|\X(\Fqt) \cap V_3 \cap T_P (V_3)| \le d(q+1)q^2 + 1.$$ 
     If $\Sigma \in \B' (\Pi_0)$, but $\Sigma \neq T_P (V_3)$, we conclude from Theorem \ref{hyperplane} (b), that $\Sigma \cap V_3$ is a non-degenerate Hermitian surface in $\Sigma$. Now Theorem \ref{SoB} yields 
    $$|\Sigma \cap \X(\Fqt)  \cap V_3| \le d(q^3 + q^2 - q) + q + 1.$$
    However, the upper bound above is attained if and only if $\Sigma \cap \X$ contains $d$ planes that are tangent to $V_3$. Since, in our case, the threefold $\X$ does not contain a plane, a strict bound \cite[Theorem 5.1]{BDH} applies and we have 
    $$|\Sigma \cap \X(\Fqt)  \cap V_3| \le dq^3 + (d-1)q^2 + 1.$$
    Finally, $\X(\Fqt)$ does not contain $\Pi_0$ and $\X \cap \Pi_0$ contains the union of $d$ generators. It follows that $\X \cap \Pi_0$ is exactly the union of $d$ generators of $V_3$. Thus
    $$|\X(\Fqt) \cap V_3 \cap \Pi_0| = dq^2 + 1.$$ 
    We may thus conclude that, if $\Sigma \in \B'(\Pi_0)$ and $\Sigma \neq T_P (V_3)$, then 
    $$|\X(\Fqt) \cap V_3 \cap (\Sigma \setminus \Pi_0)| \le dq^3 + (d-1)q^2 + 1 - (dq^2 + 1) = dq^3 - q^2.$$
    Following the same argument as in Proposition \ref{plane}, we see that
    \begin{align*}
       |\X(\Fqt) \cap V_3|  
       &=  |\X(\Fqt) \cap V_3 \cap T_P (V_3) | + \sum_{\substack{\Sigma \in \B' (\Pi_0) \\ \Sigma \neq T_P (V_3)}} |\X(\Fqt) \cap V_3 \cap (\Sigma \setminus \Pi_0)| \\
       &\le d(q+1)q^2 + 1 +  q^2 (d q^3 - q^2) \\
       &= dq^5 - q^4 + dq^3 + dq^2 + 1
    \end{align*}
    The last strict inequality in the assertion follows trivially since $d \le q$.  
\end{proof}

\begin{proposition}\label{atmost1gen}
   Suppose that $\X$ contains no plane in $\PP^4$ defined over $\Fqt$. Furthermore, assume that $\X \cap \Pi$ contains at most one generator for any plane $\Pi$ defined over $\Fqt$ in $\PP^4$. Then 
    $$|\X(\Fqt) \cap V_3| \le (d-1)q^5 + q^2 + (d-1)q^3 + (d-1)q + 1< d(q^5 + q^2) + q^3 + 1.$$
\end{proposition}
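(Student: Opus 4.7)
The plan is to fix a generator $\ell$ of $V_3$ contained in $\X$ and to decompose $\X(\Fqt) \cap V_3$ via the book of planes $\B(\ell)$ through $\ell$, which has cardinality $q^4 + q^2 + 1$. (Under the setting of this section $\X$ already contains a generator; if it does not, Proposition \ref{nogen} applies.) Since any $\Fqt$-point $Q \in V_3 \setminus \ell$ determines a unique plane of $\B(\ell)$, namely the one spanned by $\ell$ and $Q$, one has the clean identity
\[
|\X(\Fqt) \cap V_3| \;=\; |\ell(\Fqt)| \;+\; \sum_{\Pi \in \B(\ell)} \bigl|\X(\Fqt) \cap V_3 \cap (\Pi \setminus \ell)\bigr|.
\]

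For each $\Pi \in \B(\ell)$, the Hermitian section $\Pi \cap V_3$ contains $\ell$, so by the trichotomy recalled in Section \ref{sec:prel} it must be either $\ell$ itself or a union $\ell \cup \ell_1 \cup \cdots \cup \ell_q$ of $q+1$ generators through a common point $P \in \ell$; the nonsingular plane Hermitian curve case is ruled out because such a curve contains no $\Fqt$-line. The first case contributes nothing beyond $\ell$. In the second, the hypothesis that $\X \cap \Pi$ contains at most one generator forces $\ell_i \not\subset \X$ for each $i = 1, \ldots, q$, and since $\X$ contains no plane we also have $\Pi \not\subset \X$, so $\X \cap \Pi$ is a plane curve of degree $d$ in $\Pi$. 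B\'ezout's theorem in $\Pi$ therefore gives $|\ell_i \cap \X| \le d$; since $P \in \ell_i \cap \ell \subset \X$ and the lines $\ell_i$ meet pairwise only at $P$, this yields
\[
\bigl|\X(\Fqt) \cap V_3 \cap (\Pi \setminus \ell)\bigr| \;=\; \sum_{i=1}^q \bigl|\ell_i(\Fqt) \cap \X \setminus \{P\}\bigr| \;\le\; q(d-1).
\]

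To conclude, I would apply this estimate uniformly to every plane in $\B(\ell)$ (accepting slack on the at most one plane of the first type, whose contribution is in fact zero):
\[
|\X(\Fqt) \cap V_3| \;\le\; (q^2+1) + (q^4+q^2+1)\, q(d-1) \;=\; (d-1)q^5 + q^2 + (d-1)q^3 + (d-1)q + 1.
\]
The strict inequality against $d(q^5 + q^2) + q^3 + 1$ then reduces to the positivity of $q^5 - (d-2)q^3 + (d-1)q^2 - (d-1)q$, which is immediate for $2 \le d \le q$ since $q^5$ dominates. The argument is essentially bookkeeping once the book-of-planes decomposition is in place; the only points that require care are ruling out the nonsingular Hermitian curve section (which follows from $\ell \subset \Pi \cap V_3$) and correctly subtracting the common point $P$ of the $\ell_i$ so that B\'ezout delivers $d-1$ new points per line rather than $d$. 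I do not foresee any deeper obstacle.
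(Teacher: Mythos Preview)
Your proof is correct and follows essentially the same route as the paper: fix a generator $\ell \subset \X$, decompose via the book $\B(\ell)$, and bound each $|\X(\Fqt) \cap V_3 \cap (\Pi \setminus \ell)|$ by $(d-1)q$. The only cosmetic difference is that the paper obtains $(d-1)q$ by applying B\'ezout once to the affine curves $\X \cap (\Pi \setminus \ell)$ and $V_3 \cap (\Pi \setminus \ell)$ of degrees $d-1$ and $\le q$, whereas you apply B\'ezout line by line to $\ell_i$ against $\X \cap \Pi$ and subtract the common point $P$; also, your parenthetical that there is ``at most one plane of the first type'' is not quite right (there can be several planes $\Pi \in \B(\ell)$ with $\Pi \cap V_3 = \ell$), but since you bound those contributions by $(d-1)q$ anyway this has no effect on the argument.
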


\begin{proof}
    Suppose that for all plane $\Pi$ defined over $\Fqt$ in $\PP^4$, $\X \cap \Pi$ does not contain any generator. This in particular implies that $\X$ does not contain any generator. Then Proposition \ref{nogen} applies and proves the assertion. Thus we may assume that $\X$ contains a generator $\ell$ of $V_3$.

 \noindent   \textbf{Claim:} For every $\Pi \in \B (\ell)$, we have $|\X \cap V_3 \cap (\Pi \setminus \ell)| \le (d-1)q$. 

\noindent    \textit{Proof of claim:} Note that, for any $\Pi \in \B (\ell)$ the intersection
 $\Pi \cap V_3$ consists of at most $q+1$ generators. Since $\X \cap V_3$ contains no generators of $V_3$ other than $\ell$, it follows that $\X \cap (\Pi \setminus \ell)$ and $V_3 \cap (\Pi \setminus \ell)$ are plane curves of degrees $d-1$ and at most $q$ respectively. Furthermore, the two affine curves have no common components. Using B\'{e}zout's Theorem, we now conclude that 
        $|\X \cap V_3 \cap (\Pi \setminus \ell)| = |\left(\X \cap (\Pi \setminus \ell)\right) \cap \left(V_3 \cap (\Pi \setminus \ell) \right)| \le (d-1)q.$ 
Thus
\begin{align*}
    |\X(\Fqt) \cap V_3| &= |\ell (\Fqt)| + \sum_{\Pi \in \B (\ell)}|\X(\Fqt) \cap V_3 \cap (\Pi \setminus \ell)| \\
    &\le (q^2 + 1) + (q^4 + q^2 + 1) (d-1)q  \\
    &= (d-1)q^5 + q^2 + (d-1)q^3 + (d-1)q + 1 \\
    &< d(q^5 + q^2) + q^3 + 1.
\end{align*}
The first inequality follows from the claim, whereas the strict inequality is an immediate consequence of the hypothesis $d \le q$. 
\end{proof}

Before proceeding further, we show that the results derived above are enough to prove Edoukou's bound. To this end, let us first prove the following Lemma. 

\begin{lemma}\label{ed2}
    If $\Sigma_1$ be a hyperplane in $\PP^4$ defined over $\Fqt$ that is not a tangent to $V_3$, then for any hyperplane $\Sigma$ in $\PP^4$ defined over $\Fqt$, we have $|\Sigma \cap \Sigma_1 \cap V_3| \ge q^3 + 1$. 
\end{lemma}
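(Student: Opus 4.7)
The plan is to reduce the inequality to the classification of linear sections of Hermitian varieties, invoking Theorem \ref{hyperplane} twice: once with $m=4$ applied to $V_3 \subset \PP^4$, and then once with $m=3$ applied to the Hermitian surface cut out by $\Sigma_1$. Since $\Sigma_1$ is non-tangent to $V_3$, part (b) of Theorem \ref{hyperplane} identifies $V_2 := \Sigma_1 \cap V_3$ as a non-degenerate Hermitian surface in $\Sigma_1 \cong \PP^3$. Because $\Sigma \cap \Sigma_1 \cap V_3 = (\Sigma \cap \Sigma_1) \cap V_2$, the problem becomes a statement about intersecting $V_2$ with a linear subspace of $\Sigma_1$.

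I would then split into two cases depending on whether $\Sigma$ equals $\Sigma_1$ or not. If $\Sigma = \Sigma_1$, the intersection is all of $V_2$, and a non-degenerate Hermitian surface in $\PP^3(\Fqt)$ carries $(q^2+1)(q^3+1) = q^5 + q^3 + q^2 + 1$ rational points, far more than $q^3+1$. If $\Sigma \ne \Sigma_1$, then $\Pi := \Sigma \cap \Sigma_1$ is a plane in $\PP^4$ contained in $\Sigma_1$, hence a hyperplane of $\Sigma_1 \cong \PP^3$. Applying Theorem \ref{hyperplane} to the non-degenerate surface $V_2$ inside $\Sigma_1$ yields exactly two sub-cases: either $\Pi$ is tangent to $V_2$ at some point $P$, so $\Pi \cap V_2$ is a cone with vertex $P$ over a non-degenerate Hermitian $V_0$ (a set of $q+1$ points on a line), i.e., a union of $q+1$ concurrent lines with $q^3 + q^2 + 1$ rational points; or $\Pi$ is non-tangent, in which case $\Pi \cap V_2$ is a non-degenerate plane Hermitian curve with exactly $q^3+1$ rational points. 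In both sub-cases $|\Sigma \cap \Sigma_1 \cap V_3| \ge q^3+1$, with equality attained in the latter.

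No serious obstacle is anticipated. The only subtlety worth flagging is that the classification listed after Remark \ref{rem0} allows a third possibility for plane sections of $V_3$, namely a plane meeting $V_3$ in a single line with only $q^2+1$ points, which would defeat the inequality. The non-tangency hypothesis on $\Sigma_1$ is precisely what excludes this: for a plane $\Pi \subset \Sigma_1$, the Hermitian form on $\Pi$ is inherited from the already non-degenerate (rank $4$) form on $\Sigma_1$, so restricting from a $4$-dimensional subspace to a $3$-dimensional subspace can drop the rank by at most one. The restricted form on $\Pi$ therefore has rank at least $2$, placing $\Pi$ automatically in one of the two sub-cases above and ruling out the single-line outcome.
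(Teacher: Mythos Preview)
Your argument is correct and follows essentially the same route as the paper's own proof: reduce via Theorem \ref{hyperplane}(b) to a non-degenerate Hermitian surface $V_2 \subset \Sigma_1$, split on whether $\Sigma = \Sigma_1$, and in the case $\Sigma \ne \Sigma_1$ observe that a plane section of $V_2$ is either a nonsingular Hermitian curve or $q+1$ concurrent lines, yielding at least $q^3+1$ points. Your extra remark about the rank dropping by at most one is a nice explicit justification for why the single-line case cannot occur, which the paper leaves implicit.
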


\begin{proof}
    If $\Sigma_1 = \Sigma$, then $|\Sigma \cap \Sigma_1 \cap V_3| = |\Sigma_1 \cap V_3| = q^5 +q^3+ q^2 + 1 > q^3 + 1$. We may thus assume that $\Sigma \neq \Sigma_1$. Thus $\Sigma \cap \Sigma_1$ is a plane contained in the projective space $\Sigma_1$. Since $\Sigma_1$ is not a tangent to $V_3$, we see that $V_3 \cap \Sigma_1$ is a non-degenerate Hermitian surface in $\Sigma_1$. Note that $\Sigma \cap \Sigma_1$ is a plane defined over $\Fqt$ contained in $\Sigma_1$. We know that a planar section of a nondegenerate Hermitian surface is either a plane nonsingular Hermitian curve or a union of $q+1$ lines passing through a common point. Thus $|\Sigma \cap \Sigma_1 \cap V_3| \ge q^3 + 1$, proving the assertion. 
\end{proof}

Now, we are ready to prove Edoukou's result on the intersection of a non-degenerate Hermitian threefold and a quadric threefold in $\PP^4$.

\begin{corollary}[Edoukou's Theorem]\cite[Theorem 4.1]{E1}\label{Eprove}
   Let $Q$ be a quadric threefold defined over $\Fqt$ in $\PP^4$. Then 
   $$|V_3 \cap Q (\Fqt)| \le 2(q^5 + q^2) + q^3 + 1.$$
   The bound is attained by a quadric $Q$ defined over $\Fqt$ if and only if $Q$ is a union of two hyperplanes $\Sigma_1$ and $\Sigma_2$ defined over $\Fqt$, both not tangents to $V_3$, such that the plane $\Sigma_1 \cap \Sigma_2$ intersects $V_3$ at a non-degenerate plane Hermitian curve. 
\end{corollary}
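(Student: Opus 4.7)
The plan is to partition quadric threefolds $Q \subset \PP^4$ defined over $\Fqt$ into three structural cases and handle each with the tools of Section~\ref{sec:gen}: (i) $Q$ is absolutely irreducible; (ii) $Q$ is irreducible over $\Fqt$ but splits over $\overline{\FF}_q$ into a pair of conjugate hyperplanes; (iii) $Q$ is a union of two (possibly equal) hyperplanes defined over $\Fqt$. In each case I establish the upper bound and track when equality can arise.

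For case (i), $Q$ is absolutely irreducible of degree $2$, hence cannot contain a hyperplane. I would then subdivide further according to whether $Q$ contains a plane defined over $\Fqt$, a generator of $V_3$, or a plane $\Pi_0$ with $\Pi_0 \cap Q$ containing two generators of $V_3$, and invoke Propositions \ref{plane}, \ref{nogen}, \ref{dgen}, \ref{atmost1gen} respectively with $d=2$. A short arithmetic check shows that each resulting bound is strictly less than $2(q^5+q^2)+q^3+1$ for $q \ge 2$, so equality cannot occur in this case. For case (ii), Proposition \ref{lac1} identifies $Q(\Fqt)$ with $\Sing(Q)(\Fqt)$; since $\Sing(Q)$ is the common linear space of the two conjugate hyperplane components, it is a plane defined over $\Fqt$, and we get $|Q(\Fqt) \cap V_3| \le q^3+q^2+1$, again strictly below the target.

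Case (iii) is the decisive one. Writing $Q = \Sigma_1 \cup \Sigma_2$, inclusion--exclusion yields
\[
|Q(\Fqt)\cap V_3| = |\Sigma_1\cap V_3| + |\Sigma_2\cap V_3| - |(\Sigma_1\cap \Sigma_2)\cap V_3|.
\]
The degenerate sub-case $\Sigma_1 = \Sigma_2$ trivially gives at most $q^5+q^3+q^2+1$. When $\Sigma_1 \neq \Sigma_2$, I would tabulate every remaining possibility using Remark \ref{rem0}: each $\Sigma_i$ is either tangent to $V_3$ ($|\Sigma_i \cap V_3| = q^5+q^2+1$) or non-tangent ($|\Sigma_i \cap V_3| = q^5+q^3+q^2+1$), while the plane $\Sigma_1 \cap \Sigma_2$ meets $V_3$ in a non-degenerate Hermitian curve ($q^3+1$ points), a union of $q+1$ concurrent lines ($q^3+q^2+1$ points), or a single line ($q^2+1$ points). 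Lemma \ref{ed2} forbids the single-line possibility whenever at least one $\Sigma_i$ is non-tangent. A direct computation over the surviving configurations shows that the unique maximum $2(q^5+q^2)+q^3+1$ is attained precisely when both $\Sigma_i$ are non-tangent and $\Sigma_1 \cap \Sigma_2$ meets $V_3$ in a non-degenerate Hermitian plane curve, i.e.\ Type I; every other configuration yields a strictly smaller total.

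The main obstacle is essentially organisational: ensuring completeness of the case split in (iii) and correctly using Lemma \ref{ed2} to discard the geometrically forbidden combinations before pinning down Type I as the unique extremal configuration. Nothing in the plan is computationally hard; everything reduces to the enumeration of linear sections of $V_3$ recorded in Remark \ref{rem0} and the upper bounds developed in Section~\ref{sec:gen}.
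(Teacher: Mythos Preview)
Your proposal is correct and follows essentially the same strategy as the paper's proof: the irreducible case is dispatched via Propositions~\ref{nogen}, \ref{plane}, \ref{dgen}, and \ref{atmost1gen} with $d=2$, while the reducible case is handled by inclusion--exclusion together with Lemma~\ref{ed2}. The only cosmetic differences are that (a) your separate case~(ii) is unnecessary, since a quadric irreducible over $\Fqt$ but not absolutely irreducible automatically contains its singular plane over $\Fqt$ and hence falls under Proposition~\ref{plane} in the paper's treatment, and (b) the paper compresses your tabulation in case~(iii) into two sub-cases (both tangent versus at least one non-tangent) rather than enumerating all configurations.
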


\begin{proof}
    We distinguish the proof into two cases:
    
        {\bf Case 1:} Suppose $Q$ is reducible. Then $Q$ must be a union of hyperplanes $\Sigma_1$ and $\Sigma_2$ defined over $\Fqt$. From Theorem \ref{hyperplane}, we see that for any hyperplane $\Sigma$ in $\PP^4$ defined over $\Fqt$,
        $$|\Sigma (\Fqt) \cap V_3| 
        = \begin{cases}
            q^5 + q^3 + q^2 + 1 \ \ \text{if} \ \Sigma \ \text{is not a tangent to} \ V_3 \\
            q^5 + q^2 + 1 \ \ \text{otherwise}
        \end{cases}.$$
        If $\Sigma_1, \Sigma_2$ are both tangents to $V_3$, then 
        $$|V_3 \cap Q(\Fqt)| \le |V_3 \cap \Sigma_1| + |V_3 \cap \Sigma_2| \le 2(q^5 + q^2 + 1) < 2(q^5 + q^2) + q^3 + 1.$$
        So we may assume without loss of generality that $\Sigma_1$ is not a tangent to $V_3$. Thus 
        \begin{align*}
        |V_3 \cap Q(\Fqt)| &= |V_3 \cap \Sigma_1| + |V_3 \cap \Sigma_2| - |V_3 \cap \Sigma_1 \cap \Sigma_2| \\
        &\le 2(q^5 + q^3 + q^2 + 1) - (q^3 + 1) \\
        &= 2(q^5 + q^2) + q^3 + 1.
        \end{align*}
        The inequality above is a consequence of Lemma \ref{ed2}. Note that here equality occurs if and only if both $\Sigma_1$ and $\Sigma_2$ are not tangents to $V_3$ and the plane $\Sigma_1 \cap \Sigma_2$ intersects $V_3$ at a nonsingular plane Hermitian curve. 
        
        \textbf{Case 2:} Suppose  $Q$ is irreducible. Thus $Q$ can not contain a hyperplane defined over $\Fqt$. If $Q$ contains a plane defined over $\Fqt$, Proposition \ref{plane} applies and establishes the assertion. We may assume that $Q$ contains no plane defined over $\Fqt$. If $Q$ contains no generators of $V_3$, then the assertion follows from Proposition \ref{nogen}. Suppose $Q$ contains a generator of $V_3$. If every plane in $\PP^4$ defined over $\Fqt$ contains at most one generator of $V_3$, then Proposition \ref{atmost1gen} applies. Finally, if a plane in $\PP^4$ defined over $\Fqt$ that contains two generators of $V_3$ exists, then the assertion inequality is obtained from Proposition \ref{dgen}. We note that all the inequalities in this case are strict.  This completes the proof. 
\end{proof}

We conclude this section by showing that if $\X$ contains a line $\ell$ defined over $\Fqt$ that is tangent to $V_3$, then $|\X (\Fq) \cap V_3|$ satisfies the bound in Edoukou's conjecture. To this end, we have the following Lemma. 

\begin{lemma}\label{lem1}
    Suppose $P \in V_3$ and $\ell_P$ is a tangent line to $V_3$ passing through $P$. Then
    \begin{enumerate}
        \item[(a)] there exist $q^2 + 1$ planes containing $\ell_P$ that are contained in $T_P (V_3)$, and
        \item[(b)] if $\Pi \in \B(\ell_P)$ and $\Pi \not\subset T_P(V_3)$, then $\Pi \cap V_3$ does not contain any generator of $V_3$.  
    \end{enumerate}
\end{lemma}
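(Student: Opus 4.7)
The plan is to first establish the key geometric fact that any tangent line $\ell_P$ through $P$ automatically lies in the tangent hyperplane $T_P(V_3)$. Once that is in hand, part (a) becomes a trivial combinatorial count inside $T_P(V_3)\cong\PP^3$, and part (b) reduces to an elementary incidence argument for two lines in a projective plane.

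To obtain $\ell_P\subset T_P(V_3)$, I would argue as follows. By Theorem \ref{hyperplane}(a), the section $T_P(V_3)\cap V_3$ is a cone with apex $P$ over a nonsingular plane Hermitian curve $V_1$. Consequently, every line through $P$ lying in $T_P(V_3)$ meets $V_3$ either only at $P$ (so is a tangent in the sense of Definition \ref{lines}(a)) or is one of the rulings of the cone, hence a generator of $V_3$. Conversely, a line $\ell$ through $P$ not contained in $T_P(V_3)$ cannot be a generator by Remark \ref{rem0}, and it is a standard fact for Hermitian varieties that such an $\ell$ must be secant: the Hermitian form restricted to the two-dimensional $\Fqt$-subspace defining $\ell$ is nondegenerate precisely because $\ell \not\subset T_P(V_3)$, and a nondegenerate Hermitian form on $\Fqt^2$ has exactly $q+1$ isotropic lines, yielding $|\ell\cap V_3|=q+1$. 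Thus tangent lines at $P$ are exactly the non-generator lines through $P$ inside $T_P(V_3)$; in particular $\ell_P\subset T_P(V_3)$.

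Part (a) is then immediate: inside the hyperplane $T_P(V_3)\cong\PP^3$ defined over $\Fqt$, the planes containing the fixed line $\ell_P$ form a pencil (a $\PP^1$-family), and hence there are exactly $q^2+1$ such planes defined over $\Fqt$.

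For part (b), I would argue by contradiction. Let $\Pi\in\B(\ell_P)$ with $\Pi\not\subset T_P(V_3)$, and suppose $\Pi\cap V_3$ contains a generator $m$ of $V_3$. Since $\ell_P$ is tangent, $\ell_P\not\subset V_3$, so $m\neq\ell_P$; as $m$ and $\ell_P$ are distinct lines in the projective plane $\Pi$, they meet in a unique point $Q$. If $Q=P$, then $m$ is a generator through $P$, so $m\subset T_P(V_3)$ by Remark \ref{rem0}, and together with $\ell_P\subset T_P(V_3)$ from Step 1 we conclude that $\Pi=\langle\ell_P,m\rangle\subset T_P(V_3)$, contradicting our choice of $\Pi$. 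If instead $Q\neq P$, then $Q\in m\subset V_3$ provides a second $\Fqt$-rational point of $\ell_P\cap V_3$, contradicting the tangency of $\ell_P$. The only step with genuine content is the initial containment $\ell_P\subset T_P(V_3)$; the rest is just the observation that a plane containing both a tangent line and a generator of $V_3$ through a common point is forced into $T_P(V_3)$.
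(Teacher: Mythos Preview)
Your proof is correct and follows essentially the same approach as the paper: both reduce (a) to the containment $\ell_P\subset T_P(V_3)$ and handle (b) by arguing that a generator $m\subset\Pi$ and $\ell_P$ span $\Pi$ yet both lie in $T_P(V_3)$. You are simply more explicit than the paper in two places: you spell out the secant-line argument for $\ell_P\subset T_P(V_3)$ via the restricted Hermitian form, and in (b) you make the case split $Q=P$ versus $Q\neq P$ overt, whereas the paper tacitly uses that $m\cap\ell_P\in V_3\cap\ell_P=\{P\}$ to place $m$ through $P$.
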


\begin{proof}
To prove part (a), it is enough to show that $\ell_P \subset T_P (V_3)$. But the later is true since any line passing through $P$ that is not contained in $T_P(V_3)$ intersects $V_3$ at $q+1$ points. 
Suppose $\Pi \in \B(\ell_P)$ and $\Pi$ contains a generator, say $\ell$, of $V_3$. Now $\ell$ and $\ell_P$ are two distinct lines in $\Pi$ and hence $\Pi$ is the unique plane containing $\ell$ and $\ell_P$. On the other hand, both the lines $\ell$ and $\ell_P$ are contained in $T_P(V_3)$. It follows that $\Pi \subset T_P(V_3)$.
This proves part (b). 
\end{proof}

\begin{proposition}\label{tanline}
    Suppose that $\X$ contains a tangent line to $V_3$ and $\X$ does not contain any plane defined over $\Fqt$. Then $$|\X(\Fqt) \cap V_3| \le (d-1)q^5 + (d-1)q^4 + dq^3 + dq^2 + 1 < d(q^5 + q^2) + q^3 + 1.$$ 
\end{proposition}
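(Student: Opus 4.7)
Let $P = \ell_P \cap V_3$, the unique point of $V_3$ on the tangent line. The plan is to use the tangent hyperplane $T_P(V_3)$ to split $\PP^4$: bound $|\X(\Fqt) \cap V_3 \cap T_P(V_3)|$ via Theorem \ref{degen}, and bound the intersection with the affine complement $\PP^4 \setminus T_P(V_3)$ by foliating it with the planes in $\B(\ell_P)$ that are not contained in $T_P(V_3)$.

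Since $\X$ contains no plane, it contains no hyperplane either, so $T_P(V_3) \not\subset \X$ and $\X \cap T_P(V_3)$ is a surface of degree $d$ in $T_P(V_3) \cong \PP^3$. Because $V_3 \cap T_P(V_3)$ is a rank-$3$ Hermitian cone (Remark \ref{rem0}), Theorem \ref{degen} immediately gives $|\X(\Fqt) \cap V_3 \cap T_P(V_3)| \le d(q+1)q^2 + 1 = dq^3 + dq^2 + 1$. On the other hand, by Lemma \ref{lem1}(a) exactly $q^2 + 1$ of the $|\B(\ell_P)| = q^4 + q^2 + 1$ planes in $\B(\ell_P)$ lie in $T_P(V_3)$, so the remaining $q^4$ planes $\Pi' \in \B(\ell_P)$ with $\Pi' \not\subset T_P(V_3)$ each satisfy $\Pi' \cap T_P(V_3) = \ell_P$ for dimension reasons, which shows that the pieces $\Pi' \setminus \ell_P$ partition $\PP^4 \setminus T_P(V_3)$ disjointly.

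For each such $\Pi'$, Lemma \ref{lem1}(b) rules out the presence of a generator in $\Pi' \cap V_3$; combined with the trichotomy of plane sections of $V_3$ recalled in Section \ref{sec:prel}, this forces $\Pi' \cap V_3$ to be an absolutely irreducible Hermitian plane curve of degree $q+1$. Since $\X$ contains no plane we have $\Pi' \not\subset \X$, and the hypothesis $\ell_P \subset \X$ factors $\X \cap \Pi' = \ell_P \cup C$ with $C \subset \Pi'$ a plane curve of degree $d-1$; as $\Pi' \cap V_3$ is absolutely irreducible of degree $q+1 > d-1$, B\'{e}zout gives $|\X(\Fqt) \cap V_3 \cap (\Pi' \setminus \ell_P)| \le (d-1)(q+1)$. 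Summing over the $q^4$ affine slices and adding the tangent-hyperplane contribution yields
\[
|\X(\Fqt) \cap V_3| \le dq^3 + dq^2 + 1 + q^4(d-1)(q+1) = (d-1)q^5 + (d-1)q^4 + dq^3 + dq^2 + 1,
\]
and the terminal strict inequality reduces to $d(q^5+q^2)+q^3+1 - [(d-1)q^5 + (d-1)q^4 + dq^3 + dq^2 + 1] = q^3[q^2 - (d-1)(q+1)] > 0$, valid whenever $d \le q$. The argument is quite direct; the main point to verify carefully is that Lemma \ref{lem1}(b) upgrades ``$\Pi' \cap V_3$ contains no generator'' to ``$\Pi' \cap V_3$ is an absolutely irreducible Hermitian curve'', which is exactly what allows the sharp B\'{e}zout count $(d-1)(q+1)$ to apply uniformly over every one of the $q^4$ affine slices.
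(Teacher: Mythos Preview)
Your proof is correct and follows essentially the same approach as the paper: both split the count using the book $\B(\ell_P)$, bound the contribution inside $T_P(V_3)$ by Theorem \ref{degen}, and bound each of the remaining $q^4$ planar slices by $(d-1)(q+1)$ via Lemma \ref{lem1}(b) and B\'{e}zout. Your explicit observation that the pieces $\Pi' \setminus \ell_P$ partition $\PP^4 \setminus T_P(V_3)$ is a clean way to phrase what the paper leaves implicit in its summation over $\B(\ell_P)$.
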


\begin{proof}
    Suppose $\X$ contains a line $\ell_P$ that is tangent to $V_3$ at the point $P \in V_3$. Write
    $$\B (\ell_P) =\{\Pi_0, \dots, \Pi_{q^2}, \Pi_{q^2+1}, \dots, \Pi_{q^4 + q^2}\},$$
    where $\Pi_0, \dots, \Pi_{q^2} \subset T_P (V_3)$. Note that
    \begin{equation}\label{one}
        |\ell_P \cap \X(\Fqt) \cap V_3| + \sum_{i=0}^{q^2} 
|\X(\Fqt) \cap V_3 \cap (\Pi \setminus \ell_P)| = |\X(\Fqt) \cap V_3 \cap T_P (V_3)| \le d (q+1)q^2 + 1.
    \end{equation}
    Note that the last inequality follows from Theorem \ref{degen}. Using Lemma \ref{lem1}, we conclude that, for $j = q^2 + 1, \dots, q^4 + q^2$, the plane $\Pi_j$ intersects $V_3$ at a plane non-degenerate Hermitian curve. Also the curve $\X \cap (\Pi_j \setminus \ell_P)$ is a plane curve of degree at most $d-1$. 
    From B\'{e}zout's theorem, 
    \begin{equation}\label{two}
    |\X(\Fqt) \cap V_3 \cap (\Pi \setminus \ell_P)| \le (d-1)(q+1).
    \end{equation}
    Finally, using equations \eqref{one} and \eqref{two}, we see that,
    \begin{align*}
        |\X(\Fqt) \cap V_3| &= |\ell_P \cap \X (\Fqt) \cap V_3| + \sum_{j=0}^{q^4 + q^2} |\X(\Fqt) \cap V_3 \cap (\Pi \setminus \ell_P)| \\
        &=|\X(\Fqt) \cap V_3 \cap T_P (V_3)|+ \sum_{j=q^2 + 1}^{q^4 + q^2} |\X(\Fqt) \cap V_3 \cap (\Pi \setminus \ell_P)| \\
        &\le d(q+1)q^2 + 1 + q^4 (d-1)(q+1) \\
        &= (d-1)q^5 + (d-1)q^4 + dq^3 + dq^2 + 1 < d(q^5 + q^2) + q^3 + 1. 
    \end{align*}
    This completes the proof.
\end{proof}

\section{Intersection of cubic threefolds with Hermitian threefold}\label{sec:cubic}

The main goal of this section is to determine the maximum number of $\Fqt$-rational points on the intersection of a cubic threefold and Hermitian threefolds in $\PP^4$. To this end, we will denote by $\C$ a cubic threefold in $\PP^4$ defined over $\Fqt$ while we keep using $V_3$ for denoting the non-degenerate Hermitian threefold in $\PP^4$. Note that, we have dealt with threefolds that  
\begin{enumerate}
    \item[(a)] contains no generators of $V_3$ and
    \item[(b)] contains a plane defined over $\Fqt$, but no hyperplanes in $\PP^4$ defined over $\Fqt$
\end{enumerate}
in the previous section. Thus, we will need to study the cases 
when 
\begin{enumerate}
    \item[(i)] $\C$ contains generators of $V_3$ but no planes defined over $\Fqt$ in $\PP^4$ and 
    \item[(ii)] $\C$ contains a hyperplane of $\PP^4$ defined over $\Fqt$
\end{enumerate}
in this section. For ease of reading, we will divide the section into two subsections. 
\subsection{Cubic threefolds containing a hyperplane}
In this subsection, we will make use of Edoukou's result (Proposition \ref{ed1}) to show that if $\C$ contains a hyperplane in $\PP^4$ defined over $\Fqt$, then the bound in Edoukou's conjecture holds in the affirmative.  
\begin{proposition}\label{reducible}
    Let $\mathcal{C}$ be a cubic threefold in $\PP^4$ defined over $\Fqt$ that contains a hyperplane $\Sigma$ defined over $\Fqt$. Then
    $$|\mathcal{C}(\Fqt) \cap V_3| \le 3(q^5 + q^2) + q^3 + 1.$$
\end{proposition}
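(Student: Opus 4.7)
Since the hyperplane $\Sigma = V(L)$ is contained in $\C = V(F)$, the linear form $L$ divides the cubic $F$, so $F = L G$ for some homogeneous quadratic form $G$ over $\Fqt$. Thus $\C = \Sigma \cup Q$ where $Q = V(G)$ is a quadric threefold defined over $\Fqt$. The strategy is to bound $|\C(\Fqt) \cap V_3|$ via the inclusion-exclusion identity
\[
|\C(\Fqt) \cap V_3| = |\Sigma \cap V_3| + |Q(\Fqt) \cap V_3| - |\Sigma \cap Q(\Fqt) \cap V_3|,
\]
combined with Theorem~\ref{hyperplane}, Proposition~\ref{ed1}, and Lemma~\ref{ed2}, after a case split on the type of $Q$ with respect to $V_3$.

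In the first (easy) case, when $Q$ is not of Type I, II, or III, Proposition~\ref{ed1} gives $|Q(\Fqt) \cap V_3| \le 2q^5 + q^2 + 1$, and combined with $|\Sigma \cap V_3| \le q^5 + q^3 + q^2 + 1$ from Theorem~\ref{hyperplane}, the trivial bound $|\C \cap V_3| \le 3q^5 + q^3 + 2q^2 + 2$ already lies within $3(q^5 + q^2) + q^3 + 1$ for every $q \ge 1$.

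Suppose now that $Q = \Sigma_1 \cup \Sigma_2$ is of Type I, II, or III, and set $\Pi := \Sigma_1 \cap \Sigma_2$. If $\Sigma \in \{\Sigma_1, \Sigma_2\}$, then set-theoretically $\C$ is a union of at most two hyperplanes, and the bound follows at once from Theorem~\ref{hyperplane} and Lemma~\ref{ed2}. Otherwise $\Sigma, \Sigma_1, \Sigma_2$ are three distinct hyperplanes, and I apply three-fold inclusion-exclusion to $\C = \Sigma \cup \Sigma_1 \cup \Sigma_2$. The decisive dichotomy is whether $\Pi \subseteq \Sigma$. When $\Pi \subseteq \Sigma$, all three pairwise hyperplane intersections and the triple intersection are equal to $\Pi$, and in the extremal subcase (Type~I with $\Sigma$ non-tangent) one gets
\[
|\C \cap V_3| \le 3(q^5 + q^3 + q^2 + 1) - 2(q^3 + 1) = 3(q^5 + q^2) + q^3 + 1,
\]
matching the claim with equality. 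When $\Pi \not\subseteq \Sigma$, each $\Sigma \cap \Sigma_i$ is a plane distinct from $\Pi$ and $\Sigma \cap \Sigma_1 \cap \Sigma_2 = \Sigma \cap \Pi$ is a line; Lemma~\ref{ed2}, applied with the non-tangent member of each pair, yields $|\Sigma \cap \Sigma_i \cap V_3| \ge q^3 + 1$, whereas the line contributes at most $q^2 + 1$ points, giving $|\C \cap V_3| \le 3q^5 + 4q^2 + 1$, strictly below the target.

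The one genuinely delicate sub-case is Type~III with $\Sigma$ itself tangent and $\Pi \not\subseteq \Sigma$, because Lemma~\ref{ed2} then fails for the pair $(\Sigma, \Sigma_1)$ in which both hyperplanes are tangent. Here I would fall back on the trivial lower bound $|\Sigma \cap \Sigma_1 \cap V_3| \ge q^2 + 1$ (the worst planar section of $V_3$) together with the sharper estimate $|\Sigma \cap V_3| \le q^5 + q^2 + 1$ from Theorem~\ref{hyperplane}(a); the loss from one and the gain from the other cancel, and the resulting bound remains strictly below $3(q^5 + q^2) + q^3 + 1$. The argument is thus a disciplined inclusion-exclusion driven by Edoukou's classification of quadric-Hermitian intersections.
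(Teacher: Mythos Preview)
Your argument is correct and follows the same overall strategy as the paper: write $\C = \Sigma \cup Q$, invoke Proposition~\ref{ed1} to split into the four cases, and in Types I--III exploit that at least one of $\Sigma_1,\Sigma_2$ is non-tangent so that Lemma~\ref{ed2} applies. The difference is purely in bookkeeping. The paper never unfolds $Q$ into a three-fold inclusion--exclusion; instead it uses the two-set identity
\[
|\C(\Fqt)\cap V_3| = |\Sigma\cap V_3| + |Q(\Fqt)\cap V_3| - |(\Sigma_1\cup\Sigma_2)\cap\Sigma\cap V_3|
\]
and lower-bounds the subtracted term by the single planar section $|\Sigma_i\cap\Sigma\cap V_3|\ge q^3+1$, where $\Sigma_i$ is the guaranteed non-tangent member of $\{\Sigma_1,\Sigma_2\}$. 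This makes your dichotomy on whether $\Pi\subseteq\Sigma$, and the delicate Type~III sub-case with both $\Sigma$ and $\Sigma_1$ tangent, entirely unnecessary: the non-tangent hyperplane is always available for the single application of Lemma~\ref{ed2}. Your route yields slightly sharper numerical bounds in the non-extremal sub-cases (e.g.\ $3q^5+4q^2+1$ when $\Pi\not\subseteq\Sigma$), at the cost of several extra case distinctions; the paper's route reaches the required bound with less work but less resolution.
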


\begin{proof}
  Suppose that $\mathcal{C}(\Fqt)$ contains a hyperplane $\Sigma$ defined over $\Fqt$. It follows immediately that $\mathcal{C} = \Sigma \cup Q$, where $Q$ is a quadric threefold defined over $\Fqt$. Now, 
  $|\Sigma \cap V_3| \le q^5 + q^3 + q^2 + 1$. We distinguish four cases.

\textbf{Case 1: $Q$ is of Type I.} Then $Q = \Sigma_1 \cup \Sigma_2$, where $\Sigma_1$ and $\Sigma_2$ are hyperplanes in $\PP^4$ defined over $\Fqt$, neither of them tangent to $V_3$. Using Proposition \ref{ed1} we derive that $|Q \cap V_3| = 2(q^5 + q^2) + q^3 + 1$. We have, 
\begin{align*}
    |\mathcal{C} \cap V_3| &= |\Sigma \cap V_3| + |Q(\Fqt) \cap V_3| - |(\Sigma_1 \cup \Sigma_2) \cap \Sigma \cap V_3| \\ 
    & \le |\Sigma \cap V_3| + |Q(\Fqt) \cap V_3| - |(\Sigma_1  \cap \Sigma) \cap V_3| \\ 
    & \le (q^5 + q^3 + q^2 + 1) + 2(q^5 + q^2) + q^3 + 1 - (q^3 + 1) \\
    &= 3(q^5 + q^2) + q^3 + 1. 
\end{align*}
The last inequality above follows from Lemma \ref{ed2}.

\textbf{Case 2: $Q$ is of Type II.} We have $Q = \Sigma_1 \cup \Sigma_2$, where $\Sigma_1$ and $\Sigma_2$ are hyperplanes in $\PP^4$ defined over $\Fqt$, neither of them tangent to $V_3$. Using Proposition \ref{ed1} we derive that $|Q(\Fqt) \cap V_3| = 2q^5 + q^2 + q^3 + 1$. We have, 
\begin{align*}
    |\mathcal{C}(\Fqt) \cap V_3| &= |\Sigma \cap V_3| + |Q(\Fqt) \cap V_3| - |(\Sigma_1 \cup \Sigma_2) \cap \Sigma \cap V_3| \\ 
    & \le |\Sigma \cap V_3| + |Q(\Fqt) \cap V_3| - |(\Sigma_1  \cap \Sigma) \cap V_3| \\ 
    & \le (q^5 + q^3 + q^2 + 1) + 2q^5 + q^2 + q^3 + 1 - (q^3 + 1) \\
    &= 3q^5 + q^3 + 2q^2 + 1 < 3(q^5 + q^2) + q^3 + 1. 
\end{align*}
The last inequality above follows from Lemma \ref{ed2}.

\textbf{Case 3: $Q$ is of Type III.} Here $Q = \Sigma_1 \cup \Sigma_2$, where $\Sigma_1$ and $\Sigma_2$ are hyperplanes in $\PP^4$ defined over $\Fqt$. Furthermore, the hyperplane $\Sigma_2$ is not a tangent to $V_3$. Using Proposition \ref{ed1} we derive that $|Q(\Fqt) \cap V_3| = 2q^5 + 2q^2 + q^3 + 1$. We have, 
\begin{align*}
    |\mathcal{C}(\Fqt) \cap V_3| &= |\Sigma \cap V_3| + |Q(\Fqt) \cap V_3| - |(\Sigma_1 \cup \Sigma_2) \cap \Sigma \cap V_3| \\ 
    & \le |\Sigma \cap V_3| + |Q(\Fqt) \cap V_3| - |(\Sigma_2  \cap \Sigma) \cap V_3| \\ 
    & \le (q^5 + q^3 + q^2 + 1) + 2q^5 + 2q^2 + 1 - (q^3 + 1) \\
    &= 3q^5 + 3q^2 + 1 < 3(q^5 + q^2) + q^3+ 1. 
\end{align*}
\textbf{Case 4: $Q$ is not of Type I, Type II or Type III.} In this case, from Proposition \ref{ed1}, we obtain that $|Q \cap V_3| \le 2q^5 + q^2 + 1$. Thus 
\begin{align*}
    |\mathcal{C} \cap V_3| & \le |\Sigma \cap V_3| + |Q \cap V_3| \\
    &\le q^5 + q^3 + q^2 + 1 + 2q^5 + q^2 + 1 \\
    &= 3q^5 + 2q^2 + q^3 + 2 < 3(q^5 + q^2) + q^3 + 1.
\end{align*}
This completes the proof. 
\end{proof}

\begin{remark} \label{attained}\normalfont
For the sake of proving the inequality in the above Proposition, it is not really necessary to distinguish Case 1, Case 2, and Case 3. In each of these three cases, the presence of one nontangent hyperplane in the conic plays a pivotal role  in establishing the required upper bound. However, the above distinction shows us that the upper bound could be attained by a cubic threefold $\C$ in $\PP^4$ defined over $\Fqt$ and containing a hyperplane in $\PP^4$ defined over $\Fqt$ will attain the upper bound if and only if the following conditions are satisfied:
\begin{enumerate}
\item[(a)] $\Sigma_1, \Sigma_2, \Sigma_3$ are distinct hyperplanes in $\PP^4$ defined over $\Fqt$.
\item[(b)] $\Sigma_1, \Sigma_2, \Sigma_3$ intersect at a common plane $\Pi$ in $\PP^4$ such that $\Pi \cap V_3$ is a plane nonsingular Hermitian curve. 
\end{enumerate}
In fact, we will establish at the end of this article that if $q \ge 7$, then a cubic threefold $\C$ defined over $\Fqt$ in $\PP^4$ attains the upper bound of Edoukou's conjecture if and only if $\C$ satisfies the conditions mentioned above. 
\end{remark}

\subsection{Cubic threefolds containing no hyperplanes}

\begin{lemma}\label{ALR}
    Let $\C'$ be an irreducible plane cubic curve defined over $\Fqt$. Then
    \begin{enumerate}
        \item[(a)] If $\C'$ is absolutely irreducible, then 
        $$||\C'(\Fqt)| - (q^2 + 1)| \le 2q.$$
        \item[(b)] If $\C'$ is not absolutely irreducible, then $|\C' (\Fqt)| \le 1$. 
    \end{enumerate}
\end{lemma}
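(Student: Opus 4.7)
The plan is to handle the two parts separately, each by invoking a result already stated in Section~\ref{sec:prel}.

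For part~(a), I would simply apply the Aubry--Perret bound (Theorem~\ref{ap}) to $\C'$ viewed as an absolutely irreducible plane curve of degree $3$ defined over the field $\Fqt = \mathbb{F}_{q^2}$. With $d = 3$ and the field $\mathbb{F}_{q^2}$ in place of the $\Fq$ of that theorem, it immediately yields
$$\bigl||\C'(\Fqt)| - (q^2+1)\bigr| \le (3-1)(3-2)\sqrt{q^2} = 2q,$$
which is exactly the stated inequality. So part~(a) is essentially one line once the correct result is recognised.

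For part~(b), my plan is to first apply Proposition~\ref{lac1}: since $\C'$ is irreducible over $\Fqt$ but not absolutely irreducible, every $\Fqt$-rational point of $\C'$ is a singular point of $\C'$, so it suffices to bound $|\Sing(\C')(\Fqt)|$. I would then analyse the factorisation of the defining cubic $f \in \Fqt[x,y,z]$ over $\overline{\FF}_q$: its absolutely irreducible factors form a single Galois orbit of size greater than $1$, and all factors in that orbit share a common degree whose product with the orbit size equals $3$. The only possibility is an orbit of three Galois-conjugate lines $L_1, L_2, L_3$; a factorisation as a linear factor times an absolutely irreducible conic is ruled out because Galois preserves the degrees of irreducible factors, which would force both pieces to be individually defined over $\Fqt$ and contradict the irreducibility of $f$ over $\Fqt$.

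Finally, the singular locus of the reduced curve $\C' = L_1 \cup L_2 \cup L_3$ consists of the three pairwise intersection points. Since Galois permutes $L_1, L_2, L_3$ cyclically, it permutes these three intersection points by the same cycle. Hence either the three points are pairwise distinct, in which case none of them is $\Fqt$-rational, or they coincide at a single Galois-fixed point (that is, the three lines are concurrent), which is then the unique $\Fqt$-rational singular point of $\C'$. Either way, $|\C'(\Fqt)| = |\Sing(\C')(\Fqt)| \le 1$. I do not foresee any real obstacle; the only point needing a word of care is the ruling out of a $(1+2)$ factorisation in~(b), which follows at once from the fact that the Galois action preserves the degree of each absolutely irreducible factor.
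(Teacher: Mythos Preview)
Your proof is correct and follows the same approach as the paper: part~(a) is Theorem~\ref{ap} with $d=3$ over $\Fqt$, and part~(b) combines Proposition~\ref{lac1} with the factorisation of the cubic into three Galois-conjugate lines. If anything your argument is more complete, since you explicitly treat the case where the three lines form a triangle (forcing $\Sing(\C')(\Fqt)=\emptyset$), whereas the paper, after citing \cite[Theorem~3.2]{LR} for the degree divisibility, passes directly to the concurrent configuration.
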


\begin{proof}
\
    \begin{enumerate}
        \item[(a)] Follows from Theorem \ref{ap}.
        \item[(b)] Suppose that $\C'$ is a plane cubic curve defined over $\Fqt$ that is irreducible over $\Fqt$ but reducible in $\overline{\FF}_q$, the algebraic closure of $\Fq$. From \cite[Theorem 3.2]{LR}, we see that if $Z$ is an absolutely irreducible component of $\C'$, then $\deg (Z) \mid \deg \C'$. Since $\deg \C' = 3$, it follows that $\C'$ is a union of three lines passing through a common point, say $P$. Thus Proposition \ref{lac1} implies that $\C' (\Fqt) \subseteq \{P\}$ and hence the assertion is established. 
        \end{enumerate}
    This completes the proof.
\end{proof}

\begin{lemma}\label{absirr}
Let $q \ge 7$ and $\C$ be a cubic threefold defined over $\Fqt$ and $P \in \C (\Fqt) \cap V_3$. If $\C \cap T_P(V_3)$ is a cone over a plane cubic absolutely irreducible curve $\C^*$ with center at $P$, then $\C$ contains a line that is tangent to $V_3$ at $P$. 
\end{lemma}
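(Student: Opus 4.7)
The plan is to exhibit a line through $P$ that is contained in $\C$, lies in $T_P(V_3)$, but is not contained in $V_3$; any such line is automatically tangent to $V_3$ at $P$. To set this up, I would project from $P$ inside $T_P(V_3) \cong \PP^3$ onto a fixed plane $H \subset T_P(V_3)$ not passing through $P$, so that lines through $P$ in $T_P(V_3)$ correspond bijectively to points of $H \cong \PP^2$. By Remark \ref{rem0}(b), the cone $V_3 \cap T_P(V_3)$ has apex $P$ and cuts $H$ in a nonsingular plane Hermitian curve $V_1'$ of degree $q+1$; by hypothesis, the cone $\C \cap T_P(V_3)$ also has apex $P$ and cuts $H$ in the absolutely irreducible plane cubic $\C^*$. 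A line through $P$ is a generator of $V_3$ if and only if its trace on $H$ lies in $V_1'$, and it is contained in $\C \cap T_P(V_3)$ if and only if its trace on $H$ lies in $\C^*$. Hence the task reduces to producing a point in $\C^*(\Fqt) \setminus V_1'$.

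Since $\C^*$ is absolutely irreducible of degree $3$ and $V_1'$ is absolutely irreducible of degree $q+1 \ge 8$, they share no common component, so $\C^* \not\subseteq V_1'$ and B\'ezout's theorem yields $|(\C^* \cap V_1')(\overline{\FF}_q)| \le 3(q+1)$. The Aubry--Perret bound (Theorem \ref{ap}) applied to $\C^*$ over $\Fqt$ gives
$$|\C^*(\Fqt)| \ge q^2 + 1 - 2q = (q-1)^2.$$
A direct comparison shows that $(q-1)^2 > 3(q+1)$ is equivalent to $q^2 - 5q - 2 > 0$, which holds for every $q \ge 7$. Pigeonhole then produces a point $Q \in \C^*(\Fqt) \setminus V_1'$.

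The $\Fqt$-line $\overline{PQ}$ therefore lies in $\C \cap T_P(V_3) \subset \C$, is contained in $T_P(V_3)$, but is not contained in $V_3$ (since $Q \notin V_3 \cap H = V_1'$). Because $\overline{PQ} \subset T_P(V_3)$, its scheme-theoretic intersection with $V_3$ has multiplicity at least $2$ at $P$, so $\overline{PQ} \cap V_3$ cannot consist of $q+1$ distinct $\Fqt$-rational points; by Lemma \ref{line}, the line $\overline{PQ}$ therefore meets $V_3$ in exactly one $\Fqt$-rational point, namely $P$, and is thus tangent to $V_3$ at $P$. The main obstacle will be the plane-curve comparison in $H$: one must argue cleanly that $\C^* \not\subseteq V_1'$ and then force an $\Fqt$-rational escape point using Aubry--Perret, which is precisely where the numerical hypothesis $q \ge 7$ enters through the inequality $(q-1)^2 > 3(q+1)$.
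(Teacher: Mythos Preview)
Your proof is correct and follows essentially the same route as the paper's: project onto a plane $H\subset T_P(V_3)$ avoiding $P$, bound $|\C^*(\Fqt)|$ below by Aubry--Perret, bound $|\C^*\cap V_1'|$ above by B\'ezout, and use $q\ge 7$ to force an $\Fqt$-rational point $Q\in\C^*\setminus V_1'$ so that $\overline{PQ}$ is the desired tangent line. Your extra justification that $\overline{PQ}$ cannot be secant (via intersection multiplicity at $P$) makes explicit a step the paper leaves implicit.
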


\begin{proof}
    Since $\C^*$ is a plane absolutely irreducible curve defined over $\Fqt$, it follows from Lemma \ref{ALR} that $|\C^* (\Fqt)| \ge q^2 + 1 - 2q$. Suppose that $\C^* \subset \Pi$, where $\Pi$ is a plane, not passing through $P$, defined over $\Fqt$ which is contained in $T_P(V_3)$. Now $V_3 \cap \Pi$ is a plane non-degenerate Hermitian curve and hence B\'{e}zout's theorem implies 
    $|(V_3 \cap \Pi) \cap C^*| \le 3(q+1)$. Since $q \ge 7$, it follows that $|\C^* (\Fqt)| > |(V_3 \cap \Pi) \cap C^*|$ and hence there is a point $P' \in \C^*$ that is not contained in $V_3$. Since $\C$ is a cone over $C^*$ with center at $P$, the line $\ell$ joining $P$ and $P'$ is contained in $\C$. Since $\ell$ is a line defined over $\Fqt$ passing through $P$, the line $\ell$ must be either a tangent line or a generator. Since $P' \not\in V_3$, it follows that $\ell$ can not be a generator and the assertion follows. 
\end{proof}

\begin{lemma}\label{lem:5.6}
Let $P \in V_3$ and $\ell$ be a generator of $V_3$ containing $P$. Let $\C$ be a cubic threefold defined over $\Fqt$ such that for any plane $\Pi$ defined over $\Fqt$, the threefold $\C$ contains at most two generators of $V_3$. If $\ell$ is contained in $\C$, then one of the following holds:
\begin{enumerate}
\item[(a)] $T_P(V_3) \cap \C$ contains a cone over a plane curve with center at $P$,
\item[(b)] $|T_P(V_3) \cap \C \cap V_3| \le 2q^3 + 6q^2 - 3q -4$.
\end{enumerate}
\end{lemma}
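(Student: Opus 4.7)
The plan is to apply Theorem \ref{D} to the cubic surface $Y := T_P(V_3) \cap \C$ inside the hyperplane $T_P(V_3) \cong \PP^3$, at the point $P \in Y$. If Theorem \ref{D}(b) applies, or if Theorem \ref{D}(a) holds with a plane through $P$ (any such plane being a cone with center $P$ over one of its lines), we immediately get case (a) of the lemma. The remaining subcase of Theorem \ref{D}(a), namely that $Y$ contains a plane $\Pi$ not through $P$, is handled by writing $Y = \Pi \cup Q$ for a quadric $Q$ in $T_P(V_3)$ containing $P$ and analysing the rank of $Q$: one checks that either $Q$ is a cone with vertex $P$ (yielding case (a) of the lemma), or $Q$ contributes at most two lines through $P$ to $Y$. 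Hence, whenever case (a) of the lemma fails, $Y$ contains at most $d(d-1) = 6$ lines through $P$.

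From now on assume case (a) of the lemma fails. The geometric observation I rely on is that $T_P(V_3) \cap V_3$ is a cone with vertex $P$ over a nonsingular Hermitian plane curve $V_1$, and because $V_1$ contains no line, every line inside this cone passes through $P$. In particular every generator of $V_3$ contained in $Y$ passes through $P$, and there are at most six of them.

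I would then estimate $|Y \cap V_3|$ by sweeping through the book $\B_{T_P(V_3)}(\ell)$ of the $q^2 + 1$ planes of $T_P(V_3)$ containing $\ell$. For each $\Pi_i$ in this book, the intersection $\Pi_i \cap V_3$ is either $\ell$ alone (Case A) or a union of $q+1$ generators $\ell = m_0^{(i)}, m_1^{(i)}, \dots, m_q^{(i)}$ through a common point $P_i \in \ell$ (Case B). Since $\ell \subset Y$ and $\deg Y = 3$, we have $Y \cap \Pi_i = \ell \cup C_i$ for some conic $C_i \subset \Pi_i$, and the hypothesis on $\C$ forces $C_i$ to contain at most one of $m_1^{(i)}, \dots, m_q^{(i)}$. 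A direct B\'ezout computation in $\Pi_i$, with careful removal of the points of $\ell$, shows that the excess $|Y \cap V_3 \cap \Pi_i \setminus \ell|$ is $0$ in Case A, at most $2q$ in the sub-case (B1) where $C_i$ contains no $m_j^{(i)}$, and at most $q^2 + q - 1$ in the sub-case (B2) where $C_i = m_1^{(i)} \cup m''$ for a line $m''$ (the subtracted $-1$ coming from the forced intersection point $m_1^{(i)} \cap m''$ already lying on $m_1^{(i)}$).

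Since every (B2)-plane is determined by a distinct additional generator of $V_3$ through $P$ contained in $Y$ (two such generators cannot share a plane by the planarity hypothesis), the number $b_1$ of (B2)-planes is at most $5$. Summing with the worst case $b_1 = 5$ and $b_0 = q^2 - 4$ planes of type (B1):
\begin{align*}
|Y \cap V_3| &\le (q^2 + 1) + (q^2 - 4)\cdot 2q + 5(q^2 + q - 1) \\
&= 2q^3 + 6q^2 - 3q - 4,
\end{align*}
establishing case (b) of the lemma. The main technical obstacle is the tight bookkeeping in sub-case (B2): only the precise bound $q^2 + q - 1$ (rather than the naive $q^2 + q$) produces an inequality matching the claimed one.
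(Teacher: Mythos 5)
Your proposal is correct and follows essentially the same route as the paper's proof: invoke Theorem \ref{D} to conclude that, when case (a) fails, $T_P(V_3)\cap\C$ contains at most six lines through $P$ (hence at most five generators besides $\ell$, since every generator inside the cone $T_P(V_3)\cap V_3$ passes through $P$), then sweep the book $\B_{T_P(V_3)}(\ell)$, bounding each plane's contribution off $\ell$ by $2q$ via B\'ezout or by $q^2+q-1$ when the plane carries a second generator, with at most five planes of the latter kind. Your extra treatment of the subcase where $Y$ contains a plane not through $P$, and the finer split into cases A/B1/B2, are small refinements of the same counting argument and lead to the identical total $2q^3+6q^2-3q-4$.
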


\begin{proof}
    Suppose (a) does not hold. Then, in view of Theorem \ref{D}, we see that $T_P (V_3) \cap \C$ contains at most six lines passing though $P$. Let
    $$\mathcal{S}:=  \{\Pi \in \B_{T_P(V_3)} (\ell)  : \C \cap \Pi \ \text{contains  two generators of} \ V_3\} \ \ \text{and} \ \ s = |\mathcal{S}|.$$
    It follows that $s \le 5$ and for any $\Pi \in \mathcal{S}$, we have $|\Pi \cap V_3 \cap (\C \setminus \ell)| \le q^2 + q - 1$. 
    On the other hand, for any $\Pi \in \B_{T_P(V_3)} (\ell) \setminus \mathcal{S}$, using B\'{e}zout's Theorem, we see that $|\Pi \cap V_3 \cap (\C \setminus \ell)| \le 2q$. Thus
    \begin{align*}
        |T_P (V_3) \cap C \cap V_3| &\le |\ell| + \sum_{\Pi \in \mathcal{S}} |\Pi \cap V_3 \cap (\C \setminus \ell)| + \sum_{\Pi \in \B_{T_P(V_3)} (\ell) \setminus \mathcal{S}} |\Pi \cap V_3 \cap (\C \setminus \ell)| \\
        & \le (q^2 + 1) + s (q^2 + q - 1) + (q^2 + 1 - s) (2q) \\
        & = 2q^3 + q^2 + 2q + 1 + s (q^2 - q - 1) \\
        & \le 2q^3 + q^2 + 2q + 1 + 5 (q^2 - q - 1)  \ \ (\text{since} \ q^2 - q - 1 > 0) \\
        & = 2q^3 + 6q^2 - 3q - 4. 
    \end{align*}
    This completes the proof. 
\end{proof}
\begin{proposition}\label{casetwo}
Let $q \ge 7$. Let $\C$ be a cubic threefold defined over $\Fqt$ that contains a generator of $V_3$ but contains no plane defined over $\Fqt$. Moreover, assume that for any plane $\Pi$ defined over $\Fqt$, the intersection $\C \cap \Pi$ contains at most two generators of $V_3$. Then $$|\C(\Fqt) \cap V_3| < 3(q^5 + q^2) + q^3 + 1.$$
\end{proposition}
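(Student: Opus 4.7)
The plan is to partition $V_3\setminus\ell$ by the tangent hyperplanes of $V_3$ at the $\Fqt$-points of a generator $\ell\subset\C$, and to combine this partition with a uniform pointwise bound derived from Lemma~\ref{lem:5.6}. First I would reduce: if $\C$ contains a tangent line to $V_3$, Proposition~\ref{tanline} already gives the desired strict inequality, so I may assume $\C$ contains no tangent line to $V_3$ and fix any generator $\ell\subset\C$.

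The first substantive step is to prove the pointwise bound
\[|T_P(V_3)\cap\C\cap V_3|\le 2q^3+6q^2-3q-4\]
for every $P\in\ell(\Fqt)$. I would apply Lemma~\ref{lem:5.6} at $P$: either the bound holds outright (case (b)) or $T_P(V_3)\cap\C$ contains a cone with vertex $P$ over some plane curve $\C^*$ (case (a)). In case (a), since $T_P(V_3)\cap\C$ is a cubic surface, $\deg\C^*\in\{1,2,3\}$. For $\deg\C^*\le 2$, either the cone itself (if $\deg\C^*=1$) or the residual divisor in $T_P(V_3)\cap\C$ (if $\deg\C^*=2$) is a plane defined over $\Fqt$ contained in $\C$, contradicting the no-plane hypothesis. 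For $\deg\C^*=3$ the cone coincides with $T_P(V_3)\cap\C$; any reducibility of $\C^*$ over $\Fqt$ again produces a plane over $\Fqt$ in $\C$, and an absolutely irreducible $\C^*$ triggers Lemma~\ref{absirr} (whose use of $q\ge 7$ is the only reason we need this hypothesis) forcing a tangent line in $\C$ and contradicting the reduction. The only surviving case has $\C^*$ irreducible over $\Fqt$ but not absolutely irreducible; Lemma~\ref{ALR}(b) then gives $|\C^*(\Fqt)|\le 1$, so $|(T_P(V_3)\cap\C)(\Fqt)|\le 1+q^2$, well within the claimed bound.

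Next I would execute the partition step using Hermitian polarity. For $Q\in V_3$ one has $Q\in T_P(V_3)$ if and only if $P\in T_Q(V_3)$. For $Q\in V_3\setminus\ell$, the alternative $\ell\subset T_Q(V_3)$ would force $Q$ into $\bigcap_{P\in\ell}T_P(V_3)$, which (by the paragraph just before Theorem~\ref{EdTQ}) meets $V_3$ precisely in $\ell$. Hence $\ell\cap T_Q(V_3)$ is a single $\Fqt$-point, yielding
\[V_3\setminus\ell=\bigsqcup_{P\in\ell(\Fqt)}\bigl(V_3\cap T_P(V_3)\setminus\ell\bigr).\]
Intersecting with $\C$ and using $\ell\subset\C\cap V_3\cap T_P(V_3)$ for every $P\in\ell(\Fqt)$ telescopes to
\[|\C(\Fqt)\cap V_3|=(q^2+1)+\sum_{P\in\ell(\Fqt)}\bigl(|T_P(V_3)\cap\C\cap V_3|-(q^2+1)\bigr)\le (q^2+1)(2q^3+5q^2-3q-4),\]
which expands to $2q^5+5q^4-q^3+q^2-3q-4$. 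Subtracting from $3(q^5+q^2)+q^3+1$ leaves $q^5-5q^4+2q^3+2q^2+3q+5$, positive for every $q\ge 5$ and so in particular for $q\ge 7$.

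The main obstacle will be the cone classification underlying the pointwise bound: every lower-degree or reducible pattern of $\C^*$ must be ruled out via the no-plane hypothesis in tandem with Lemmas~\ref{absirr} and~\ref{ALR}, so that the only surviving sub-case contributes a number of $\Fqt$-points dwarfed by the case-(b) estimate from Lemma~\ref{lem:5.6}. Once this delicate pointwise bound is in place, the Hermitian-polarity partition and the arithmetic comparison with $3(q^5+q^2)+q^3+1$ are entirely routine.
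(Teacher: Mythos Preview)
Your proposal is correct and follows essentially the same approach as the paper's proof: the same tangent-hyperplane decomposition along the generator $\ell$, the same cone/non-cone dichotomy at each $P\in\ell$ governed by Lemma~\ref{lem:5.6} (equivalently Remark~\ref{remD}), the same elimination of the cone subcases via the no-plane hypothesis and Lemmas~\ref{absirr} and~\ref{ALR}, and the same final numerical bound $2q^5+5q^4-q^3+q^2-3q-4$. The only differences are organizational: you front-load the tangent-line reduction via Proposition~\ref{tanline}, you explicitly justify the disjointness of the sets $V_3\cap T_P(V_3)\setminus\ell$ through Hermitian polarity (a point the paper leaves implicit when writing the equality after its formula~\eqref{formula}), and you absorb the cone case into a single uniform pointwise bound rather than tracking it separately via the set $L$ as the paper does.
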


\begin{proof}
   Suppose $\C$ contains a generator $\ell$ of $V_3$. Now, as discussed in Remark \ref{remD}, for every $P \in \ell$, either the cubic surface $\C \cap T_P(V_3)$ is  a cone over a plane cubic curve $\C_P'$ with center at $P$ or $\C \cap T_P(V_3)$ contains at most $6$ lines passing through $P$. 

   Let $L = \{ P \in \ell : \C \cap T_P(V_3) \ \text{is a cone over a plane cubic} \ \C'_P \ \text{with center at} \ P \}.$ 
   Observe that for each $P \in L$, if $\C'_P$ is reducible, then $\C'_P$ contains a line defined over $\Fqt$, and as a consequence, $\C$ contains a plane defined over $\Fqt$. We may thus assume that for each $P \in L$, the plane curve $\C'_P$ is irreducible. If $\C'_P$ is absolutely irreducible, then it follows from Lemma \ref{absirr},  that $\C$ contains a line that is tangent to $V_3$. Here, Proposition \ref{tanline} applies, and we obtain the assertion. We may thus assume from now that for every $P \in L$, the plane curve $\C'_P$ is irreducible but not absolutely irreducible. It now follows from Lemma \ref{ALR} (b), that for each $P \in L$, we have $T_P(V_3) \cap \C \cap V_3 = \ell.$ Consequently, we have the following:
   \begin{equation} \label{formula}
    |V_3 \cap T_P(V_3) \cap (\C \setminus \ell)| = 
    \begin{cases}
        0 \ \ &\text{if} \ P \in L \\
        \le 2q^3 + 5q^2 - 3q -5 \ \ &\text{if} \ P \in \ell \setminus L.
    \end{cases}
    \end{equation}
   The last inequality follows from Lemma \ref{lem:5.6}. Since
   $\PP^4 = \displaystyle{\bigcup_{P \in \ell}} T_P (V_3)$, we have
   \begin{align*}
     |C(\Fqt) \cap V_3| & = |\ell| + \sum_{P \in L}  |V_3 \cap T_P(V_3) \cap (\C \setminus \ell)| + \sum_{P \in \ell \setminus L}|V_3 \cap T_P(V_3) \cap (\C \setminus \ell)| \\
     &\le (q^2 + 1) + |L| \cdot 0  + (q^2 + 1 - |L|) (2q^3 + 5q^2 - 3q -5) \\
     &= q^2 + 1 + 2q^5 + 5q^4 - q^3 - 3q - 5 - |L|(2q^3 + 5q^2 - 3q -5) \\
     &\le 2q^5 + 5q^4 - q^3 + q^2 - 3q - 4 \\
     &< 3(q^5 + q^2) + q^3 + 1.
   \end{align*}
 The last inequality is true since $q \ge 7$, as per hypothesis. This completes the proof.  
\end{proof}

\begin{theorem}
    Let $F \in \Fqt [x_0, \dots, x_4]$ be a homogeneous polynomial of degree three and $V_3$ be a non-degenerate Hermitian threefold in $\PP^4 (\Fqt)$. If $q\ge7$, then 
$$|V(F)(\Fqt) \cap V_3| \le 3 (q^5 + q^2) + q^3 + 1.$$
Moreover, this bound is attained by a cubic threefold $V(F)$ in $\PP^4$, if and only if $V(F)$ is a union of three non-tangent hyperplanes $\Sigma_1, \Sigma_2$ and $\Sigma_3$ defined over $\Fqt$, such that each of the hyperplanes contains a common plane $\Pi$ defined over $\Fqt$ and $\Pi$ intersects $V_3$ at a non-degenerate plane Hermitian curve.
\end{theorem}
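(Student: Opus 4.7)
The plan is to exhaust all structural possibilities for a cubic threefold $\mathcal{C} = V(F) \subset \PP^4$ defined over $\Fqt$ and to feed each one into a result already proved in the paper. I would organise the case split first by whether $\mathcal{C}$ contains a hyperplane of $\PP^4$ defined over $\Fqt$, and, if not, by the successive presence of a plane, a generator of $V_3$, and the maximal number of generators of $V_3$ contained in a plane section $\mathcal{C}\cap\Pi$. Every branch other than the "contains a hyperplane" branch will yield a strict inequality, and this is exactly what pinpoints the equality case.

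Concretely, if $\mathcal{C}$ contains a hyperplane defined over $\Fqt$, Proposition \ref{reducible} delivers the bound $3(q^5+q^2)+q^3+1$. Otherwise: if $\mathcal{C}$ contains a plane over $\Fqt$, Proposition \ref{plane} applies with $d=3$; if $\mathcal{C}$ contains no plane over $\Fqt$ and no generator of $V_3$, Proposition \ref{nogen} applies; and if $\mathcal{C}$ contains a generator of $V_3$ but no plane over $\Fqt$, I would split once more by the maximum number of generators lying in a plane section of $\mathcal{C}$. If some plane section contains three generators, Proposition \ref{dgen} applies with $d=3$; if every plane section contains at most one generator, Proposition \ref{atmost1gen} applies; and in the remaining case, where some plane section contains exactly two generators but none contains three, Proposition \ref{casetwo} applies. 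It is in this last sub-case that the hypothesis $q\ge 7$ is consumed.

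For the "moreover" part, I would trace equalities through the four sub-cases inside the proof of Proposition \ref{reducible}. Cases 2, 3, and 4 there are strict, so equality forces the quadric residue $Q$ (with $\mathcal{C} = \Sigma \cup Q$) to be of Type I, i.e.\ $Q = \Sigma_1\cup\Sigma_2$ with $\Sigma_1,\Sigma_2$ non-tangent to $V_3$ and $\Sigma_1\cap\Sigma_2$ meeting $V_3$ in a non-singular plane Hermitian curve. Chasing equalities in Case 1 further requires $|\Sigma\cap V_3|=q^5+q^3+q^2+1$, so $\Sigma$ is non-tangent to $V_3$, together with $|(\Sigma_1\cup\Sigma_2)\cap\Sigma\cap V_3|=q^3+1$. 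The latter, combined with Lemma \ref{ed2} applied to each of the planes $\Sigma_1\cap\Sigma$ and $\Sigma_2\cap\Sigma$, forces
\[ \Sigma_1\cap\Sigma \;=\; \Sigma_2\cap\Sigma \;=\; \Sigma_1\cap\Sigma_2 \;=:\; \Pi, \]
so the three hyperplanes share a common plane $\Pi$ that meets $V_3$ in a non-degenerate plane Hermitian curve. A short inclusion--exclusion,
\[ 3(q^5+q^3+q^2+1)-3(q^3+1)+(q^3+1)\;=\;3(q^5+q^2)+q^3+1, \]
then confirms that the prescribed configuration indeed attains the bound, giving the "if and only if".

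The main obstacle is the sub-case covered by Proposition \ref{casetwo}. There one must track, as $P$ varies along a fixed generator $\ell\subset\mathcal{C}$, the possible shapes of the cubic surface $\mathcal{C}\cap T_P(V_3)$ via Theorem \ref{D} and Remark \ref{remD}. The delicate point is isolating those $P$ for which $\mathcal{C}\cap T_P(V_3)$ is a cone over a plane cubic: an absolutely irreducible apex forces a tangent line inside $\mathcal{C}$ and triggers Proposition \ref{tanline} (via Lemmas \ref{absirr} and \ref{ALR}(a)), while a non-absolutely-irreducible apex contributes essentially no $\Fqt$-rational points outside $\ell$ by Lemma \ref{ALR}(b) and Proposition \ref{lac1}. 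Balancing these two behaviours against Lemma \ref{lem:5.6}'s estimate $2q^3+6q^2-3q-4$ on the non-cone points is precisely where the arithmetic threshold $q\ge 7$ is required, and this is where the most careful bookkeeping of the whole argument takes place.
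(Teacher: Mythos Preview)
Your proposal is correct and follows essentially the same case decomposition as the paper's proof, with the minor difference that you insert an extra sub-case invoking Proposition \ref{atmost1gen}, which is harmless but redundant since Proposition \ref{casetwo} already covers the ``at most two generators per plane section'' hypothesis and hence subsumes the ``at most one'' situation. Your equality-case analysis is more explicit than the paper's (which simply cites Remark \ref{attained}), and your argument forcing $\Sigma_1\cap\Sigma=\Sigma_2\cap\Sigma=\Sigma_1\cap\Sigma_2$ from the constraint $|(\Sigma_1\cup\Sigma_2)\cap\Sigma\cap V_3|=q^3+1$ is correct.
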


\begin{proof}
Throughout, we will denote by $\C$ the threefold given by the vanishing locus of $F$, that is $\C = V(F)$. We distinguish the proof into two cases. 

 {\bf Case 1:} \textit{$\C$ is reducible over $\Fqt$}. In this case, the threefold $\C$ contains a hyperplane defined over $\Fqt$. From Proposition \ref{reducible} we see that $|V(F)(\Fqt)\cap V_3|\le 3(q^5+q^2)+q^3+1$. Also it follows from Remark \ref{attained} that the bound is attained in this case if and only if $V(F)$ is a union of three non-tangent hyperplanes $\Sigma_1, \Sigma_2$ and $\Sigma_3$ defined over $\Fqt$, such that each of the hyperplanes contains a common plane $\Pi$ defined over $\Fqt$ and $\Pi$ intersects $V_3$ at a non-degenerate plane Hermitian curve.

 {\bf Case 2:} \textit{$\C$ is irreducible over $\Fqt$.} In this case, $\C$ does not contain any hyperplane defined over $\Fqt$. If $\C$ does not contain any generator of $V_3$, then Proposition \ref{nogen} shows that $|\C(\Fqt)\cap V_3|<3(q^5+q^2)+q^3+1$. If $V(F)$ contains a plane defined over $\Fqt$, then Proposition \ref{plane} applies and shows that $|\C(\Fqt)\cap V_3|<3(q^5+q^2)+q^3+1$. So, we may assume that $\C$ contains a generator of $V_3$ but no plane defined over $\Fqt$. It follows that for any plane $\Pi$ defined over $\Fqt$, the plane curve $V(F)\cap \Pi$ can contain at most three generators of $V_3$. If there exists a plane $\Pi_0$ in $\PP^4$ defined over $\Fqt$ such that $V(F)\cap \Pi_0$ has exactly three generators of $V_3$, then Proposition \ref{dgen} implies that $|V(F)(\Fqt)\cap V_3|<3(q^5+q^2)+q^3+1$. Furthermore, if for any plane $\Pi$ defined over $\Fqt$, the intersection $V(F)\cap \Pi$ contains at most two generators of $V_3$, then Proposition \ref{casetwo} shows that $|V(F)(\Fqt)\cap V_3|<3(q^5+q^2)+q^3+1$. Thus, we see that a strict bound always holds in this case. This completes the proof.
\end{proof}

\textbf{Acknowledgements:} We are grateful to the anonymous referees for their important suggestions in making the article better than the previous version.

\end{document}